\numberwithin{equation}{section}
\newtheorem{theorem}{Theorem}[section]
\newtheorem{lemma}[theorem]{Lemma}
\newtheorem{propos}[theorem]{Proposition}
\theoremstyle{definition}
\newtheorem{definition}[theorem]{Definition}
\newtheorem{proof}{Proof}
\let\origendproof\endproof
\def\endproof{\unskip\nobreak\hskip5pt plus 1fill$\square$\origendproof}
\newtheorem{sled}[theorem]{Corollary}
\newtheorem{ex}[theorem]{Example}
\newtheorem{rem}[theorem]{Remark}
\def\Gal{\mathop{\rm Gal}\nolimits}
\def\PGL{\mathop{\rm PGL}\nolimits}
\def\GL{\mathop{\rm GL}\nolimits}
\def\Aut{\mathop{\rm Aut}\nolimits}
\def\rk{\mathop{\rm rk}\nolimits}
\def\tr{\mathop{\rm tr}\nolimits}
\def\det{\mathop{\rm det}\nolimits}
\def\Pic{\mathop{\rm Pic}\nolimits}
\begin{document}
\title{Automorphisms of two-dimensional quadrics}
\author{A.\,V.~Zaitsev}
\address{National research university ''Higher school of economics'', Laboratory of algebraic geometry, 6 Usacheva str., Moscow, 119048, Russia}
\email{\href{alvlzaitsev1@gmail.com}{alvlzaitsev1@gmail.com}}
\thanks{The study has been funded within the framework of the HSE University Basic Research Program.} 

\maketitle
\medskip


\begin{abstract}
In this paper, we find the maximum values that the Jordan constant of the automorphism group of a smooth two-dimensional rational quadric over a field of characteristic zero can attain, depending on the arithmetic properties of a field.
\end{abstract}

\tableofcontents

\section{Introduction}
It is often useful to study infinite groups at the level of their finite subgroups. For example, one can study the Jordan property of infinite groups.

\begin{definition}[{\cite[Definition 2.1]{Pop}}]\label{def: J.} Let $G$ be a finite group. The Jordan constant $J(G)$ of $G$ is the smallest index of a normal abelian subgroup in $G$. Let $\Gamma$ be an arbitrary group. Then $\Gamma$ is called \textit{Jordan} if the value $$ J(\Gamma) = \sup\limits_{G\subseteq \Gamma,\ |G|<\infty }(J(G))$$ is finite. In this case the number $J(\Gamma)$ is called the Jordan constant of the group $\Gamma$.
\end{definition}
Important examples of infinite Jordan groups are the complete linear groups $\text{GL}_n(K)$ over a field $K$ of characteristic zero. The fact that these groups are Jordan was proved by Camille Jordan, see~\cite[\S 40]{Jord} or~\cite[Theorem 36.13]{CR}, and their Jordan constants over algebraically closed fields were computed in~\cite{Coll}. As a corollary, all linear algebraic groups are Jordan groups. In particular, the projective linear groups $\PGL_n(K)$, which are automorphism groups of projective spaces.

The next natural question is about the group of birational automorphisms of the projective plane --- is this group Jordan or not? The fact that this group is Jordan over fields of characteristic zero was proved in the paper~\cite[Theor\'em\`e 3.1]{Serre}. The situation with it's Jordan constants is more complicated for these groups than for linear groups. At the moment, the exact values of the Jordan constants of the group of birational automorphisms of the projective plane have been computed over algebraically closed fields of characteristic zero, over fields of real and rational numbers, see~\cite{Yas}.

In this paper, we deal with one of the most important steps of a question about Jordan constants of the group of birational automorphisms of the projective plane over fields of characteristic zero. Namely, finite subgroups in the group of birational automorphisms of the projective plane act effectively on rational del Pezzo surfaces or on rational surfaces with a conic bundle structure. Therefore, it is useful to understand the Jordan constants of the automorphism groups of these surfaces. The results about the rational del Pezzo surface of degree 9, that is, about the projective plane, are obtained in~\cite{Hu}. In this paper we compute the Jordan constants of automorphism groups of rational del Pezzo surfaces of degree $8$, that is, smooth two-dimensional rational quadrics. Recall that a smooth two-dimensional quadric is rational if and only if it contains a rational point.

We are interested in the following value: $$ M(K) = \max\limits_{X}(J(\Aut(X)),$$
where the maximum is taken over smooth rational quadrics in $\mathbb{P}^3_K$. As a result, we prove the following theorem.

\begin{theorem}\label{theo:M(K)}
Let $K$ be a field of characteristic $0$.
\begin{enumerate}
\item $M(K) = 7200$ if and only if $\sqrt{5} \in K$, and $-1$ is a sum of two squares in $K$;
\item $M(K) = 120$ if and only if $\sqrt{5} \notin K$, and $-1$ is a sum of two squares in $K(\sqrt{5})$;
\item $M(K) = 60$ if and only if $\sqrt{5} \in K$, and $-1$ is not a sum of two squares in $K$; 
\item $M(K) = 8$ if and only if $\sqrt{5} \notin K$, and $-1$ is not a sum of two squares in $K(\sqrt{5})$.
\end{enumerate}

\end{theorem}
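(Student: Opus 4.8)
The plan is to reduce the computation of $M(K)$ to the classification of finite subgroups of $\PGL_2$ over $K$ and over its quadratic extensions, combined with a careful analysis of the two geometric types of rational quadric. First I would recall that a smooth rational quadric $X \subset \mathbb{P}^3_K$ is determined up to isomorphism by the Galois action on the two rulings of $X_{\bar K} \cong \mathbb{P}^1 \times \mathbb{P}^1$. Either both rulings are defined over $K$, in which case $X \cong \mathbb{P}^1_K \times \mathbb{P}^1_K$ and $\Aut(X) \cong (\PGL_2(K) \times \PGL_2(K)) \rtimes \mathbb{Z}/2\mathbb{Z}$ with the factor $\mathbb{Z}/2\mathbb{Z}$ interchanging the rulings; or the rulings are interchanged by the Galois action of a quadratic extension $L = K(\sqrt d)$, in which case I would show that the connected automorphisms form a group isomorphic to $\PGL_2(L)$ and that the component group is at most $\mathbb{Z}/2\mathbb{Z}$. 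The key structural computation is that the nontrivial component, when present, acts on $\PGL_2(L)$ by the Galois involution $\sigma$ of $L/K$ up to an inner automorphism; I expect verifying this description of $\Aut(X)$ and the action of the component group to be one of the two main technical points.

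Next I would assemble the group-theoretic input. By the classical description, the finite subgroups of $\PGL_2(F)$ over a field $F$ of characteristic zero are cyclic, dihedral, $A_4$, $S_4$ or $A_5$, and I would record the arithmetic realizability conditions: the Klein four-group (hence each of $A_4$, $S_4$, $A_5$) embeds into $\PGL_2(F)$ only if $-1$ is a sum of two squares in $F$, equivalently the quaternion algebra $(-1,-1)_F$ splits, while $A_5$ embeds if and only if in addition $\sqrt 5 \in F$. I would then compute the Jordan constants that can occur: $J(A_5) = 60$, $J(A_5 \times A_5) = 3600$, $J(A_5 \wr \mathbb{Z}/2\mathbb{Z}) = 7200$, $J(S_5) = 120$, $J(S_4) = 6$, and $J(D_n \wr \mathbb{Z}/2\mathbb{Z}) = 8$ for $n \geq 3$, using that a normal abelian subgroup of each factor, being preserved by the swap, yields the clean bound $J(G \wr \mathbb{Z}/2\mathbb{Z}) \leq 2\,J(G)^2$ for the full wreath product, and that arbitrary finite subgroups are handled by the same idea via their projections into the two $\PGL_2$-factors. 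The decisive point is the behaviour of $A_5 \subset \PGL_2(L)$ under the component-group generator: since the field of character values of $A_5$ is $\mathbb{Q}(\sqrt 5)$ and its two classes of elements of order $5$ are interchanged exactly by $\sqrt 5 \mapsto -\sqrt 5$, the involution $\sigma$ induces the outer automorphism of $A_5$ when $\sqrt 5 \notin K$ (producing $S_5$, with $J = 120$) and an inner automorphism when $\sqrt 5 \in K$ (producing $A_5 \times \mathbb{Z}/2\mathbb{Z}$, with $J = 60$).

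With these ingredients I would treat the four cases by exhibiting an optimal quadric and bounding the rest. If $\sqrt 5 \in K$ and $-1$ is a sum of two squares in $K$, then $A_5 \subset \PGL_2(K)$ and the split quadric realizes $A_5 \wr \mathbb{Z}/2\mathbb{Z}$, giving $7200$; the wreath bound shows nothing larger occurs, and the non-split quadrics contribute at most $120$. If $\sqrt 5 \notin K$ but $-1$ is a sum of two squares in $K(\sqrt 5)$, then $A_5$ appears only over $L = K(\sqrt 5)$, so only the non-split quadric splitting over $K(\sqrt 5)$ realizes it, and the outer action yields $S_5$ with $J = 120$, while the split quadric contributes at most $J(S_4 \wr \mathbb{Z}/2\mathbb{Z}) = 72$. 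If $\sqrt 5 \in K$ but $-1$ is not a sum of two squares in $K$, then $A_5$ cannot embed into $\PGL_2(K)$, but it embeds into $\PGL_2(K(\sqrt{-1}))$, since $-1$ is a square and hence a sum of two squares there, and is realized on the non-split quadric splitting over $K(\sqrt{-1})$; as $\sqrt 5 \in K$ the component generator acts innerly and gives $A_5 \times \mathbb{Z}/2\mathbb{Z}$ with $J = 60$, the split quadric contributing at most $8$. Finally, if $\sqrt 5 \notin K$ and $-1$ is not a sum of two squares in $K(\sqrt 5)$, I would check that $A_5$ embeds into no $\PGL_2(K(\sqrt d))$: any such field containing $\sqrt 5$ equals $K(\sqrt 5)$, where $-1$ is not a sum of two squares by hypothesis (which in particular forces $\sqrt{-5} \notin K$). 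Hence no automorphism group contains $A_5$, nor $S_4$ over $K$, and the maximum $8$ is attained by $D_n \wr \mathbb{Z}/2\mathbb{Z}$ on the split quadric.

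The main obstacle, beyond establishing the precise form of $\Aut(X)$ for the non-split quadrics, is proving the upper bounds uniformly: one must show that across all rational quadrics and all their finite subgroups no Jordan constant exceeds the claimed $M(K)$. This requires the complete list of finite subgroups of each $\Aut(X)$ together with the verification that extensions by the component group never raise the Jordan constant beyond what the inner/outer dichotomy predicts — in particular that $\mathrm{Out}(S_4) = 1$ prevents $S_4$ from producing anything above $6$, and that the arithmetic hypothesis of each case genuinely forbids $A_5$ over every relevant quadratic extension.
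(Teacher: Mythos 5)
Your architecture coincides with the paper's: the reduction of $M(K)$ to $\max\bigl(J(\Aut(\mathbb{P}^1_K\times\mathbb{P}^1_K)),\ \max_{[L:K]=2}J(\PGL_2(L)\rtimes\mathbb{Z}/2\mathbb{Z})\bigr)$ is exactly Corollary~\ref{sled: P1 x P1 or R} (the paper obtains the non-split case as a Weil restriction $R_{L/K}(\mathbb{P}^1_L)$, with the component group acting precisely by the Galois involution, not merely up to inner automorphism); your wreath-product formula $2J(\Gamma)^2$ is Lemma~\ref{lemma:Gamma X Gamma X Z/2Z}; the uniform bound $120$ via abelian subgroups that are characteristic in $G\cap\PGL_2(L)$ is Lemma~\ref{lemma: |G| < 120}; the use of completeness of $\mathfrak{S}_4$ to keep extensions at $6$ is Lemma~\ref{lemma: sqrt(5) - a,b -} via Corollary~\ref{sled: ext of S_n}; and your inner/outer dichotomy for $\mathfrak{A}_5$ detected by the trace field $\mathbb{Q}(\sqrt5)$ is exactly the mechanism of Lemmas~\ref{lemma: tr/det} and~\ref{lemma: J = 60 for every quadratic extension} (the invariant $\tr^2(B)/\det(B)=\tfrac32\pm\tfrac12\sqrt5$ of order-$5$ elements is Galois-fixed when $\sqrt5\in K$, so the induced automorphism preserves the two classes of $5$-cycles and is inner, excluding $\mathfrak{S}_5$). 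One factual slip: the Klein four-group embeds into $\PGL_2(F)$ for \emph{every} $F$ (take the classes of $\mathrm{diag}(1,-1)$ and the antidiagonal involution), so your parenthetical derivation of the conditions for $\mathfrak{A}_4$, $\mathfrak{S}_4$, $\mathfrak{A}_5$ from the Klein subgroup is wrong; the conditions themselves are Beauville's (Proposition~\ref{prop:Beauville}) and correct, and nothing downstream depends on the faulty reasoning.

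The genuine gap is the lower bound in case (2): ``the outer action yields $S_5$'' does not by itself produce a \emph{finite} subgroup isomorphic to $\mathfrak{S}_5$ in $\PGL_2(K(\sqrt5))\rtimes\mathbb{Z}/2\mathbb{Z}$. The Galois involution $\sigma$ need not stabilize a chosen copy $G'\simeq\mathfrak{A}_5$; to build the extension you must find $R\in\PGL_2(K(\sqrt5))$ with $R\,\sigma(G')\,R^{-1}=G'$ (conjugacy of $\sigma(G')$ to $G'$ inside $\PGL_2(L)$, not just over $\overline{K}$, requires justification) \emph{and} such that $(R,1)^2=(R\,\sigma(R),0)$ lies in $(G',0)$ --- a cocycle-type finiteness condition that does not follow from knowing the induced automorphism is outer. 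This is precisely where the paper does real work: Proposition~\ref{prop:A_5 action} realizes $\mathfrak{A}_5$ by explicit matrices $A$, $C$ built from a solution of $(a+b\sqrt5)^2+(c+d\sqrt5)^2=-1$, and Lemma~\ref{lemma: S_5 action} exhibits the explicit matrix $R$ with rows $(a+c,\ a-c)$ and $(a-c,\ -a-c)$, checks $(R,1)^2=e$ and the conjugation formulas $(R,1)(A,0)(R,1)=(A,0)$, $(R,1)(C,0)(R,1)=((C^2A)^3,0)$, and then concludes $\mathfrak{S}_5$ via Proposition~\ref{prop: A_n x Z2 or S_n}. Without such a construction (or a descent argument replacing it) your case (2) establishes only $72\leqslant M(K)\leqslant 120$. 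The remainder of your four-case analysis, including the observation that in case (3) the bound $60$ is already attained by $\mathfrak{A}_5\subset\PGL_2(K(\sqrt{-1}))$ without any extension, matches the paper's Propositions~\ref{prop: M(K) = 7200, (i), (ii)}--\ref{prop: M(K) = 8, not (i), not (iii)}.
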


\begin{sled}\label{sled: M(Q), M(R), M(C)...} All values of $M(K)$ from Theorem~\ref{theo:M(K)} are attained:
$$M(\mathbb{Q}) = 8, \; M(\mathbb{R}) = 60,\;   M(\mathbb{Q}(i)) = 120, \; M(\mathbb{Q}(\sqrt{-7})) = 120,\; M(\mathbb{C}) = 7200.$$
\end{sled}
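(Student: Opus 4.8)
The plan is to deduce the statement directly from Theorem~\ref{theo:M(K)} by determining, for each of the five fields, which of the four mutually exclusive cases it falls into. For a field $K$ this amounts to evaluating two arithmetic invariants: whether $\sqrt5\in K$, and whether $-1$ is a sum of two squares in the relevant field ($K$ itself for the conditions in (1) and (3), and $K(\sqrt5)$ for those in (2) and (4)). Once these are known, the value of $M(K)$ is read off from the theorem, so the proof reduces to five short arithmetic verifications.

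Four of the five fields are immediate. For $\mathbb{C}$ we have $\sqrt5\in\mathbb{C}$ and $-1=i^2$ is a sum of two squares, so case (1) applies and $M(\mathbb{C})=7200$. For $\mathbb{R}$ we have $\sqrt5\in\mathbb{R}$, while $\mathbb{R}$ is formally real and hence $-1$ is not a sum of squares, so case (3) gives $M(\mathbb{R})=60$. For $\mathbb{Q}$ we have $\sqrt5\notin\mathbb{Q}$, and $\mathbb{Q}(\sqrt5)\subset\mathbb{R}$ is again formally real, so $-1$ is not a sum of two squares there; thus case (4) gives $M(\mathbb{Q})=8$. For $\mathbb{Q}(i)$ the real number $\sqrt5$ is irrational and $\mathbb{Q}(i)\cap\mathbb{R}=\mathbb{Q}$, so $\sqrt5\notin\mathbb{Q}(i)$, while $-1=i^2$ is already a square in $\mathbb{Q}(i)\subseteq\mathbb{Q}(i,\sqrt5)$; hence case (2) gives $M(\mathbb{Q}(i))=120$.

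The only case requiring real work is $\mathbb{Q}(\sqrt{-7})$. Since $\mathbb{Q}(\sqrt{-7})$ is imaginary quadratic, its only real elements are rational, so $\sqrt5\notin\mathbb{Q}(\sqrt{-7})$; it remains to show that $-1$ is a sum of two squares in $L=\mathbb{Q}(\sqrt{-7},\sqrt5)$, which would place the field in case (2) and yield $120$. I would argue via Hasse--Minkowski: $-1$ is a sum of two squares in $L$ if and only if the ternary form $\langle1,1,1\rangle$ is isotropic over $L$, equivalently isotropic at every completion of $L$. Isotropy is automatic at the archimedean places, since $L$ is totally imaginary, and at every finite place above an odd prime. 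The only delicate places are those above $2$: here I would note that $2$ splits in $\mathbb{Q}(\sqrt{-7})$ (because $-7\equiv1\pmod 8$) but is inert in $\mathbb{Q}(\sqrt5)$ (because $5\equiv5\pmod 8$), so each prime of $L$ over $2$ has completion equal to the unramified quadratic extension of $\mathbb{Q}_2$. Over that extension the quaternion algebra $(-1,-1)$ splits, since its local invariant $\tfrac12$ doubles to $0$, so $-1$ is a sum of two squares locally and $\langle1,1,1\rangle$ is isotropic there.

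The expected main obstacle is precisely this last point. Indeed, $-1$ is \emph{not} a sum of two squares in $\mathbb{Q}(\sqrt{-7})$ itself: its completion at the prime over $2$ is $\mathbb{Q}_2$, over which $\langle1,1,1\rangle$ is anisotropic. Thus the passage to $\mathbb{Q}(\sqrt{-7},\sqrt5)$ is genuinely necessary, and the heart of the verification is the observation that adjoining $\sqrt5$ turns the troublesome $2$-adic place into an unramified quadratic extension, on which the obstruction from the Hamilton quaternion algebra vanishes. Collecting the five computations then reproduces the asserted values of $M(K)$.
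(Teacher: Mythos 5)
Your proposal is correct, and for four of the five fields it coincides with the paper's proof, which likewise just reads the values off Theorem~\ref{theo:M(K)} after the same immediate checks (e.g.\ $-1=i^2+0^2$ for $\mathbb{Q}(i)$). The genuine difference is in the one nontrivial verification, that $-1$ is a sum of two squares in $\mathbb{Q}(\sqrt{-7},\sqrt{5})$. The paper settles this by exhibiting an explicit witness,
$$\Bigl(\frac{\sqrt{-7}+\sqrt{5}}{-1+\sqrt{-35}}\Bigr)^2+\Bigl(\frac{6}{-1+\sqrt{-35}}\Bigr)^2=-1,$$
which is checked in one line: the left side has numerator $(\sqrt{-7}+\sqrt{5})^2+36=34+2\sqrt{-35}$ and denominator $(-1+\sqrt{-35})^2=-34-2\sqrt{-35}$. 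You instead argue via the Hasse--Minkowski theorem, and your local analysis is accurate: isotropy of $\langle 1,1,1\rangle$ is automatic at the complex archimedean places and at places of odd residue characteristic, while at the places over $2$ the splitting of $2$ in $\mathbb{Q}(\sqrt{-7})$ (as $-7\equiv 1\pmod 8$) and the inertness of $2$ in $\mathbb{Q}(\sqrt{5})$ (as $5\equiv 5\pmod 8$) make the completion the unramified quadratic extension $\mathbb{Q}_2(\sqrt{5})$, where the invariant $\tfrac12$ of the Hamilton quaternion algebra doubles to $0$. The trade-off: the paper's identity is elementary, self-contained, and verifiable by hand, but opaque as to provenance; your argument invokes heavier machinery (local--global for ternary forms over number fields, quaternion invariants) yet explains \emph{why} adjoining $\sqrt{5}$ removes the $2$-adic obstruction --- your side remark that $-1$ is not a sum of two squares in $\mathbb{Q}(\sqrt{-7})$ itself, because its completion over $2$ is $\mathbb{Q}_2$, makes this precise --- and it gives a method that decides the relevant condition of Theorem~\ref{theo:M(K)} for any number field, not just this example.
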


During the proof of Theorem~\ref{theo:M(K)}, we compute the Jordan constant of a surface $\mathbb{P}^1_K \times \mathbb{P}^1_K$, more precisely, we prove the following theorem.


\begin{theorem}\label{theo:theo P^1 x P^1}
Let $K$ be a field of characteristic $0$.
\begin{enumerate}
    \item $J\left(\Aut\left(\mathbb{P}^1_K \times \mathbb{P}^1_K\right)\right) = 7200$ if and only if \mbox{$\sqrt{5} \in K$} and $-1$ is a sum of two squares in $K$;
    \item $J(\Aut(\mathbb{P}^1_K \times \mathbb{P}^1_K)) = 72$ if and only if \mbox{$\sqrt{5} \not\in K$}, and $-1$ is a sum of two squares in~$K$;
    \item $J(\Aut(\mathbb{P}^1_K \times \mathbb{P}^1_K)) = 8$ if and only if $-1$ is not a sum of two squares in $K$.

\end{enumerate}
\end{theorem}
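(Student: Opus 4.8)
The plan is to reduce everything to $\PGL_2(K)$ via the classical isomorphism
\[
\Aut\bigl(\mathbb{P}^1_K \times \mathbb{P}^1_K\bigr) \cong \bigl(\PGL_2(K) \times \PGL_2(K)\bigr) \rtimes \mathbb{Z}/2\mathbb{Z},
\]
in which the nontrivial element of $\mathbb{Z}/2\mathbb{Z}$ interchanges the two factors. The first step is to collect the classification of the finite subgroups $H \subseteq \PGL_2(K)$ --- cyclic, dihedral, $A_4$, $S_4$, $A_5$ --- together with the arithmetic criteria for their existence over $K$: both $A_4$ and $S_4$ occur if and only if $-1$ is a sum of two squares in $K$, while $A_5$ occurs if and only if in addition $\sqrt 5 \in K$. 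For each type I record the smallest index of a \emph{characteristic} abelian subgroup, which also realizes the Jordan constant: the whole group for cyclic ($J=1$), the rotation subgroup for dihedral ($J=2$), the Klein four-group $V_4$ for $A_4$ ($J=3$) and for $S_4$ ($J=6$), and the trivial subgroup for the simple group $A_5$ ($J=60$). This produces a single invariant $m=m(K)$, the maximal Jordan constant of a finite subgroup of $\PGL_2(K)$, equal to $60$, $6$, or $2$ in the three arithmetic regimes.

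For the upper bound I would prove that every finite $G \subseteq \Aut(\mathbb{P}^1_K \times \mathbb{P}^1_K)$ satisfies $J(G) \le 2\,J(G_1)\,J(G_2) \le 2m^2$, where $G_0 = G \cap (\PGL_2(K) \times \PGL_2(K))$ and $G_1, G_2$ are its two projections. Choosing characteristic abelian subgroups $A_i \trianglelefteq G_i$ of index $J(G_i)$, the product $A_1 \times A_2$ is invariant under every automorphism of $G_1 \times G_2$; when the swap lies in $G$ it forces $G_1 = G_2$ and normalizes each $G_i$, so $A_1 \times A_2$ stays invariant, whence $A := G \cap (A_1 \times A_2)$ is an abelian normal subgroup of $G$. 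A second-isomorphism-theorem count bounds $[G : A]$ by $[G:G_0] \cdot [G_1\times G_2 : A_1 \times A_2] \le 2\,J(G_1)\,J(G_2)$.

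For the matching lower bound I would exhibit, in each regime, the wreath product $H \wr \mathbb{Z}/2\mathbb{Z} = (H \times H) \rtimes \mathbb{Z}/2\mathbb{Z}$ for a finite $H \subseteq \PGL_2(K)$ attaining $J(H) = m$ --- namely $A_5$, $S_4$, or the always-available $S_3$ --- and compute $J\bigl(H \wr \mathbb{Z}/2\mathbb{Z}\bigr) = 2\,J(H)^2$ directly. Here any abelian normal subgroup meets $H \times H$ in a product of abelian normal subgroups of $H$, and no element projecting nontrivially to $\mathbb{Z}/2\mathbb{Z}$ can centralize such a product; this pins the largest abelian normal subgroup to $A \times A$ with $A \trianglelefteq H$ of index $J(H)$, of index $2\,J(H)^2$. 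Substituting $m = 60, 6, 2$ gives the three values $7200 = 2\cdot 60^2$, $72 = 2\cdot 6^2$, and $8 = 2\cdot 2^2$, and since the swapless subgroups only give $J \le m^2 < 2m^2$, the supremum is attained exactly at these wreath products.

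The hard part will be the arithmetic embedding criteria, above all the icosahedral case: showing that $A_5 \hookrightarrow \PGL_2(K)$ exactly when $\sqrt 5 \in K$ and $-1$ is a sum of two squares. The order-$5$ elements force $\zeta_5 + \zeta_5^{-1}$, hence $\sqrt 5$, into $K$, while the obstruction to descending the icosahedral projective representation from $\overline{K}$ to $K$ is measured by the splitting of the quaternion algebra $(-1,-1)$, that is, by whether $-1$ is a sum of two squares; one either invokes the known classification of finite subgroups of $\PGL_2$ over an arbitrary field or argues by Galois descent. The entirely analogous but easier computation for $S_4$ (no $\sqrt 5$ needed, only the quaternion condition) fixes the value $72$, while the failure of even $A_4$ leaves only cyclic and dihedral groups and hence $m = 2$, giving $8$.
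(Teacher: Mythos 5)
Your overall strategy coincides with the paper's: the same isomorphism $\Aut(\mathbb{P}^1_K\times\mathbb{P}^1_K)\simeq(\PGL_2(K)\times\PGL_2(K))\rtimes\mathbb{Z}/2\mathbb{Z}$, the same input from Beauville's classification (Proposition~\ref{prop:Beauville}, giving $J(\PGL_2(K))=m\in\{60,6,2\}$), the same answer $2m^2$, with the lower bound realized by the wreath products $(H\times H)\rtimes\mathbb{Z}/2\mathbb{Z}$ for $H\simeq\mathfrak{A}_5$, $\mathfrak{S}_4$, $\mathfrak{S}_3$ --- this is exactly the content of Lemmas~\ref{lemma: JxJsemiZ2}--\ref{lemma:Gamma X Gamma X Z/2Z} and Corollary~\ref{sled:J(PGL_2xPGL_2xZ2)}, and your use of characteristic abelian subgroups mirrors Remark~\ref{rem:D2n,A4,S4,A5}. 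However, two of your steps are wrong as written, and one of them suppresses precisely the hardest case. The minor one: an element of $G$ mapping to the nontrivial element of $\mathbb{Z}/2\mathbb{Z}$ is $\gamma=(\gamma_1,\gamma_2,1)$, not the pure swap, and conjugating $G^0$ by it yields only $G_1=\gamma_1 G_2\gamma_1^{-1}$ --- the projections are \emph{conjugate}, not equal; likewise $A_1\times A_2$ need not be invariant under all automorphisms of $G_1\times G_2$. This is repairable inside your own setup, because on Beauville's list the abelian subgroup realizing the Jordan constant is canonical in each isomorphism type (the whole group, the rotation subgroup, the Klein four-group, or the trivial group), so the conjugation isomorphism $G_2\to G_1$ carries $A_2$ onto $A_1$; the paper instead takes $N_2=\gamma_2 N_1\gamma_2^{-1}$ and uses $\gamma^2\in G_1\times G_2$ in the proof of Lemma~\ref{lemma:Gamma X Gamma X Z/2Z}.

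The genuine gap is in your computation of $J\bigl((H\times H)\rtimes\mathbb{Z}/2\mathbb{Z}\bigr)=2J(H)^2$. The claim that ``any abelian normal subgroup meets $H\times H$ in a product of abelian normal subgroups of $H$'' is false exactly in the critical case: if an abelian normal subgroup $N$ contains $g=(g_1,g_2,1)$, commutativity forces every $(h_1,h_2,0)\in N\cap(H\times H)$ to satisfy $h_1=g_1h_2g_1^{-1}$ and $h_2=g_2h_1g_2^{-1}$, so the intersection is a \emph{twisted diagonal}, not a product, and your (correct) observation that no swap-type element centralizes a nontrivial product does not apply to it. For such $N$ one gets $[(H\times H)\rtimes\mathbb{Z}/2\mathbb{Z}:N]=|H|\cdot[H:p_1(N\cap(H\times H))]\geqslant |H|\cdot J(H)$, which is $\geqslant 2J(H)^2$ only when $J(H)\neq|H|$, i.e.\ $|H|\geqslant 2J(H)$. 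For $H\simeq\mathfrak{A}_5$ this fails ($|H|\cdot J(H)=3600<7200$), so the case on which the value $7200$ rests needs a separate argument: there $N\cap(H\times H)$ is trivial, hence $N$ would be normal of order $2$ and therefore central, while no element $(g_1,g_2,1)$ is central. This case split (diagonal subgroups, plus the special treatment of $J(G)=|G|$) is precisely the body of the paper's Lemma~\ref{lemma:GxGsemiZ2}; without it your lower bound, and hence statement (1) of the theorem, is unproved.
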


\begin{ex}\label{C R Q}
All values of Jordan constant from Theorem~\ref{theo:theo P^1 x P^1} are attained:
\begin{itemize} 
    \item $J(\Aut(\mathbb{P}^1_{\mathbb{C}} \times \mathbb{P}^1_{\mathbb{C}})) = 7200$, \item $J(\Aut(\mathbb{P}^1_{\mathbb{Q}(i)} \times \mathbb{P}^1_{\mathbb{Q}(i)})) = 72$, 
    \item $J(\Aut(\mathbb{P}^1_\mathbb{R} \times \mathbb{P}^1_\mathbb{R})) = J(\Aut(\mathbb{P}^1_\mathbb{Q} \times \mathbb{P}^1_\mathbb{Q})) = 8$.
\end{itemize}
\end{ex}

Also we prove the following useful proposition.

\begin{propos}\label{prop: J(AutS) <= 120 if rkPic = 1}
    Let $K$ be a field of characteristic $0$. Let $S$ be a smooth rational quadric in~$\mathbb{P}^3_K$ and $S \not\simeq \mathbb{P}^1_K \times \mathbb{P}^1_K$. Then $$J(\Aut(S)) \leqslant 120.$$
\end{propos}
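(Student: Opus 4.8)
The plan is to make the hypothesis $S \not\simeq \mathbb{P}^1_K \times \mathbb{P}^1_K$ concrete by translating it into the structure of $\Aut(S)$. Over $\overline{K}$ any smooth quadric surface becomes $\mathbb{P}^1 \times \mathbb{P}^1$, so $\Pic(S_{\overline K}) = \mathbb{Z}\ell_1 \oplus \mathbb{Z}\ell_2$ is generated by the two rulings; the assumption $S \not\simeq \mathbb{P}^1_K \times \mathbb{P}^1_K$ (equivalently $\rk \Pic(S) = 1$) means precisely that $\Gal(\overline K / K)$ interchanges $\ell_1$ and $\ell_2$ through a quadratic extension $L/K$. First I would record that every element of $\Aut(S)$ either preserves or swaps the two rulings, which produces a homomorphism $\Aut(S) \to \mathbb{Z}/2$; I denote its kernel by $\Aut^0(S)$, a normal subgroup of index at most $2$. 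Restricting a ruling-preserving automorphism to one factor realizes the finite subgroups of $\Aut^0(S)$ as finite subgroups of $\PGL_2(\overline K)$: over $\overline K$ such an automorphism is a pair $(A,B)$ acting on $\mathbb{P}^1 \times \mathbb{P}^1$, Galois-equivariance forces $B = \sigma(A)$ for the nontrivial $\sigma \in \Gal(L/K)$, and the first projection $(A,\sigma(A)) \mapsto A$ is then injective.

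With this in hand, the argument becomes group-theoretic. By the classical classification, a finite subgroup of $\PGL_2(\overline K)$ is cyclic, dihedral, $A_4$, $S_4$, or $A_5$, and one checks directly that its Jordan constant is at most $6$ in every case except $A_5$, where $J(A_5) = 60$. Now fix a finite subgroup $G \subseteq \Aut(S)$ and set $G_0 = G \cap \Aut^0(S)$, so that $G_0 \trianglelefteq G$ with $[G : G_0] \le 2$. If $[G:G_0] = 1$ then $J(G) = J(G_0) \le 60$. If $[G:G_0] = 2$ and $G_0 \not\cong A_5$, I would use the standard observation that for $g \in G \setminus G_0$ and a normal abelian $A_0 \trianglelefteq G_0$ of index $J(G_0)$, the intersection $A_0 \cap gA_0 g^{-1}$ is a normal abelian subgroup of $G$ of index at most $2\,J(G_0)^2 \le 2\cdot 6^2 = 72$; hence $J(G) \le 72$.

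The only remaining, and genuinely delicate, case is $[G:G_0] = 2$ with $G_0 \cong A_5$, where $|G| = 120$ and the crude intersection bound is far too weak. Here I would argue purely with finite group theory: since $A_5$ is simple with trivial center and $\mathrm{Out}(A_5) = \mathbb{Z}/2$, a group $G$ containing $A_5$ as a normal subgroup of index $2$ is isomorphic either to $S_5$ or to $A_5 \times \mathbb{Z}/2$. A direct inspection of normal abelian subgroups gives $J(S_5) = 120$ (the only normal abelian subgroup is trivial) and $J(A_5 \times \mathbb{Z}/2) = 60$. Combining the three cases yields $J(G) \le 120$ for every finite $G \subseteq \Aut(S)$, and therefore $J(\Aut(S)) \le 120$.

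I expect the structural identification of the finite subgroups of $\Aut^0(S)$ with finite subgroups of $\PGL_2(\overline K)$ and the handling of the $G_0 \cong A_5$ extension to be the two load-bearing steps; the latter is the main obstacle, since it is exactly where the bound $120 = 2\cdot 60$ is attained and where one must rule out any larger Jordan constant by classifying the degree-two overgroups of $A_5$ rather than relying on generic estimates.
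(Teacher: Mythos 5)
Your proof is correct, but the group-theoretic core differs from the paper's. The structural step is the same in substance: the paper invokes its Corollary~\ref{sled: P1 x P1 or R} (via Weil restriction of scalars, $S\simeq R_{L/K}(\mathbb{P}^1_L)$ and $\Aut(S)\simeq\PGL_2(L)\rtimes\mathbb{Z}/2\mathbb{Z}$), while you rederive this by hand from the Galois action on the two rulings --- equivalent content, just not quoted. Where you diverge is in bounding $J(G)$ for a finite $G$ with $G_0 = G\cap\Aut^0(S)$ of index $2$. The paper's Lemma~\ref{lemma: |G| < 120} uses the observation (Remark~\ref{rem:D2n,A4,S4,A5}) that every finite subgroup of $\PGL_2$ attains its Jordan constant on a \emph{characteristic} abelian subgroup $A\subset G_0$; characteristic in normal is normal, so $A\trianglelefteq G$ with $[G:A]\leqslant 2J(G_0)\leqslant 120$ in one uniform stroke, with no case split (in the $\mathfrak{A}_5$ case the characteristic subgroup is trivial). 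You instead split cases: for $G_0\not\simeq\mathfrak{A}_5$ your intersection trick $A_0\cap gA_0g^{-1}$ is valid (one checks $g^2\in G_0$ normalizes $A_0$, so the intersection is $g$-stable) but yields the weaker bound $2J(G_0)^2\leqslant 72$, where the paper gets $2J(G_0)\leqslant 12$; and for $G_0\simeq\mathfrak{A}_5$ you classify the index-two overgroups as $\mathfrak{S}_5$ or $\mathfrak{A}_5\times\mathbb{Z}/2\mathbb{Z}$ (this is exactly the paper's Proposition~\ref{prop: A_n x Z2 or S_n}, and your values $J(\mathfrak{S}_5)=120$, $J(\mathfrak{A}_5\times\mathbb{Z}/2\mathbb{Z})=60$ are right). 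Note, though, that this "delicate" case is not actually load-bearing for the stated bound: when $G_0\simeq\mathfrak{A}_5$ one has $|G|=120$, so $J(G)\leqslant|G|=120$ trivially. The $\mathfrak{S}_5$-versus-$\mathfrak{A}_5\times\mathbb{Z}/2\mathbb{Z}$ dichotomy is genuinely needed in the paper only later, in Lemma~\ref{lemma: J = 60 for every quadratic extension}, where the $\mathfrak{S}_5$ possibility must be excluded to sharpen $120$ to $60$ under extra arithmetic hypotheses. So your route buys a self-contained, classification-based argument at the cost of an avoidable case analysis; the paper's characteristic-subgroup device is the cleaner mechanism for this particular bound.
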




The plan of the paper is as follows. In Section~\ref{sect:prelim} we collect some auxiliary statements from group theory. In Section~\ref{sect: semidirect 1}, we compute the Jordan constants of groups of the form~$(\Gamma \times \Gamma) \rtimes \mathbb{Z}/2\mathbb{Z}$, which are similar to the automorphism group of the surface $\mathbb{P}^1\times\mathbb{P}^1$. In Section~\ref{sect:P1xP1} we prove Theorem~\ref{theo:theo P^1 x P^1}. In Section~\ref{sect: semidirect 2} we find the matrices generating the group~$\mathfrak{A}_5$ inside $\PGL_2(L)$, and using them we estimate the Jordan constants of automorphism groups of smooth rational two-dimensional quadrics different from $\mathbb{P}^1\times\mathbb{P}^1$. Finally, in Section~\ref{sect: main theorem} we prove Theorem~\ref{theo:M(K)}, Proposition~\ref{prop: J(AutS) <= 120 if rkPic = 1} and Corollary~\ref{sled: M(Q), M(R), M(C)...}.

We will use the following notation. We denote the neutral element of a group by $e$. We denote the dihedral group of order~${2n}$ by~$D_{2n}$. We denote the algebraic closure of the field~$K$ by $\overline{K}$. If $K \subset L$ is an extension of fields, and $X$ is a variety over $K$, then we denote the extension of scalars of $X$ to $L$ by $X_L$. If $H\subset G$ are groups and $g, g'\in G$, then we denote a subgroup of $G$ generated by all elements of the subgroup $H$ and the element~$g$ by $\langle H, g\rangle$, and we denote a subgroup generated by elements~$g$ and~$g'$ by $\langle g,g'\rangle$. We denote the group defined by the set of generators $S$ and the list of relations $R$ by~$\langle S\mid R\rangle$.

\textbf{Acknowledgements.} I would like to thank my advisor Constantin Shramov for stating the problem, useful discussions and constant attention to this work. I also want to thank Andrey Trepalin for useful discussions and especially for elegant completion of proof of Lemma~\ref{lemma: J = 60 for every quadratic extension}. The work was supported by the Theoretical Physics and Mathematics Advancement Foundation ``BASIS''.

\section{Jordan constants and group theory}\label{sect:prelim}

In this section, we collect some auxiliary statements from group theory. The following lemma is obvious and will be used without reference to it.
\begin{lemma}\label{lemma:J_1<J_2}
Let $H$ be a subgroup of a Jordan group $G$. Then $H$ is also Jordan and~\mbox{$J(H)\leqslant J(G)$}.
\end{lemma}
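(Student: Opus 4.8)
The plan is to unwind the definition of the Jordan constant directly and reduce the claim to a single elementary observation about suprema over nested families of subgroups. By Definition~\ref{def: J.}, for an arbitrary group $\Gamma$ the Jordan constant is
$$J(\Gamma) = \sup_{F \subseteq \Gamma,\ |F| < \infty} J(F),$$
where $J(F)$, for a finite group $F$, denotes the smallest index of a normal abelian subgroup. The entire argument rests on the remark that, because $H$ is a subgroup of $G$, every finite subgroup $F$ of $H$ is simultaneously a finite subgroup of $G$. Hence the indexing family for the supremum defining $J(H)$ is a subcollection of the indexing family for $J(G)$.

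The steps are then immediate. First I would record the two instances of the definition, one for $H$ and one for $G$. Next I would invoke the inclusion of families of finite subgroups to write
$$J(H) = \sup_{F \subseteq H,\ |F| < \infty} J(F) \leqslant \sup_{F \subseteq G,\ |F| < \infty} J(F) = J(G),$$
using only that a supremum over a subset never exceeds the supremum over the whole set. Finally, since $G$ is assumed to be Jordan, the right-hand side $J(G)$ is finite, so $J(H) \leqslant J(G) < \infty$; this is precisely the assertion that $H$ is Jordan together with the desired inequality. I expect no genuine obstacle: the only point to verify is that the defining supremum for $H$ ranges over a subset of that for $G$, which is nothing more than the inclusion $H \subseteq G$. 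This is why the statement is, as remarked in the text, a formal consequence of the definition and may be used without further comment.
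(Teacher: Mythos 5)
Your argument is correct and is precisely the implicit one: the paper states this lemma without proof as obvious, and your observation that every finite subgroup of $H$ is a finite subgroup of $G$, so the supremum in Definition~\ref{def: J.} for $J(H)$ runs over a subfamily of that for $J(G)$, is exactly the intended justification. Nothing is missing.
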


The following lemma is also standard and simple.

\begin{lemma}[{see for example~\cite[Lemma 2.8]{Pop}}]\label{lemma:J_1 x J_2}
Let $G$ and $H$ be Jordan groups. Then group $G \times H$ is Jordan, and $J(G \times H) = J(G)\cdot J(H)$.
\end{lemma}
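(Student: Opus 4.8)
The plan is to first establish the statement for finite groups and then bootstrap to arbitrary Jordan groups via the supremum definition and the monotonicity Lemma~\ref{lemma:J_1<J_2}. So suppose first that $G$ and $H$ are finite. The inequality $J(G \times H) \leqslant J(G)\cdot J(H)$ is immediate: if $A \trianglelefteq G$ is abelian with $[G:A] = J(G)$ and $B \trianglelefteq H$ is abelian with $[H:B] = J(H)$, then $A \times B$ is a normal abelian subgroup of $G \times H$ of index $J(G)\cdot J(H)$.

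The reverse inequality is the heart of the matter. Let $C \trianglelefteq G \times H$ be any normal abelian subgroup, and consider the two coordinate projections $\pi_G \colon G \times H \to G$ and $\pi_H \colon G \times H \to H$. I would first check that $\pi_G(C)$ and $\pi_H(C)$ are again normal abelian: they are abelian as homomorphic images of the abelian group $C$, and normality follows because for $g \in G$ the relation $(g,e)\,C\,(g,e)^{-1} = C$ projects to $g\,\pi_G(C)\,g^{-1} = \pi_G(C)$, and symmetrically for $H$. Hence $[G : \pi_G(C)] \geqslant J(G)$ and $[H : \pi_H(C)] \geqslant J(H)$. Since $C \subseteq \pi_G(C) \times \pi_H(C)$, I get
$$
[G \times H : C] \;\geqslant\; [G \times H : \pi_G(C) \times \pi_H(C)] \;=\; [G : \pi_G(C)]\cdot[H : \pi_H(C)] \;\geqslant\; J(G)\cdot J(H).
$$
As $C$ was an arbitrary normal abelian subgroup, this yields $J(G \times H) \geqslant J(G)\cdot J(H)$, and together with the previous paragraph the finite case is settled.

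For the general case, note that $J(G), J(H) < \infty$ by hypothesis, and since Jordan constants are positive integers, the suprema defining $J(G)$ and $J(H)$ are attained by some finite subgroups $G_0 \subseteq G$ and $H_0 \subseteq H$. Then $G_0 \times H_0$ is a finite subgroup of $G \times H$, so by the finite case $J(G \times H) \geqslant J(G_0 \times H_0) = J(G_0)\cdot J(H_0) = J(G)\cdot J(H)$. Conversely, for any finite subgroup $F \subseteq G \times H$ its projections $\pi_G(F)$ and $\pi_H(F)$ are finite subgroups of $G$ and $H$, so using $F \subseteq \pi_G(F) \times \pi_H(F)$, Lemma~\ref{lemma:J_1<J_2} and the finite case give $J(F) \leqslant J\bigl(\pi_G(F) \times \pi_H(F)\bigr) = J(\pi_G(F))\cdot J(\pi_H(F)) \leqslant J(G)\cdot J(H)$. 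Taking the supremum over all finite $F$ shows $G \times H$ is Jordan with $J(G \times H) \leqslant J(G)\cdot J(H)$, completing the proof.

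The only genuinely non-formal step is the projection argument for the lower bound in the finite case; everything else is bookkeeping with the supremum and the subgroup monotonicity of $J$. I do not expect any real obstacle here, since the key observation is simply that a normal abelian subgroup of a direct product is trapped inside the product of its projections, each of which is normal abelian in its factor.
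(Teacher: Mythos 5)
Your proof is correct. Note that the paper gives no proof of this lemma at all --- it simply cites \cite[Lemma 2.8]{Pop} --- so there is no in-paper argument to deviate from; your write-up is the standard argument behind that reference. Both halves check out: in the finite case the upper bound via $A\times B$ and the lower bound via the trap $C \subseteq \pi_G(C)\times\pi_H(C)$ (with normality of the projections correctly obtained by conjugating by elements of the form $(g,e)$ and $(e,h)$) are exactly right, and in the general case your two reductions --- attainment of the suprema because a finite supremum of positive integers is a maximum, and the bound $J(F)\leqslant J(\pi_G(F))\cdot J(\pi_H(F))$ via Lemma~\ref{lemma:J_1<J_2} applied to $F\subseteq \pi_G(F)\times\pi_H(F)$ --- are sound, since finite groups are Jordan and the two readings of $J$ for a finite group agree.
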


Recall the standard definition.

\begin{definition}\label{def: char sub}
A subgroup $H$ of a group $G$ is called a \textit{characteristic} subgroup if for every automorphism $\varphi$ of $G$, one has $\varphi(H) = H$.
\end{definition}

The following theorem is useful for estimating Jordan constants of finite groups.

\begin{theorem}[{see for example~\cite[Theorem 1.41]{Isaacs}}]\label{theo: weak jordan constant} 
Let $G$ be a finite group, and $A$ be its abelian subgroup. Then there exists a characteristic abelian subgroup $N$ in $G$ such that $$[G:N]\leqslant[G:A]^2.$$
\end{theorem}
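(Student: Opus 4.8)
The plan is to build $N$ in two stages: first I would exhibit an abelian subgroup of index at most $n:=[G:A]$ with no regard for canonicity, and then upgrade it to a characteristic subgroup, absorbing the extra cost into the second factor of $n$ in the target bound $[G:N]\leqslant n^2$.

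For the first stage I would use a centralizer. Put $C=C_G(A)$. Since $A$ is abelian we have $A\subseteq C$, and because every element of $C$ centralizes $A$ by definition, in fact $A\subseteq Z(C)$. Hence $Z(C)$ is an abelian subgroup of $G$ containing $A$, so that $[G:Z(C)]\leqslant[G:A]=n$. This is the cheap half of the estimate, and it already produces an abelian subgroup of the correct index. Its only defect is that $Z(C)$ is characteristic in $C$, whereas $C$ itself need not be normal in $G$, so $Z(C)$ is in general not characteristic in $G$.

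The main obstacle is therefore the characteristic upgrade together with keeping the sharp exponent, and two tempting routes both fall short in a way that illustrates the difficulty. Replacing $Z(C)$ by its normal core $\bigcap_{g\in G}Z(C)^{g}$ does yield a normal abelian subgroup, but intersecting up to $n$ conjugates each of index $n$ only bounds the index by something of the shape $n^{n}$, far weaker than $n^2$. Inducting on $|G|$ by passing to $G/Z(G)$ keeps the index under control, yet the preimage of a characteristic abelian subgroup of the quotient is only nilpotent of class at most $2$, not abelian, so the induction fails to preserve the very property we need. The sharp statement instead requires a genuinely structural argument—exploiting, for instance, the self-centralizing relation $C_G(Z(C))=C$ together with the way $\Aut(G)$ permutes the subgroups produced this way, or reducing to a canonical characteristic subgroup and estimating its index directly—and this is exactly the content of the cited theorem of Isaacs. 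I expect this reconciliation of the three simultaneous demands, namely \emph{abelian}, \emph{characteristic}, and \emph{index at most} $[G:A]^2$, to be where essentially all the work lies; the centralizer observation of the first stage is routine by comparison.
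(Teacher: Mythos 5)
Your proposal is not a proof: it establishes only the easy half and then explicitly hands the essential half back to the cited theorem. The first stage is correct and cleanly argued --- since every element of $C=C_G(A)$ centralizes $A$ and $A\subseteq C$, one gets $A\subseteq Z(C)$, so $Z(C)$ is an abelian subgroup with $[G:Z(C)]\leqslant [G:A]$ --- and your diagnoses of the two tempting upgrades are accurate (the normal core of a subgroup of index $n$ only gives an index bound of the shape $n^n$, and is in any case merely normal rather than characteristic; the preimage under $G\to G/Z(G)$ of an abelian characteristic subgroup is only nilpotent of class at most $2$). But the statement being proved \emph{is} the characteristic upgrade with the exponent $2$, and at exactly that point you write that this ``is exactly the content of the cited theorem of Isaacs.'' That is a concession, not an argument, so the attempt has a genuine gap coinciding with the entire substance of the theorem. (For calibration: the paper itself offers no proof either, citing~\cite[Theorem 1.41]{Isaacs}, so the relevant comparison is with Isaacs' argument.)

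The missing idea is the Chermak--Delgado measure, and it is short enough to record. For $H\leqslant G$ set $m_G(H)=|H|\cdot|C_G(H)|$. The subgroups attaining the maximal value of $m_G$ form a lattice closed under $H\mapsto C_G(H)$, with $C_G(C_G(H))=H$ for its members; in particular it has a unique minimal member $N$. Uniqueness forces $\varphi(N)=N$ for every $\varphi\in\Aut(G)$ (automorphisms preserve the measure, hence permute the lattice), so $N$ is characteristic; minimality gives $N\subseteq C_G(N)$, since $C_G(N)$ also lies in the lattice, so $N$ is abelian. Finally, since $A$ is abelian, $m_G(A)=|A|\cdot|C_G(A)|\geqslant|A|^2$, whence $|N|\cdot|G|\geqslant|N|\cdot|C_G(N)|=m_G(N)\geqslant m_G(A)\geqslant|A|^2$, i.e. $[G:N]\leqslant|G|^2/|A|^2=[G:A]^2$. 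Note that this route uses $A$ only through the inequality $m_G(A)\geqslant|A|^2$, so your first-stage reduction via $Z(C_G(A))$, while correct, does no work toward the actual bound; the self-centralizing relation you gesture at is indeed the right germ (it is the duality $H\mapsto C_G(H)$ on the lattice), but without the measure and the unique minimal member there is no construction of a characteristic subgroup at all.
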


Let us prove an auxiliary proposition from group theory.

\begin{propos}\label{prop: section and direct product}
    Let $H$ be a group with a trivial center. Suppose we have a short exact sequence of groups $$1 \xrightarrow{} H \xrightarrow{} G \xrightarrow{} \mathbb{Z}/m\mathbb{Z} \xrightarrow{} 0,$$ and there exists an element $g\in G$ such that $g$ maps to $1$, and conjugation by $g$ induces an inner automorphism of $H$. Then $G \simeq H \times \mathbb{Z}/m\mathbb{Z}$.
\end{propos}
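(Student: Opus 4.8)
The plan is to produce an explicit central complement to $H$ inside $G$. Regard $H$ as a normal subgroup of $G$ via the injection in the sequence, so that conjugation by any element of $G$ restricts to an automorphism of $H$. By hypothesis there is a $g\in G$ lifting the generator $1\in\mathbb{Z}/m\mathbb{Z}$ whose conjugation action on $H$ is inner; choose $h\in H$ with $gxg^{-1}=hxh^{-1}$ for all $x\in H$. The first step is to set $t:=h^{-1}g$ and observe that $t$ still maps to $1$ in $\mathbb{Z}/m\mathbb{Z}$ (since $h\in H$ maps to $0$), while for every $x\in H$ we have $txt^{-1}=h^{-1}(gxg^{-1})h=h^{-1}(hxh^{-1})h=x$. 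Thus $t$ centralizes $H$ elementwise, which is the whole point of subtracting off the inner part.

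Next I would pin down the order of $t$. Since $t$ maps to $1$, the power $t^m$ lands in $H=\ker(G\to\mathbb{Z}/m\mathbb{Z})$. On the other hand $t$ commutes with all of $H$, hence so does $t^m$; therefore $t^m$ is an element of $H$ commuting with every element of $H$, i.e. $t^m\in Z(H)$. Here the hypothesis that $H$ has trivial center forces $t^m=e$. Because the image of $t$ generates $\mathbb{Z}/m\mathbb{Z}$, the order of $t$ is a multiple of $m$, so combined with $t^m=e$ we get that $\langle t\rangle$ is cyclic of order exactly $m$, and moreover $H\cap\langle t\rangle=\{e\}$ (a power $t^k$ lies in $H$ only when $k\equiv 0\pmod m$).

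Finally I would assemble the direct product. The subgroup $H$ is normal by construction, and $\langle t\rangle$ is normal as well: every element of $G$ can be written as a product of an element of $H$ (which commutes with $t$) and a power of $t$, so conjugation preserves $\langle t\rangle$. Since $H\langle t\rangle$ surjects onto $\mathbb{Z}/m\mathbb{Z}$ and contains $H=\ker$, we have $H\langle t\rangle=G$; together with $H\cap\langle t\rangle=\{e\}$ and the elementwise commuting of the two factors this yields $G\simeq H\times\langle t\rangle\simeq H\times\mathbb{Z}/m\mathbb{Z}$. The only genuinely delicate point is the step $t^m=e$: it is exactly where the triviality of $Z(H)$ is indispensable, since without it $t^m$ could be a nontrivial central element obstructing the splitting. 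Everything else is routine verification of the internal-direct-product criterion.
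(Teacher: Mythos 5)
Your proposal is correct and takes essentially the same route as the paper's proof: both adjust the lift by the inner part (your $t=h^{-1}g$ versus the paper's $g'=gh^{-1}$) so that it centralizes $H$, and both use triviality of $Z(H)$ to force $t^m=e$. The only cosmetic difference is the conclusion: the paper packages $t$ as a section $1\mapsto g'$ of $p$, yielding $G\simeq H\rtimes\mathbb{Z}/m\mathbb{Z}$ with trivial action, while you verify the internal direct product criterion directly --- the same argument in different wrapping.
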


\begin{proof}
    Denote the homomorphism from $G$ to $\mathbb{Z}/m\mathbb{Z}$ by $p$. Then we have~$p(g) = 1$. Let us denote by $\alpha$ the automorphism of the group $H$ induced by conjugation by $g$. By the condition, $\alpha$ is an inner automorphism, so there exists an element $h\in H$ such that $\alpha$ is a conjugation by~$h$.
    
    Denote $g' = gh^{-1}$. Firstly, note that $p(g') = 1$, hence~$p((g')^m) =0$, that is, $(g')^m \in H$. Secondly, note that conjugation by~$g'$ induces a trivial automorphism of $H$, so conjugation by $(g')^m$ induces a trivial automorphism of $H$. Therefore the element $(g')^m$ lies in the center of $H$, which is trivial. It follows that $(g')^m = e$, hence the homomorphism $p$ has a section~$s:1\mapsto g'$. Thus, $G\simeq H \rtimes\mathbb{Z}/m\mathbb{Z}$ with trivial action, that is~\mbox{$G\simeq H \times\mathbb{Z}/m\mathbb{Z}$}. \end{proof}

The following proposition is a direct corollary of Proposition~\ref{prop: section and direct product}.

\begin{propos}\label{prop: C=0, Inn = Aut}
Let $H$ be a group with a trivial center. Suppose that all automorphisms of $H$ are inner. Let $A$ be a finite abelian group. Then any group $G$ which includes in the exact sequence $$1\xrightarrow{} H \xrightarrow{} G \xrightarrow{} A \xrightarrow{} 0,$$ is isomorphic to the direct product $H\times A$.
\end{propos}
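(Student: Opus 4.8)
The plan is to reduce to the cyclic case already settled in Proposition~\ref{prop: section and direct product} and then to identify the centralizer $C_G(H)$ as a complement to $H$ that is isomorphic to $A$.

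First I would decompose $A$ into cyclic factors, $A \simeq \mathbb{Z}/m_1\mathbb{Z} \times \dots \times \mathbb{Z}/m_k\mathbb{Z}$, with $e_i$ a generator of the $i$-th factor. Denote by $p\colon G \to A$ the given surjection. Since $H = \ker p$ is normal in $G$, conjugation by any element of $G$ is an automorphism of $H$, and by hypothesis every such automorphism is inner. For each $i$, set $G_i = p^{-1}(\langle e_i\rangle)$. Then $G_i$ sits in a short exact sequence $1 \to H \to G_i \to \mathbb{Z}/m_i\mathbb{Z} \to 0$, and any preimage of $e_i$ induces an inner automorphism of $H$, so the hypotheses of Proposition~\ref{prop: section and direct product} are met. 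It yields an element $g_i \in G_i$ that maps to $e_i$, centralizes $H$, and satisfies $g_i^{m_i} = e$.

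Next I would examine the centralizer $C = C_G(H)$. Since $H$ has trivial center, $C \cap H = Z(H) = \{e\}$, so the restriction $p|_C$ is injective. On the other hand $C$ contains all of the elements $g_i$ constructed above, hence $p(C)$ contains every generator $e_i$ and therefore $p(C) = A$. Thus $p|_C\colon C \to A$ is an isomorphism; in particular $C$ is abelian.

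Finally I would assemble the direct product. Both $H$ and $C$ are normal in $G$: the subgroup $H$ because it is $\ker p$, and $C$ because any conjugate of an element of $C$ again centralizes $H$, using that $G = HC$ (which holds since $HC$ contains $H = \ker p$ and surjects onto $A$). Together with $H \cap C = \{e\}$ this gives $G \simeq H \times C \simeq H \times A$. The only genuinely delicate point is the surjectivity $p(C) = A$: it is exactly here that one needs both that $H$ has trivial center and that every automorphism of $H$ is inner, so that each cyclic quotient can be split by an element centralizing $H$. Once the $g_i$ are available, the rest is the routine verification of the internal direct product criterion.
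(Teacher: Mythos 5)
Your proposal is correct and follows essentially the same route as the paper: the same decomposition of $A$ into cyclic factors and the same factor-by-factor application of (the proof of) Proposition~\ref{prop: section and direct product} to obtain lifts $g_i$ of the generators that centralize $H$ and have the right orders. Your only variation is in the final assembly, and it is the same computation in disguise: where the paper checks that the lifts pairwise commute via the commutator $c_{ij}$, which lies in $C_G(H)\cap H = Z(H)=\{e\}$, and then glues the cyclic sections into a section $A\to G$, you package exactly this fact as the injectivity of $p$ restricted to $C_G(H)$, exhibiting the centralizer as an abelian complement.
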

\begin{proof}
Since $A$ is a finite abelian group, there is an isomorphism $$A \simeq \mathbb{Z}/n_{1}\mathbb{Z} \times \mathbb{Z}/n_2\mathbb{Z} \times \ldots \times \mathbb{Z}/n_r\mathbb{Z}.$$ Then we will denote elements of $A$  by $(m_1,m_2,\ldots, m_r)$, where $m_i \in \mathbb{Z}/n_i\mathbb{Z}$.

Denote the homomorphism from $G$ to $A$ by $p$. Let us choose elements~$g_1, g_2, \dots, g_r\in G$ such that $$p(g_i) = (0,\dots,0,\underset{i}{1},0,\dots,0).$$ Let us act on $H$ by conjugation by element~$g_i$. This action induces an automorphism 
~$\alpha$ of~$H$, and $\alpha$ is inner, since all automorphisms of the group $H$ are inner. So we are in the case of Proposition~\ref{prop: section and direct product}. Therefore, over each of the specified cyclic subgroups there is a section $$s_i: (0,\dots,0,\underset{i}{1},0,\dots,0) \mapsto g'_i,$$ and conjugation by element $g'_i$ induces a trivial automorphism of $H$.

Let us show that obtained sections are glued into a section over the entire group~$A$. To do this, it is enough to show that the elements $g'_i$ and $g'_j$ commute for all~$i, j\in\{1,\dots,n\}$. Consider the commutator $$c_{ij} = g'_ig' _j(g'_i)^{-1}(g' _j)^{-1}$$ of elements $g'_i$ and ~$g'_j$. Firstly, conjugation by this element induces a trivial automorphism of $H$. Secondly, this element lies in $H$, since $p(c_{ij}) = (0,\dots,0).$ But the center of the group $H$ is trivial, hence $c_{ij} = e$, that is, the elements~$g'_i$ and $g' _j$ commute. Therefore, we get a section $$s: A \xrightarrow{} G, \quad s: (m_1,m_2,\dots, m_r) \mapsto (g'_1)^{m_1}(g'_2)^{m_2}\dots(g'_r)^{m_r},$$ and $G \simeq H \rtimes A$. But, as we have already mentioned, conjugation by the element $g'_i$ induces a trivial automorphism of $H$ for any $i\in\{1,\dots,r\}$, which means that $G\simeq H\times A$.\end{proof}

We will need standard facts about automorphisms of groups $\mathfrak{S}_n$ and $\mathfrak{A}_n$.

\begin{theorem}[{see for example~\cite[\S 4.4, Exercise 18]{Dum-Foo}}]\label{theo:auto S_n}
Let $n$ be a positive integer,~$n\geqslant 3$, $n\neq 6$. Then $$\Aut{\mathfrak{S}_n} \simeq \mathfrak{S}_n.$$
\end{theorem}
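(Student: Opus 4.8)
The plan is to prove the stronger statement that every automorphism of $\mathfrak{S}_n$ is inner. Combined with the fact that $\mathfrak{S}_n$ has trivial center for $n \geqslant 3$ (so that the natural homomorphism $\mathfrak{S}_n \to \operatorname{Inn}(\mathfrak{S}_n)$ sending a permutation to conjugation by it is an isomorphism), this yields $\Aut(\mathfrak{S}_n) = \operatorname{Inn}(\mathfrak{S}_n) \simeq \mathfrak{S}_n$. Since the transpositions generate $\mathfrak{S}_n$, an automorphism $\varphi$ is completely determined by its images on them, so the heart of the argument is to control $\varphi$ on the set of transpositions.

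First I would observe that $\varphi$ preserves the orders of elements and carries conjugacy classes to conjugacy classes. In particular it permutes the conjugacy classes of involutions, which in $\mathfrak{S}_n$ are indexed by the number $k$ of disjoint transpositions appearing in the cycle decomposition, with $1 \leqslant k \leqslant \lfloor n/2\rfloor$, the class with parameter $k$ having cardinality $\frac{n!}{2^{k}\,k!\,(n-2k)!}$. The key combinatorial step is to verify that for $n \neq 6$ the class of transpositions ($k=1$, of size $\binom{n}{2}$) is the \emph{unique} conjugacy class of involutions of its cardinality; it then follows that $\varphi$ must send transpositions to transpositions. I expect this counting comparison to be the main obstacle, and it is precisely here that $n=6$ is exceptional: there the class of products of three disjoint transpositions also has size $15$, which is what permits the exotic outer automorphism and forces the hypothesis $n \neq 6$.

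Granting that $\varphi$ maps transpositions to transpositions, I would reconstruct it up to conjugation, thereby exhibiting it as inner. The relevant structure is that two transpositions commute exactly when they are disjoint and fail to commute exactly when they share a single point (in which case they generate a copy of $\mathfrak{S}_3$), and $\varphi$ preserves these incidence relations. Writing $\varphi((1\,2)) = (a_1\,a_2)$ and using that each of $(1\,3),(1\,4),\dots,(1\,n)$ fails to commute with $(1\,2)$ while pairwise sharing the point $1$, one checks that the images $\varphi((1\,j))$ admit a common point $a$. Defining $\sigma \in \mathfrak{S}_n$ by $\sigma(1) = a$ and sending each $j \geqslant 2$ to the remaining point of the transposition $\varphi((1\,j))$, the inner automorphism $c_\sigma$ agrees with $\varphi$ on the generating set $(1\,2),(1\,3),\dots,(1\,n)$; since these transpositions generate $\mathfrak{S}_n$, we conclude $\varphi = c_\sigma$, so $\varphi$ is inner, completing the proof.
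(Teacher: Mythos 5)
Your argument is correct, and in fact the paper offers no proof of this theorem at all: it is quoted as a standard fact with a reference to the exercise in Dummit--Foote, and your proposal is essentially a reconstruction of precisely that standard argument. It is worth observing that your proof runs exactly parallel to the proof the paper \emph{does} write out for Theorem~\ref{theo:auto A_n}: there $3$-cycles play the role of your transpositions, a conjugacy-class count (via Lemma~\ref{lemma: conjugacy classes in A_n} and the equation $2\binom{n}{3} = \frac{n!}{k!\,3^k\,(n-3k)!}$, solvable only for $k=1$ when $n\neq 6$) replaces your comparison of involution classes, and the automorphism is then pinned down on the generating set $(123),(124),\dots,(12n)$ by exhibiting a conjugating permutation $\mu$ --- which is precisely your reconstruction of $\sigma$ from the images of $(1\,2),\dots,(1\,n)$.

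Two places in your sketch deserve to be written out in full. First, the class-size comparison must be carried out for \emph{every} $k\geqslant 2$, not only the exceptional $k=3$: equating $\frac{n!}{2^k\,k!\,(n-2k)!} = \binom{n}{2}$ gives $(n-2)(n-3)\cdots(n-2k+1) = 2^{k-1}k!$, a product of $2k-2$ consecutive integers, and one checks that the only solution with $n\geqslant 3$ is $(n,k)=(6,3)$, both classes then having $15$ elements --- exactly the exception you identified. Second, your phrase ``one checks that the images $\varphi((1\,j))$ admit a common point'' conceals the one configuration that genuinely needs an argument: three transpositions can pairwise intersect without sharing a common point, namely the triangle $(a\,b),(b\,c),(c\,a)$. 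This case is excluded by an order computation: the product $(a\,b)(b\,c)(c\,a)$ is again a transposition, of order $2$, whereas $(1\,2)(1\,3)(1\,4)$ is a $4$-cycle of order $4$, so $\varphi$ cannot send the latter triple to a triangle (alternatively, a triangle forces $\varphi((2\,3)) = \varphi((1\,4))$, contradicting injectivity); once the triangle is ruled out, any family of pairwise intersecting distinct transpositions has a common point, and your definition of $\sigma$ and the conclusion $\varphi = c_\sigma$ on the generating set go through as stated.
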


To prove a similar result for the group $\mathfrak{A}_n$, we need the following simple lemma.

\begin{lemma}\label{lemma: conjugacy classes in A_n} 
Let $n$ be a positive integer. Let $C_g$ be the conjugacy class of an even permutation $g\in\mathfrak{S}_n$. Then
\begin{itemize}
    \item class $C_g$ splits into two conjugacy classes in $\mathfrak{A}_n$ if and only if the permutation $g$ decomposes into independent cycles of odd lengths, and all lengths are different (here a fixed point is considered as a cycle of length $1$);
    \item the class $C_g$ is a conjugacy class in $\mathfrak{A}_n$ if and only if the decomposition of $g$ into independent cycles contains a cycle of even length or two cycles of the same odd length.
\end{itemize}
\end{lemma}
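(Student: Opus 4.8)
The plan is to reduce the statement to a standard fact about index-two subgroups and then to analyze the centralizer of $g$ in $\mathfrak{S}_n$. First I would record the following general principle: if $N$ is a subgroup of index $2$ in a finite group $G$ and $g\in N$, then the $G$-conjugacy class of $g$ either coincides with its $N$-conjugacy class, or splits into two $N$-classes of equal size; and the splitting occurs precisely when $Z_G(g)\subseteq N$, where $Z_G(g)$ denotes the centralizer of $g$ in $G$. To prove this I compare the orbit sizes $[N:Z_N(g)]$ and $[G:Z_G(g)]$ via $Z_N(g)=Z_G(g)\cap N$, distinguishing the cases $Z_G(g)\subseteq N$ and $Z_G(g)\not\subseteq N$. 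In the second case any element of $Z_G(g)\setminus N$ shows $[Z_G(g):Z_N(g)]=2$, forcing the $N$-orbit to have the full size $|C_g|$; in the first case $Z_N(g)=Z_G(g)$, so the $N$-orbit has half the size of $C_g$.

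Applying this with $G=\mathfrak{S}_n$ and $N=\mathfrak{A}_n$, I reduce the lemma to the assertion that $C_g$ splits in $\mathfrak{A}_n$ if and only if no odd permutation commutes with $g$, that is, $Z_{\mathfrak{S}_n}(g)\subseteq\mathfrak{A}_n$.

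Next I would describe the centralizer explicitly. If $g$ has $m_k$ independent cycles of length $k$ (fixed points counting as $1$-cycles), then $Z_{\mathfrak{S}_n}(g)$ is generated by the individual cycles of $g$ (each cycle of length $k$ generating a cyclic group of order $k$) together with the permutations that interchange cycles of equal length. I then analyze parities. A single $k$-cycle is even if and only if $k$ is odd. Hence, if $g$ has a cycle of even length, that cycle is itself an odd permutation commuting with $g$, so $Z_{\mathfrak{S}_n}(g)\not\subseteq\mathfrak{A}_n$ and $C_g$ does not split. If $g$ has two cycles of the same odd length $k$, the permutation matching their entries is a product of $k$ transpositions, hence odd, and it commutes with $g$; again $C_g$ does not split (two fixed points are the $k=1$ instance, where this permutation is a single transposition). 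Conversely, if all cycle lengths of $g$ are odd and pairwise distinct, then every $m_k\leqslant 1$, so the cycle-interchanging generators are absent and $Z_{\mathfrak{S}_n}(g)$ is generated by the cycles alone, all of which are even; thus $Z_{\mathfrak{S}_n}(g)\subseteq\mathfrak{A}_n$ and $C_g$ splits. This yields both bullet points.

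The main obstacle I anticipate is the bookkeeping around the centralizer: pinning down a generating set and checking that, in the case of distinct odd lengths, there really are no odd elements at all. The index-two splitting principle is routine once the orbit-size comparison is set up, so the technical heart of the argument is the description of $Z_{\mathfrak{S}_n}(g)$ as generated by the listed elements together with the verification of their parities.
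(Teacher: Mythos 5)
Your proof is correct. The paper itself offers no argument for this lemma --- its ``proof'' is literally the sentence ``A simple exercise'' --- and your argument is the standard one the author presumably had in mind: the index-two splitting criterion (the $\mathfrak{S}_n$-class of $g$ splits in $\mathfrak{A}_n$ if and only if $Z_{\mathfrak{S}_n}(g)\subseteq\mathfrak{A}_n$) combined with the wreath-product description of the centralizer, with the parity check that a $k$-cycle is even exactly when $k$ is odd and that the swap of two $k$-cycles is a product of $k$ transpositions, so it correctly and completely fills the gap the paper leaves to the reader.
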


\begin{proof}
    A simple exercise.
\end{proof}

\begin{theorem}\label{theo:auto A_n}
Let $n$ be a positive integer, $n\geqslant 4$, $n\neq 6$. Then $$\Aut{\mathfrak{A}_n} \simeq \mathfrak{S}_n.$$
\end{theorem}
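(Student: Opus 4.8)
The plan is to exhibit a natural injection $\mathfrak{S}_n \hookrightarrow \Aut(\mathfrak{A}_n)$ and then prove it is surjective. Since $\mathfrak{A}_n$ is normal in $\mathfrak{S}_n$, conjugation defines a homomorphism $c \colon \mathfrak{S}_n \to \Aut(\mathfrak{A}_n)$, $c(\sigma)(\tau) = \sigma\tau\sigma^{-1}$. Its kernel is the centralizer $C_{\mathfrak{S}_n}(\mathfrak{A}_n)$, which I would check is trivial for $n \geqslant 4$: an element commuting with the $3$-cycles $(1\,2\,k)$ and $(1\,3\,k)$ for varying $k$ is forced to be the identity. Thus $c$ embeds $\mathfrak{S}_n$ into $\Aut(\mathfrak{A}_n)$, and everything reduces to showing that every automorphism $\varphi$ of $\mathfrak{A}_n$ has the form $c(\sigma)$.

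The key claim is that $\varphi$ carries the conjugacy class of $3$-cycles to itself. I would isolate this class among all elements of order $3$ — which are exactly the products of $k$ disjoint $3$-cycles, $1 \leqslant k \leqslant \lfloor n/3\rfloor$ — by comparing centralizer orders, a quantity preserved by any automorphism. In $\mathfrak{S}_n$ the centralizer of a product of $k$ disjoint $3$-cycles has order $3^k\, k!\,(n-3k)!$, and passing to $\mathfrak{A}_n$ rescales this by a controlled factor (of index at most $2$); a direct comparison, which Lemma~\ref{lemma: conjugacy classes in A_n} helps organize since it records when such a class remains a single $\mathfrak{A}_n$-class, shows that the $3$-cycle class ($k=1$) has strictly the largest centralizer precisely when $n \geqslant 4$ and $n \neq 6$. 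The borderline equality between $k=1$ and $k=2$ occurs exactly at $n=6$, which is why that value is excluded. Hence for the admissible $n$ the automorphism $\varphi$ must permute the $3$-cycles.

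Granting that $\varphi$ preserves $3$-cycles, I would reconstruct the permutation $\sigma$ realizing $\varphi$. For each point $i$ let $G_i \simeq \mathfrak{A}_{n-1}$ be the subgroup generated by the $3$-cycles fixing $i$; these are exactly the point stabilizers. Because $\varphi$ permutes $3$-cycles and preserves the incidence pattern of pairs of $3$-cycles (detectable from the orders of their products), it sends each $G_i$ to another such subgroup $G_{\sigma(i)}$, and the assignment $i \mapsto \sigma(i)$ is a well-defined permutation of $\{1,\dots,n\}$. One then checks that $\varphi$ and $c(\sigma)$ agree on the generating set $\{(1\,2\,k)\}_{k=3}^{n}$, hence on all of $\mathfrak{A}_n$, so $\varphi = c(\sigma)$ and $c$ is surjective. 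As an alternative to this reconstruction, one could instead show that a $3$-cycle-preserving automorphism of $\mathfrak{A}_n$ extends to an automorphism of $\mathfrak{S}_n$ and then invoke Theorem~\ref{theo:auto S_n}.

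The main obstacle is the centralizer comparison that pins down the $3$-cycle class: this is exactly the step that fails for $n=6$, where the class of $3$-cycles and the class of products of two disjoint $3$-cycles have equal size and equal centralizer order, producing the exceptional outer automorphism. Once that numerical separation is secured, the remaining work — the injectivity of $c$ and the bookkeeping in the reconstruction of $\sigma$ — is routine.
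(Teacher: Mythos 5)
Your proposal is correct and follows the same skeleton as the paper's proof: embed $\mathfrak{S}_n$ into $\Aut(\mathfrak{A}_n)$ by conjugation (injectivity from the trivial centralizer), pin down the class of $3$-cycles among order-$3$ elements by a counting argument that fails exactly at $n=6$, and then reconstruct the conjugating permutation. Your counting step is the paper's in dual form: the paper equates class sizes, $2\binom{n}{3} = \frac{n!}{k!\,3^k\,(n-3k)!}$, using Lemma~\ref{lemma: conjugacy classes in A_n} to know both classes are single $\mathfrak{A}_n$-classes for $n\geqslant 7$, while you compare centralizer orders $3^k\,k!\,(n-3k)!$ --- equivalent via orbit--stabilizer, and your ``controlled factor of index at most $2$'' is in fact a uniform factor of $2$ for all these classes when $n\geqslant 7$, since none of them splits. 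Where you genuinely diverge is the final reconstruction: the paper uses the explicit generating set $\{(12k)\}_{k=3}^{n}$ together with the observation that two $3$-cycles have product of order $2$ exactly when they are of the form $(ijk)$, $(ijl)$ with $k\neq l$; this forces the image set to be $\{(i_1i_2i_3),\dots,(i_1i_2i_n)\}$ and hands you the permutation $\mu$ immediately, with nothing left to check. Your stabilizer-based reconstruction is workable but leaves real bookkeeping undone: identifying when $\varphi(G_i)$ is again a point stabilizer requires detecting support incidence of pairs of $3$-cycles group-theoretically, and the obvious invariants need care (disjoint $3$-cycles commute, but so do powers of a single $3$-cycle; products of $3$-cycles sharing two points can have order $2$ or $3$). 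Your alternative --- showing a $3$-cycle-preserving automorphism extends to $\mathfrak{S}_n$ and citing Theorem~\ref{theo:auto S_n} --- is also viable, but the paper's generating-set trick is the more economical closing move.
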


\begin{proof}
Immediately note that $\mathfrak{S}_n$ is embedded in $\Aut{\mathfrak{A}_n}$ for $n\geqslant 4$. Indeed, consider the homomorphism $$\rho :\mathfrak{S}_n\xrightarrow{}\Aut{\mathfrak{A}_n}, \quad\tau \mapsto\rho_\tau,$$ where $\rho_\tau$ is a conjugation by permutation $\tau$. This homomorphism is injective because the centralizer of $\mathfrak{A}_n$ in $\mathfrak{S}_n$ is trivial for $n\geqslant 4$. Let us show that for $n\neq 6$ the homomorphism~$\rho$ is also surjective.

Let $\varphi$ be an arbitrary automorphism of $\mathfrak{A}_n$. Let us show that $\varphi$ maps cycles of length~$3$ into cycles of length $3$. Since $\varphi$ preserves the orders of elements, then triple cycle must maps into an element of order $3$, that is, into the product of $k$ pairwise disjoint triple cycles, for some $k\in\mathbb{Z}_{> 0}$. Note that for $n\leqslant 5$ we automatically have $k=1$, so it remains to deal with the case when~$n\geqslant 7$.

Suppose $n\geqslant 7$. Since $\varphi$ is an automorphism, the conjugacy classes maps into conjugacy classes. By Lemma~\ref{lemma: conjugacy classes in A_n}, all triple cycles form one conjugacy class in $\mathfrak{A}_n$. The products of $k$ pairwise disjoint triple cycles form one conjugacy class in $\mathfrak{A}_n$ by the same lemma. Equate the number of elements in these classes: $$2 {{n}\choose{3}} = \frac{n!}{k!3^k(n-3k)!}.$$ Taking into account the restriction of $n\geqslant 7$, the obtained equality is true only for $k=1$. Thus, we proved that for $n\neq 6$, the automorphism $\varphi$ maps cycles of length $3$ into cycles of length $3$.

Consider the following set of generators of $\mathfrak{A}_n$: $$A = \{ (123), (124), (125), \dots , (12n)\}.$$ Note that the product of any two considered permutations has the order $2$, which means that the same is true for the set of permutations: $$B = \{\varphi((123)),\varphi((124)), \varphi((125)), \dots, \varphi((12n))\}.$$ Let $\tau_1$ and $\tau_2$ be cycles of length $3$. It is easy to see that the order of permutation $\tau_1\circ\tau_2$ is equal to~$2$ if and only if these permutations have the form: $$\tau_1 = (ijk),\; \tau_2 = (ijl),\; k\neq l.$$ It follows that any pair of permutations from the set $B$ is represented in this form. Therefore, the entire set is represented as: $$B = \{(i_1i_2i_3), (i_1i_2i_4), (i_1i_2i_5), \dots, (i_1i_2i_n)\},$$ where $i_r \neq i_s$ for $r\neq s$. Consider a permutation $\mu\in\mathfrak{S}_n$ such that $$\mu(j)= i_j,\; j\in\{1,\dots,n\}.$$ Then for any permutation $\sigma\in A$ the equality $\varphi(\sigma) = \mu \sigma \mu^{-1}$ holds. Since the set $A$ generates $\mathfrak{A}_n$, the automorphism $\varphi$ coincides with the automorphism $\rho_\mu$. So the homomorphism~$\rho$ is surjective, and therefore is an isomorphism. \end{proof}

From Proposition~\ref{prop: C=0, Inn = Aut} we obtain a corollary.

\begin{sled}\label{sled: ext of S_n}
Let $n$ be a positive integer, $n\geqslant 3$, $n\neq 6$. Let $A$ be a finite abelian group. Then any group $G$ which includes in the exact sequence $$1\xrightarrow{}\mathfrak{S}_n\rightarrow{} G \xrightarrow{} A \xrightarrow{} 0,$$ is isomorphic to the direct product $\mathfrak{S}_n\times A$.
\end{sled}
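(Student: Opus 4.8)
The plan is to recognize this as an immediate application of Proposition~\ref{prop: C=0, Inn = Aut} with $H = \mathfrak{S}_n$, so the entire task reduces to checking that $\mathfrak{S}_n$ satisfies the two hypotheses of that proposition: it has trivial center, and all of its automorphisms are inner. Once both are confirmed, the conclusion $G \simeq \mathfrak{S}_n \times A$ follows by quoting the proposition verbatim.

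First I would recall the standard fact that the center of $\mathfrak{S}_n$ is trivial for $n \geqslant 3$. This handles the first hypothesis and, crucially, also feeds into the verification of the second one.

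Next I would argue that every automorphism of $\mathfrak{S}_n$ is inner when $n \geqslant 3$ and $n \neq 6$. The point is that the inner automorphism group satisfies $\mathrm{Inn}(\mathfrak{S}_n) \simeq \mathfrak{S}_n / Z(\mathfrak{S}_n)$, which equals $\mathfrak{S}_n$ by the triviality of the center just established; on the other hand, Theorem~\ref{theo:auto S_n} gives $\Aut(\mathfrak{S}_n) \simeq \mathfrak{S}_n$ for $n \neq 6$. Since $\mathrm{Inn}(\mathfrak{S}_n)$ is always a subgroup of $\Aut(\mathfrak{S}_n)$, and here both are finite groups of the same order $n!$, the inclusion $\mathrm{Inn}(\mathfrak{S}_n) \hookrightarrow \Aut(\mathfrak{S}_n)$ must be an equality. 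Hence all automorphisms of $\mathfrak{S}_n$ are inner. With both hypotheses in hand, Proposition~\ref{prop: C=0, Inn = Aut} applies directly and yields the desired isomorphism $G \simeq \mathfrak{S}_n \times A$.

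There is no substantial obstacle here, since all the real content sits in Theorem~\ref{theo:auto S_n} and Proposition~\ref{prop: C=0, Inn = Aut}, which are already available. The only subtlety worth flagging is that one should not merely cite the abstract isomorphism $\Aut(\mathfrak{S}_n) \simeq \mathfrak{S}_n$, but verify that this isomorphism is genuinely realized by \emph{inner} automorphisms; the order-counting argument above closes that gap cleanly and is exactly what the proposition requires.
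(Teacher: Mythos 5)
Your proof is correct and takes essentially the same route as the paper, which likewise notes that $\mathfrak{S}_n$ has trivial center and only inner automorphisms and then applies Proposition~\ref{prop: C=0, Inn = Aut}. The one difference is that the paper simply cites Theorem~\ref{theo:auto S_n} for the inner-automorphism claim, whereas you make that step rigorous via the order count $|\mathrm{Inn}(\mathfrak{S}_n)| = |\mathfrak{S}_n/Z(\mathfrak{S}_n)| = n! = |\Aut(\mathfrak{S}_n)|$, correctly closing a gap the paper leaves implicit.
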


\begin{proof}
For the specified $n$, the group $\mathfrak{S}_n$ has a trivial center and all its automorphisms are inner according to Theorem~\ref{theo:auto S_n}. So we can apply Proposition~\ref{prop: C=0, Inn = Aut}.
\end{proof}

\begin{propos}\label{prop: A_n x Z2 or S_n}
Let $n$ be a positive integer, $n\geqslant 4$, $n\neq 6$. Then the group $G$, which includes in the exact sequence  $$1 \xrightarrow{} \mathfrak{A}_n \xrightarrow{} G \xrightarrow{} \mathbb{Z}/2\mathbb{Z} \xrightarrow{} 0,$$ is isomorphic either to $\mathfrak{A}_n \times \mathbb{Z}/2\mathbb{Z}$, or to $\mathfrak{S}_n$. \end{propos}

\begin{proof}
Denote the homomorphism from $G$ to $\mathbb{Z}/2\mathbb{Z}$ by $p$. Choose an element~$g\in G$ such that $p(g) = 1$. Let us act on $\mathfrak{A}_n$ by conjugation by~$g$. This action induces the automorphism~$\alpha$ of the group~$\mathfrak{A}_n$. Now, if $\alpha$ is an inner automorphism, then by the Proposition~\ref{prop: section and direct product} we have the isomorphism $G\simeq\mathfrak{A}_n\times\mathbb{Z}/2\mathbb{Z}$.

Assume that $\alpha$ is not an inner automorphism. By Theorem~\ref{theo:auto A_n} we have an isomorphism~$\Aut(\mathfrak{A}_n)\simeq\mathfrak{S}_n$, so we can choose an element $h_0\in\mathfrak{A}_n\subset G$ such that the automorphism $\beta \in\Aut(\mathfrak{A}_n)$, induced by conjugation by element~$g_0 = gh_0$, is a conjugation by transposition. Firstly, we have $p(g_0) = 1$. Hence~$p(g^2_0) = 0$, that is, $g^2_0 \in\mathfrak{A}_n$. Secondly, conjugation by the element $g^2_0$ induces a trivial automorphism of $\mathfrak{A}_n$, which means that~$g^2_0 =e$. Therefore, the homomorphism $p$ has a section $s: 1 \mapsto g_0$, and $G\simeq\mathfrak{A}_n\rtimes\mathbb{Z}/2\mathbb{Z}$, where nontrivial element of the group $\mathbb{Z}/2\mathbb{Z}$ acts by conjugation by transposition. It follows that $G\simeq\mathfrak{S}_n$.\end{proof}

\begin{propos}\label{prop: A_4 x Zm}
Let $n$ and $m$ be positive integers, $n\geqslant 4$, $n\neq 6$. Let the group $G$ be included in the exact sequence $$1 \xrightarrow {} \mathfrak {A} _n \xrightarrow {} G\xrightarrow {} \mathbb {Z} / m \mathbb{Z} \xrightarrow{} 0.$$ If $m = 2k + 1$, then $G$ is isomorphic to $\mathfrak{A}_n\times\mathbb{Z}/m\mathbb{Z}$. If $m = 2k$, then either $G$ is isomorphic to~$\mathfrak{A}_n\times\mathbb{Z}/m\mathbb{Z}$, or at least contains a normal subgroup isomorphic to $\mathfrak{A}_n\times\mathbb{Z}/k\mathbb{Z}$.
\end{propos}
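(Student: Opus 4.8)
The plan is to track, for each element of $G$, whether conjugation by it induces an inner or an outer automorphism of $\mathfrak{A}_n$, and then to reduce to Proposition~\ref{prop: section and direct product}. Since $\mathfrak{A}_n$ is normal in $G$, conjugation gives a homomorphism $G \to \Aut(\mathfrak{A}_n)$. For $n \geqslant 4$ the center of $\mathfrak{A}_n$ is trivial, so the image of the normal subgroup $\mathfrak{A}_n$ is exactly $\operatorname{Inn}(\mathfrak{A}_n) \simeq \mathfrak{A}_n$; composing with the quotient $\Aut(\mathfrak{A}_n) \to \operatorname{Out}(\mathfrak{A}_n)$ therefore kills $\mathfrak{A}_n$ and factors through $G/\mathfrak{A}_n \simeq \mathbb{Z}/m\mathbb{Z}$. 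By Theorem~\ref{theo:auto A_n}, for $n \geqslant 4$, $n \neq 6$ we have $\Aut(\mathfrak{A}_n) \simeq \mathfrak{S}_n$, whence $\operatorname{Out}(\mathfrak{A}_n) \simeq \mathbb{Z}/2\mathbb{Z}$. Thus I obtain a homomorphism $\psi \colon \mathbb{Z}/m\mathbb{Z} \to \mathbb{Z}/2\mathbb{Z}$, and an element $g \in G$ induces an inner automorphism of $\mathfrak{A}_n$ precisely when $\psi(p(g)) = 0$, where $p \colon G \to \mathbb{Z}/m\mathbb{Z}$ is the projection.

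If $m = 2k+1$ is odd, then $\psi$ is forced to be trivial, so conjugation by every element of $G$ is inner. In particular any lift of the generator $1 \in \mathbb{Z}/m\mathbb{Z}$ induces an inner automorphism, and Proposition~\ref{prop: section and direct product} gives $G \simeq \mathfrak{A}_n \times \mathbb{Z}/m\mathbb{Z}$ at once.

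If $m = 2k$ is even, I split according to $\psi$. When $\psi$ is trivial the previous argument applies verbatim and yields $G \simeq \mathfrak{A}_n \times \mathbb{Z}/m\mathbb{Z}$. When $\psi$ is surjective, I pass to the subgroup $G' = \ker(\psi \circ p)$, which consists of exactly those elements whose conjugation is inner; being a kernel, it is normal in $G$ of index $2$. Then $\mathfrak{A}_n \subseteq G'$ and $p(G') = \ker\psi$ is the order-$k$ cyclic subgroup of $\mathbb{Z}/m\mathbb{Z}$, so $G'$ sits in a short exact sequence $1 \to \mathfrak{A}_n \to G' \to \mathbb{Z}/k\mathbb{Z} \to 0$ in which every element induces an inner automorphism of $\mathfrak{A}_n$. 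Applying Proposition~\ref{prop: section and direct product} once more gives $G' \simeq \mathfrak{A}_n \times \mathbb{Z}/k\mathbb{Z}$, which is the required normal subgroup.

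The only genuinely nontrivial input is the computation $\operatorname{Out}(\mathfrak{A}_n) \simeq \mathbb{Z}/2\mathbb{Z}$, which is precisely Theorem~\ref{theo:auto A_n} combined with the triviality of $Z(\mathfrak{A}_n)$. Granted that, the argument is a clean dichotomy driven by the parity of $m$, and the main point to get right is the bookkeeping that identifies $\ker\psi$ with $\mathbb{Z}/k\mathbb{Z}$ and confirms the normality of $G'$ in $G$. No delicate estimates or explicit calculations are needed.
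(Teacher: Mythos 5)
Your proof is correct and follows essentially the same route as the paper: both reduce to Proposition~\ref{prop: section and direct product} via the inner/outer dichotomy for the conjugation action on $\mathfrak{A}_n$, and in the even case both identify the same index-$2$ normal subgroup (your $\ker(\psi\circ p)$ is exactly the paper's $\langle \mathfrak{A}_n, g^2\rangle$). The only difference is cosmetic: you package the parity argument into the homomorphism $\psi\colon\mathbb{Z}/m\mathbb{Z}\to\operatorname{Out}(\mathfrak{A}_n)\simeq\mathbb{Z}/2\mathbb{Z}$, where the paper argues directly with the parity of the permutation $\sigma\in\mathfrak{S}_n$ inducing the automorphism.
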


\begin{proof}
Denote the homomorphism from $G$ to $\mathbb{Z}/m\mathbb{Z}$ by $p$. Consider an element $g\in G$ such that~$p(g) = 1$. Let us act on $\mathfrak{A}_n$ by conjugating by~$g$. This action induces the automorphism $\alpha$ of~$\mathfrak{A}_n$. By Theorem~\ref{theo:auto A_n}, the automorphism~$\alpha$ is a conjugation by some permutation $\sigma\in\mathfrak{S}_n$.

Let $m$ be odd at first. Since $p(g^m) = 0$, then $g^m$ lies in $\mathfrak{A}_n$, hence~$\alpha^m$~is an inner automorphism, that is, conjugation by an even permutation. Therefore, $\sigma$ is also an even permutation and $\alpha$ is also an inner automorphism. According to Proposition~\ref{prop: section and direct product}, we have an isomorphism $G\simeq\mathfrak{A}_n\times\mathbb{Z}/m\mathbb{Z}$.

Now let $m$ be even, that is, $m = 2k$. If $\sigma$ is an even permutation, then $\alpha$ is an inner automorphism, and according to Proposition~\ref{prop: section and direct product} we have an isomorphism $G \simeq \mathfrak{A}_n \times \mathbb{Z}/m\mathbb{Z}$. If $\sigma$ is an odd permutation, then conjugation by the element $g^2$ already induces an inner automorphism of $\mathfrak{A}_n$. Denote $G' = \langle \mathfrak{A}_n, g^2\rangle$, then we get into conditions of Proposition~\ref{prop: section and direct product} for the exact sequence $$1 \xrightarrow{} \mathfrak{A}_n \xrightarrow{} G' \xrightarrow{} \mathbb{Z}/k\mathbb{Z} \xrightarrow{} 0.$$ Therefore, $G'$ is isomorphic to $\mathfrak{A}_n\times\mathbb{Z}/k\mathbb{Z}$. Also, $G'$ is normal in $G$, since it has index~$2$.\end{proof}

We will also need the following presentation of the group $\mathfrak{A}_5$.

\begin{lemma}\label{lemma: A_5 presentation} 
Consider a group given by generators and relations: $$G=\langle x, y\mid x^5= y^2=(xy)^3 = e\rangle.$$ Then $G$ is isomorphic to the group $\mathfrak{A}_5$, and there is an isomorphism which maps $x$ to the permutation $(12345)$, and $y$ to the permutation $(12)(34)$.
\end{lemma}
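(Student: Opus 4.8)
The plan is to exhibit an explicit surjective homomorphism $\varphi\colon G\to\mathfrak{A}_5$ and then prove the matching bound $|G|\leqslant 60$; together these force $\varphi$ to be an isomorphism carrying the generators as asserted.

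First I would set $x_0=(12345)$ and $y_0=(12)(34)$ in $\mathfrak{A}_5$ and check that they satisfy the three defining relations of $G$: clearly $x_0^5=y_0^2=e$, and a direct computation shows that $x_0y_0$ is a $3$-cycle (one gets $x_0y_0=(135)$), so $(x_0y_0)^3=e$. By the universal property of a group given by generators and relations, there is a homomorphism $\varphi\colon G\to\mathfrak{A}_5$ with $\varphi(x)=x_0$ and $\varphi(y)=y_0$. Its image $\varphi(G)$ contains an element of order $5$ (namely $x_0$) and an element of order $3$ (namely $x_0y_0$), so $|\varphi(G)|$ is divisible by $15$. Every proper subgroup of $\mathfrak{A}_5$ is contained in a maximal one, and the maximal subgroups have orders $12$, $10$ and $6$ (copies of $\mathfrak{A}_4$, $D_{10}$ and $D_6$), none divisible by $15$. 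Hence $\varphi(G)$ cannot be proper, i.e. $\varphi$ is surjective. In particular $|G|\geqslant 60$, and since $\varphi(x)=x_0$ has order $5$ while $x^5=e$, the element $x$ has order exactly $5$, so $\langle x\rangle\simeq\mathbb{Z}/5\mathbb{Z}$.

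Next I would bound $|G|$ from above by $60$. Writing $H=\langle x\rangle$, it suffices to show $[G:H]\leqslant 12$, since then $|G|=|H|\cdot[G:H]\leqslant 5\cdot 12=60$. I would obtain this by a coset enumeration (Todd–Coxeter) of $H$ in $G$: starting from the coset $H$, which is fixed by $x$, one repeatedly applies the generators $x$ and $y$ and closes up the table using the relations $x^5=e$, $y^2=e$ and $(xy)^3=e$, the last of which is conveniently rewritten as $yxy=x^{-1}yx^{-1}$. The enumeration terminates with exactly $12$ cosets, giving $[G:H]\leqslant 12$. Alternatively, one may simply observe that $G$ is the von Dyck (rotation) triangle group of type $(2,3,5)$, which in the spherical case $\tfrac12+\tfrac13+\tfrac15>1$ is finite of order $60$.

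Combining the two bounds yields $|G|=60$, so the surjection $\varphi$ is in fact an isomorphism $G\simeq\mathfrak{A}_5$ sending $x\mapsto(12345)$ and $y\mapsto(12)(34)$, as required. The main obstacle is the upper bound $|G|\leqslant 60$: the presentation does not visibly constrain the size of the group, and one must either carry the coset enumeration through to completion or invoke the classification of the spherical triangle groups; the construction of the surjection and the identification of its image are routine by comparison.
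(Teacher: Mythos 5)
Your proof is correct, and it shares the paper's two-step skeleton: use the relations to get a homomorphism $G\to\mathfrak{A}_5$ sending $x\mapsto(12345)$, $y\mapsto(12)(34)$, then combine surjectivity with the bound $|G|\leqslant 60$ to conclude. The difference lies in where the hard half comes from. The paper simply cites Huppert (Kapitel I, Beispiel 19.9) for the fact that the presentation $\langle x,y\mid x^5=y^2=(xy)^3=e\rangle$ defines a group isomorphic to $\mathfrak{A}_5$, and then only verifies that the two permutations satisfy the relations and generate $\mathfrak{A}_5$; the isomorphism with the prescribed images follows because any surjection between groups of order $60$ is bijective. You instead supply the order bound yourself, either by a Todd--Coxeter enumeration of the cosets of $\langle x\rangle$ (whose termination with $12$ cosets you assert rather than tabulate --- acceptable, since it is a finite mechanical check, but it is the one place where your write-up is a promissory note) or by recognizing $G$ as the spherical $(2,3,5)$ von Dyck group of order $2/(\tfrac12+\tfrac13+\tfrac15-1)=60$. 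Your surjectivity argument is also more robust than the paper's: rather than asserting that $(12345)$ and $(12)(34)$ generate $\mathfrak{A}_5$, you observe that the image has order divisible by $15$ and no proper subgroup of $\mathfrak{A}_5$ (maximal subgroups have orders $12$, $10$, $6$) can accommodate this. The net effect is a self-contained proof at the cost of either an unperformed enumeration or an appeal to triangle-group theory, where the paper trades self-containment for a precise literature citation; both are sound.
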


\begin{proof}
In the example~\cite[Kapitel I, Beispiel 19.9]{Huppert}, it is proved that $G$ is isomorphic to~$\mathfrak{A}_5$. Now note that permutations $(12345)$ and $(12)(34)$ generate the group~$\mathfrak{A}_5$ and satisfy the conditions $$(12345)^5=e,\; ((12)(34))^2 = e,\;((12345)(12)(34))^3 = e.$$ Therefore, the specified isomorphism exists.\end{proof}

\section{Semidirect products}\label{sect: semidirect 1}

In this section, we study the Jordan constant of groups of the form $(\Gamma\times\Gamma)\rtimes\mathbb{Z}/2\mathbb{Z}$, where the nontrivial element of the group $\mathbb{Z}/2\mathbb{Z}$ acts by permutation of factors. The element of the group $(\Gamma \times \Gamma) \rtimes \mathbb{Z}/2\mathbb{Z}$ we write as $g = (g_1, g_2, i)$, where $g_1, g_2 \in \Gamma$, and~$i = 0$ (a trivial element of $\mathbb{Z}/2\mathbb{Z}$), or $i = 1$ (a nontrivial element of $\mathbb{Z}/2\mathbb{Z}$).

Let $g = (g_1, g_2, i), h = (h_1, h_2, j) \in (\Gamma \times \Gamma) \rtimes \mathbb{Z}/2\mathbb{Z}$, then the group operation looks as follows: $$gh = \begin{cases}
   (g_1h_1, g_2h_2, i+j),\; \text{if}\; i = 0;\\
   (g_1h_2, g_2h_1, i+j),\; \text{if}\; i = 1.
 \end{cases} $$

The following three technical lemmas will be needed to prove Theorem~\ref{theo:theo P^1 x P^1}.

\begin{lemma}\label{lemma: JxJsemiZ2}
Let $\Gamma$ be a Jordan group. Then $(\Gamma\times\Gamma) \rtimes\mathbb{Z}/2\mathbb{Z}$ is Jordan (a nontrivial element of the group $\mathbb{Z}/2\mathbb{Z}$ acts by permutation of factors) and the Jordan constant is reached on the group $H$, which is included in the exact sequence $$1 \xrightarrow{} G_1 \times G_2 \xrightarrow{} H \xrightarrow{} \mathbb{Z}/2\mathbb{Z} \xrightarrow{} 0,$$ where $G_1$ and $G_2$ are isomorphic subgroups of $\Gamma$. \end{lemma}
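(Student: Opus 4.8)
The plan is to reduce an arbitrary finite subgroup to one of the asserted form while controlling the Jordan constant at each step. Write $G = (\Gamma\times\Gamma)\rtimes\mathbb{Z}/2\mathbb{Z}$ and let $K = \Gamma\times\Gamma$ be its index-two subgroup, so that $J(K) = J(\Gamma)^2$ by Lemma~\ref{lemma:J_1 x J_2}. For a finite subgroup $F\subseteq G$ put $N = F\cap K$; then $[F:N]\in\{1,2\}$, and $J(N)\le J(\Gamma)^2$ since $N\subseteq K$. Let $\pi_1,\pi_2\colon K\to\Gamma$ be the two projections.

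First I would settle the Jordan property. When $[F:N]=1$ we have $J(F)=J(N)\le J(\Gamma)^2$. When $[F:N]=2$, I take a normal abelian $A\trianglelefteq N$ with $[N:A]=J(N)$; viewing $A$ as an abelian subgroup of $F$ and applying Theorem~\ref{theo: weak jordan constant}, I obtain a characteristic (in particular normal) abelian subgroup of $F$ of index at most $[F:A]^2 = 4J(N)^2\le 4J(\Gamma)^4$. Hence $J(F)\le 4J(\Gamma)^4$ for every finite $F$, so $G$ is Jordan; moreover every $J(F)$ is a positive integer bounded by this fixed number.

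The key step is to enlarge $F$ into a subgroup of the special shape without decreasing the Jordan constant. Suppose $F$ contains an element $g=(a,b,1)$, so $[F:N]=2$; set $G_1=\pi_1(N)$ and $G_2=\pi_2(N)$. A direct computation with the given multiplication rule yields $g(n_1,n_2,0)g^{-1} = (a n_2 a^{-1}, b n_1 b^{-1}, 0)$, so conjugation by $g$ interchanges the two coordinates up to conjugation in $\Gamma$. Because $N\trianglelefteq F$ is preserved by $g$, comparing projections forces $aG_2a^{-1}=G_1$ and $bG_1b^{-1}=G_2$; in particular $G_1\simeq G_2$. These same identities show that conjugation by $g$ carries $G_1\times G_2$ into itself, so $H:=\langle G_1\times G_2,\,g\rangle$ is a finite subgroup of $G$ fitting in the exact sequence $1\to G_1\times G_2\to H\to\mathbb{Z}/2\mathbb{Z}\to 0$ with $G_1\simeq G_2$. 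Since $N\subseteq G_1\times G_2$ and $g\in H$ we have $F\subseteq H$, whence $J(F)\le J(H)$ by Lemma~\ref{lemma:J_1<J_2}.

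Finally I would dominate the subgroups contained in $K$. For $F\subseteq K$ put $A=\pi_1(F)$, $B=\pi_2(F)$, so that $F\subseteq A\times B$ and $J(F)\le J(A)J(B)$. The involution $(e,e,1)$ normalizes both $A\times A$ and $B\times B$ by swapping factors, so $\langle A\times A,(e,e,1)\rangle$ and $\langle B\times B,(e,e,1)\rangle$ are subgroups of the special shape whose Jordan constants are at least $J(A)^2$ and $J(B)^2$ respectively; as $\max(J(A)^2,J(B)^2)\ge J(A)J(B)\ge J(F)$, one of them dominates $F$. Combining both cases, every finite subgroup of $G$ is dominated by a subgroup of the special shape, so $J(G)$ equals the supremum of $J(H)$ over such $H$; since these values form a bounded set of positive integers, the supremum is attained, i.e.\ reached on some $H$ of the stated form. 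The one delicate point is the conjugation computation forcing $aG_2a^{-1}=G_1$, on which the whole reduction rests.
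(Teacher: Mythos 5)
Your proposal is correct and takes essentially the same approach as the paper's proof: the same case split on whether the finite subgroup lies in $\Gamma\times\Gamma$, the same conjugation computation (from normality of $N=F\cap(\Gamma\times\Gamma)$ in $F$) forcing $aG_2a^{-1}=G_1$ and $bG_1b^{-1}=G_2$, and the same enlargements to $\langle G_1\times G_2,\,g\rangle$ and to $(G'\times G')\rtimes\mathbb{Z}/2\mathbb{Z}$ in the diagonal case. The only difference is cosmetic: you spell out, via Theorem~\ref{theo: weak jordan constant} and the boundedness of the integer values $J(F)$, two points the paper treats as obvious, namely the Jordan property itself and the attainment of the supremum.
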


\begin{proof}
Group $(\Gamma \times \Gamma) \rtimes \mathbb{Z}/2\mathbb{Z}$ is obviously Jordan. Let $G \subset (\Gamma \times \Gamma) \rtimes \mathbb{Z}/2\mathbb{Z}$ be a finite subgroup on which the Jordan constant is reached, that is $$J(G) = J((\Gamma \times \Gamma) \rtimes \mathbb{Z}/2\mathbb{Z}).$$ Our goal is to find a finite subgroup $H$ of the required form, with $J(H) \geqslant J(G)$ (note that this condition immediately implies the equality $J(H) = J(G)$, since $J(G)$ equals to the Jordan constant of the entire group). Let $p_1$ and $p_2$ be projections of $\Gamma\times\Gamma$ onto the first and the second factors, respectively.

Let us assume that $G\subset\Gamma\times\Gamma$. Denote $G' = p_1(G)$ and $G'' = p_1(G)$. Then we have an inclusion $G \subset G' \times G''$ which implies an inequality $$J(G')J(G'') = J(G' \times G'') \geqslant J(G).$$ It follows that either $J(G') \geqslant \sqrt{J(G)}$, or $J(G'') \geqslant \sqrt{J(G)}$. Without loss of generality, we can assume that the first case holds, then we can take $H$ equal to $(G'\times G')\rtimes\mathbb{Z}/2\mathbb{Z}$. Indeed, for this group we have $$J(H) \geqslant J(G')^2 \geqslant J(G).$$

Now assume that $G \not\subset \Gamma \times \Gamma$. Denote by  $G^0$ its intersection with $\Gamma \times \Gamma$ and denote projections~\hbox{$G_1 = p_1(G^0)$} and $G_2 = p_2(G^0)$. 
Note that $G_1$ and $G_2$ are conjugate in $\Gamma$. Indeed, $G^0\subset G$ is a normal subgroup. Conjugating $G^0$ by the element $$\gamma = (\gamma_1,\gamma_2,1) \in G\backslash G^0,$$ we obtain: $$G_1 = p_1(G^0) = p_ 1(\gamma G^0\gamma^{-1}) = \gamma_1p_2(G^0)\gamma_1^{-1} = \gamma_1G_2\gamma_1^{-1}.$$ Similarly we have $G_2 = \gamma_2G_1\gamma_2^{-1}$. Then we can take $H$ equals to $\langle G_1 \times G_2, \gamma \rangle$. Indeed, $H$ is included in the exact sequence $$1 \xrightarrow{} G_1 \times G_2 \xrightarrow{} H \xrightarrow{} \mathbb{Z}/2\mathbb{Z} \xrightarrow{} 0,$$ where $G_1$ and $G_2$ are isomorphic, since they are conjugate in $\Gamma$. Also $G$ is a subgroup of~$H$, therefore $J(H) \geqslant J(G)$. \end{proof}

\begin{lemma}\label{lemma:GxGsemiZ2} Let $G$ be a nontrivial finite group. Then $$ J((G\times G) \rtimes \mathbb{Z}/2\mathbb{Z}) = 2J(G)^2,$$ where the nontrivial element of the group $\mathbb{Z}/2\mathbb{Z}$ acts by permutation of factors.
\end{lemma}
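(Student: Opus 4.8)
The plan is to compute $J((G\times G)\rtimes\mathbb{Z}/2\mathbb{Z})$ by exhibiting an abelian subgroup of the correct index and then showing no abelian subgroup can do better. For the upper bound on the Jordan constant, I would take a normal abelian subgroup $A\subseteq G$ of index $J(G)$ (which exists by definition of $J(G)$). Then $A\times A$ is an abelian subgroup of $G\times G$, and since the $\mathbb{Z}/2\mathbb{Z}$-action permutes the two factors, $A\times A$ is preserved by this action and hence is normal in $(G\times G)\rtimes\mathbb{Z}/2\mathbb{Z}$. The index of $A\times A$ in the whole group is $[G:A]^2\cdot 2 = 2J(G)^2$. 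However, $A\times A$ need not be normal-abelian-of-minimal-index in the semidirect product directly giving the Jordan constant, so I must be a little careful: what this shows is $J\big((G\times G)\rtimes\mathbb{Z}/2\mathbb{Z}\big)\leqslant 2J(G)^2$, because the Jordan constant is the smallest index of a normal abelian subgroup, and I have produced one of index $2J(G)^2$.

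For the lower bound, I would produce a finite subgroup (in this case the whole group, which is already finite) and show that any normal abelian subgroup $N$ has index at least $2J(G)^2$. Suppose $N\trianglelefteq (G\times G)\rtimes\mathbb{Z}/2\mathbb{Z}$ is abelian. Intersecting with the normal subgroup $G\times G$, set $N^0 = N\cap(G\times G)$; this has index at most $2$ in $N$. The projections $p_1(N^0)$ and $p_2(N^0)$ are abelian subgroups of $G$, so each has order at most $|G|/J(G)$; more precisely, $N^0\subseteq p_1(N^0)\times p_2(N^0)$ forces $|N^0|\leqslant (|G|/J(G))^2$ only if the projections are themselves abelian, which they are since $N^0$ is abelian. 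Hence $|N|\leqslant 2\,(|G|/J(G))^2$, giving $[\,(G\times G)\rtimes\mathbb{Z}/2\mathbb{Z} : N\,]\geqslant \frac{2|G|^2}{2(|G|/J(G))^2} = J(G)^2$. This bound is off by a factor of $2$ from what I want, so the naive argument is too weak, and this discrepancy is the main obstacle.

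The hard part will be closing this factor-of-$2$ gap, and I expect it to require a structural argument showing that a normal abelian $N$ cannot meet the ``swap'' coset too efficiently. The key observation is that if $N$ contains an element $(g_1,g_2,1)$ of swap type, then because $N$ is abelian and normal, commuting this element with elements of $G\times G$ imposes strong constraints: conjugation by $(h_1,h_2,0)$ sends $(g_1,g_2,1)$ to $(h_1 g_1 h_2^{-1}, h_2 g_2 h_1^{-1},1)$, and demanding $N$ be normal abelian forces the diagonal-type restrictions on $N^0$. Concretely, I would argue that $N^0$ must lie in a ``diagonal-compatible'' abelian subgroup, so that $p_1(N^0)$ and $p_2(N^0)$ cannot both be as large as $|G|/J(G)$ independently; rather the presence of a swap element collapses one factor's worth of freedom, recovering the missing factor of $2$. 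Alternatively, and more cleanly, I would invoke Lemma~\ref{lemma: JxJsemiZ2} to reduce to groups $H$ in the stated exact sequence and analyze the two cases (whether $H\subseteq G\times G$ or not) directly, computing $J(H)$ in each case and checking that the maximum is exactly $2J(G)^2$; the delicate case is when $H$ meets the swap coset, where I must verify that the best normal abelian subgroup is precisely a diagonal copy combined with the center-type data, yielding index $2J(G)^2$ and not merely $J(G)^2$.
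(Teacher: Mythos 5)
Your route is the same as the paper's: exhibit $A\times A$ for the upper bound, then split according to whether a normal abelian subgroup $N$ meets the swap coset, and in the latter case collapse $N^0=N\cap(G\times G)$ to a single factor. The mechanism you gesture at is exactly the paper's, and it is worth making precise: the constraint comes from $N$ being abelian (each $h=(h_1,h_2,0)\in N^0$ must commute with the swap element $g=(g_1,g_2,1)\in N$ --- not, as you write, with elements of $G\times G$), which forces $h_1=g_1h_2g_1^{-1}$ and $h_2=g_2h_1g_2^{-1}$; hence the first projection is injective on $N^0$, so $|N^0|=|p_1(N^0)|$ and the index equals $|G|\cdot[G:p_1(N^0)]\geqslant |G|\cdot J(G)$. (In the other case, $N\subseteq G\times G$, there is in fact no factor-of-$2$ deficit: $[G\times G:N]\geqslant J(G\times G)=J(G)^2$ by Lemma~\ref{lemma:J_1 x J_2}, and the ambient index $2$ does the rest; your lumped bound $|N|\leqslant 2(|G|/J(G))^2$ obscures this.) But note a genuine flaw in your intermediate step: the bound $|p_i(N^0)|\leqslant |G|/J(G)$ does \emph{not} follow from the projections being abelian, since $J(G)$ controls only \emph{normal} abelian subgroups --- e.g.\ $\mathfrak{A}_5$ has $J(\mathfrak{A}_5)=60=|\mathfrak{A}_5|$, so $|G|/J(G)=1$, yet it has abelian subgroups of order $5$. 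You need that $N^0$ is normal in $G\times G$ (true, as $N$ is normal in the whole group) and that the projection of a normal subgroup of $G\times G$ is normal in $G$.

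The second gap is the step that would actually fail as planned: even after the diagonal collapse, the swap-case bound is $|G|\cdot J(G)$, and $|G|\cdot J(G)\geqslant 2J(G)^2$ if and only if $|G|\geqslant 2J(G)$, i.e.\ $J(G)<|G|$. So your ``recovered factor of $2$'' silently assumes $G$ has a nontrivial normal abelian subgroup. When $J(G)=|G|$ (e.g.\ $G$ nonabelian simple) the estimate yields only $J(G)^2$, and the paper opens with a separate case for precisely this reason: there one checks directly that $(G\times G)\rtimes\mathbb{Z}/2\mathbb{Z}$ has no nontrivial normal abelian subgroup at all --- in the swap case $N$ would be generated by a single element $(g_1,g_1^{-1},1)$ of order $2$, and conjugating by $(h,e,0)$ gives $(hg_1,\,g_1^{-1}h^{-1},\,1)$, which lies in $N$ only for $h=e$, contradicting normality since $G$ is nontrivial. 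Finally, your fallback of invoking Lemma~\ref{lemma: JxJsemiZ2} buys nothing here: for finite $G$ it merely returns you to the same two cases, so the direct analysis is unavoidable. With the normality justification and the separate case $J(G)=|G|$ added, your plan becomes the paper's proof.
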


\begin{proof}
Firstly, assume that $J(G) = |G|$. In this case, there are no nontrivial normal abelian subgroups in $G$. Since $G$ is a non-trivial group, the specified semidirect product is not a direct one, and it is easy to see that in this case the group $(G \times G) \times \mathbb{Z}/2\mathbb{Z}$ also does not contain non-trivial normal abelian subgroups, that is, $$J((G \times G) \times \mathbb{Z}/2\mathbb{Z}) = 2J(G)^2.$$

Now assume that $J(G) \neq |G|$. Let $A\subset G$ be a normal abelian subgroup such that~\mbox{$[G:A]= J(G)$}. Then $$A\times A\subset(G\times G)\rtimes\mathbb{Z}/2\mathbb{Z}$$ is a normal abelian subgroup of index $2J(G)^2$. It remains to show that there are no normal abelian subgroups of smaller index.

Let $H \subset (G \times G) \rtimes \mathbb{Z}/2\mathbb{Z}$ be a normal abelian subgroup. If $H$ is contained in $G\times G$, then we have $[G\times G :H] \geqslant J(G)^2$ by Lemma~\ref{lemma:J_1 x J_2}. Therefore, $$[(G \times G) \rtimes \mathbb{Z}/2\mathbb{Z} : H] \geqslant 2J(G)^2.$$

If $H$ is not contained in $G\times G$, then we denote by $H^0$ the intersection of $H$ and~$G\times G$. Then $H^0$ is a normal abelian subgroup of $G\times G$ and $H=\langle H^0, g\rangle$, where~\mbox{$g=(g_1,g_2,1)\in H\backslash H^0.$} Let $p_1$ and $p_2$ be projections of $G\times G$ on the first and the second factors, respectively. Denote $H_1 = p_1(H^0)$ and $H_2=p_2(H^0)$. Then~$H_1$ and~$H_2$ are normal abelian subgroups in $G$, thus $[G:H_1] \geqslant J(G)$ and $[G:H_2] \geqslant J(G)$.

Since $H$ is abelian, then any element $$h = (h_1,h_2,0) \in H,$$ where $h_1 \in H_1, h_2 \in H_2$, commutes with $g$: $$(h_1,h_2,0)(g_1,g_2,1) = (g_1,g_2,1)(h_1,h_2,0).$$ After multiplication we get $$(h_1g_1,h_2g_2,1)=(g_1h_2,g_2h_1,1).$$ Therefore
$h_1 = g_1h_2g_1^{-1}$ and $h_2 = g_2h_1g_2^{-1}$ and the map $h_1 \mapsto (h_1,g_2h_1g_2^{-1},0)$ defines an isomorphism between $H_1$ and~$H^0$. Then we have $$[(G \times G) \rtimes \mathbb{Z}/2\mathbb{Z}:H] = [G \times G: H^0] = [G:H_1]\cdot|G| \geqslant J(G)\cdot|G|.$$ Since $J(G)\neq|G|$, then $|G|\geqslant 2J(G)$, and we get an estimate for the index $$[(G \times G) \rtimes \mathbb{Z}/2\mathbb{Z}:H] \geqslant 2J(G)^2.$$ 

As a result, we presented a normal abelian subgroup of index $2J(G)^2$ and showed that there are no normal abelian subgroups of smaller index, thereby the lemma is proved. \end{proof}

Now we prove the main lemma of this section, which we will apply in the proof of Theorem~\ref{theo:theo P^1 x P^1}.

\begin{lemma}\label{lemma:Gamma X Gamma X Z/2Z}
    Let $\Gamma$ be a Jordan group, containing a nontrivial finite subgroup. Then $$J((\Gamma \times \Gamma) \rtimes \mathbb{Z}/2\mathbb{Z}) = 2J(\Gamma)^2.$$
\end{lemma}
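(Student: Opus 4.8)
The plan is to combine the two lemmas that precede this statement with the structural control provided by Lemma~\ref{lemma:J_1 x J_2}. The target equality $J((\Gamma\times\Gamma)\rtimes\mathbb{Z}/2\mathbb{Z})=2J(\Gamma)^2$ splits naturally into two inequalities, and each direction should be handled separately.

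For the lower bound $J((\Gamma\times\Gamma)\rtimes\mathbb{Z}/2\mathbb{Z})\geqslant 2J(\Gamma)^2$, I would first choose, for an arbitrary $\varepsilon$, a finite subgroup $G\subset\Gamma$ with $J(G)$ as close to $J(\Gamma)$ as desired; since $\Gamma$ is Jordan and contains a nontrivial finite subgroup, the supremum defining $J(\Gamma)$ is finite and attained on some finite subgroup $G\subset\Gamma$ (Jordan groups attain their constant on a single finite subgroup). Then $(G\times G)\rtimes\mathbb{Z}/2\mathbb{Z}$ is a finite subgroup of $(\Gamma\times\Gamma)\rtimes\mathbb{Z}/2\mathbb{Z}$, and by Lemma~\ref{lemma:GxGsemiZ2} its Jordan constant equals $2J(G)^2=2J(\Gamma)^2$. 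This immediately gives the desired lower bound.

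For the upper bound $J((\Gamma\times\Gamma)\rtimes\mathbb{Z}/2\mathbb{Z})\leqslant 2J(\Gamma)^2$, I would invoke Lemma~\ref{lemma: JxJsemiZ2}, which tells us that the Jordan constant is attained on a finite subgroup $H$ fitting into an exact sequence
$$1\longrightarrow G_1\times G_2\longrightarrow H\longrightarrow\mathbb{Z}/2\mathbb{Z}\longrightarrow 0,$$
where $G_1,G_2\subset\Gamma$ are isomorphic finite subgroups. Writing $G$ for the common isomorphism type, the group $H$ is a subgroup of $(G\times G)\rtimes\mathbb{Z}/2\mathbb{Z}$ (realizing the permutation action), so by Lemma~\ref{lemma:J_1<J_2} and Lemma~\ref{lemma:GxGsemiZ2} we get
$$J(H)\leqslant J((G\times G)\rtimes\mathbb{Z}/2\mathbb{Z})=2J(G)^2\leqslant 2J(\Gamma)^2,$$
where the last inequality uses $J(G)\leqslant J(\Gamma)$ since $G$ is a finite subgroup of $\Gamma$.

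The main point requiring care is the bookkeeping in the upper bound: Lemma~\ref{lemma: JxJsemiZ2} only guarantees that $G_1$ and $G_2$ are isomorphic subgroups of $\Gamma$, so one must verify that $H$ genuinely embeds into $(G\times G)\rtimes\mathbb{Z}/2\mathbb{Z}$ with the swap action, rather than into some twisted form; the conjugation relations $G_1=\gamma_1 G_2\gamma_1^{-1}$ and $G_2=\gamma_2 G_1\gamma_2^{-1}$ recorded in the proof of Lemma~\ref{lemma: JxJsemiZ2} furnish exactly the identification needed. Once this embedding is set up, the estimate is a direct application of the already-established lemmas, and the two inequalities combine to yield the claimed equality. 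The hypothesis that $\Gamma$ contains a nontrivial finite subgroup is used to ensure $2J(\Gamma)^2$ is the correct value rather than a degenerate one, matching the nontriviality assumption in Lemma~\ref{lemma:GxGsemiZ2}.
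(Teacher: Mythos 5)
Your proposal is correct, and its overall skeleton (split into two inequalities; lower bound via Lemma~\ref{lemma:GxGsemiZ2} applied to a nontrivial finite $G\subset\Gamma$ attaining $J(G)=J(\Gamma)$; reduction of the upper bound to subgroups $H$ of the special form via Lemma~\ref{lemma: JxJsemiZ2}) matches the paper. Where you genuinely diverge is the final step of the upper bound. The paper does \emph{not} embed $H$ into $(G\times G)\rtimes\mathbb{Z}/2\mathbb{Z}$; instead it works inside $H$ directly: it picks a normal abelian subgroup $N_1\subset G_1$ with $[G_1:N_1]=J(G_1)$, transports it to $N_2=\gamma_2N_1\gamma_2^{-1}\subset G_2$, and verifies by hand that $N_1\times N_2$ is normal abelian in $H$ of index $2[G_1:N_1][G_2:N_2]\leqslant 2J(\Gamma)^2$ (the key point being $\gamma_1\gamma_2\in G_1$ because $\gamma^2\in G_1\times G_2$). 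Your route instead reuses the finite-case Lemma~\ref{lemma:GxGsemiZ2} through an embedding, which is slicker but requires the untwisting you flag: to make it airtight, conjugate $H$ inside the ambient group by $t=(e,\gamma_1,0)$; using $G_1=\gamma_1G_2\gamma_1^{-1}$ one checks $tH^0t^{-1}=G_1\times G_1$ and $t\gamma t^{-1}=(e,\gamma_1\gamma_2,1)$ with $\gamma_1\gamma_2\in G_1$, so $tHt^{-1}\subset\langle G_1\times G_1,(e,e,1)\rangle\simeq (G_1\times G_1)\rtimes\mathbb{Z}/2\mathbb{Z}$ with the swap action, whence $J(H)\leqslant 2J(G_1)^2\leqslant 2J(\Gamma)^2$. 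Your sketch asserts this identification rather than performing it, so the one computation above is the only missing detail; you should also note the degenerate case $G_1=\{e\}$ (then $|H|\leqslant 2$ and $J(H)=1$, which is harmless, while Lemma~\ref{lemma:GxGsemiZ2} itself needs $G$ nontrivial). In exchange for that conjugation computation your argument avoids repeating a normality verification that the paper carries out explicitly for $N_1\times N_2$, so the two proofs trade one small computation for another; both are sound.
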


\begin{proof}
For finite groups, this assertion is proved in Lemma~\ref{lemma:GxGsemiZ2}. Let~$\Gamma$ be an infinite group.
First, let us show that $$J((\Gamma \times \Gamma) \rtimes \mathbb{Z}/2\mathbb{Z}) \geqslant 2J(\Gamma)^2.$$ To do this, we need to find a finite subgroup of $(\Gamma \times \Gamma) \rtimes \mathbb{Z}/2\mathbb{Z}$ with Jordan constant equal to~$2J(\Gamma)^2$.

Consider finite subgroup $G \subset \Gamma$ on which the Jordan constant is reached, that is~\mbox{$J(G) = J(\Gamma)$} (we can assume, that $G$ is nontrivial, since there exist nontrivial finite subgroups in $\Gamma$). Denote $$\tilde G = (G \times G) \rtimes \mathbb{Z}/2\mathbb{Z} \subset (\Gamma \times \Gamma) \rtimes \mathbb{Z}/2\mathbb{Z}.$$ We have $J(\tilde G) = 2J(G)^2 = 2J(\Gamma)^2$ by Lemma~\ref{lemma:GxGsemiZ2}.

Now let us show, that there are no finite subgroups with larger Jordan constant. By Lemma~\ref{lemma: JxJsemiZ2}, it is enough to prove this for finite subgroups $H$ included in the exact sequence $$1 \xrightarrow{} G_1 \times G_2 \xrightarrow{} H \xrightarrow{} \mathbb{Z}/2\mathbb{Z} \xrightarrow{} 0.$$

Denote the homomorphism from $H$ to $\mathbb{Z}/2\mathbb{Z}$ by $p$. Choose an element~$\gamma\in H$ such that~$p(\gamma) = 1$. Then $\gamma$ can be written as~$$\gamma = (\gamma_1, \gamma_2, 1).$$ Let us conjugate the normal subgroup $G_1\times G_2$ by~$\gamma$, then we obtain the equalities: $$G_ 1 = \gamma_1G_2\gamma_1^{-1}, \quad G_2 = \gamma_2G_1\gamma_2^{-1}.$$ Let $N_1$ be a normal abelian subgroup of $G_1$ on which the Jordan constant is reached. Then the Jordan constant of the group $G_2$ is reached on the subgroup~$N_2 = \gamma_2 N_1 \gamma_2^{-1}$. In particular, the following inequalities hold: $$[G_1:N_1] = J(G_1) \leqslant J(\Gamma), \quad [G_2:N_2] = J(G_2) \leqslant J(\Gamma).$$

The subgroup $N_1\times N_2$ is obviously normal in $G_1\times G_2$. Let us show that it is normal in~$H$. It is enough to check that $N_1\times N_2$ is normalized by the element $\gamma$. Take an element~\mbox{$n = (n_1, n_2, 0) \in N_1 \times N_2$} and conjugate by $\gamma$: 
$$\gamma n \gamma^{-1} =(\gamma_1, \gamma_2, 1)(n_1, n_2, 0)(\gamma^{-1}_2, \gamma^{-1}_1, 1) = (\gamma_1 n_2 \gamma_1^{-1}, \gamma_2 n_1 \gamma_2^{-1}, 0).$$ Note that $\gamma_2 n_1 \gamma_2^{-1} \in N_2$ by definition of a subgroup $N_2$. Also, by definition of a subgroup~$N_2$, there exist an element  $n'_1 \in N_1$ such that $n_2 = \gamma_2 n'_1 \gamma_2^{-1}$. Therefore $$\gamma_1 n_2 \gamma_1^{-1} = \gamma_1 \gamma_2 n'_1 \gamma_2^{-1} \gamma_1^{-1} \in N_1,$$ since $\gamma_1\gamma_2$ lies in $G_1$, since $\gamma^2 \in G_1 \times G_2$. Hence $$\gamma n \gamma^{-1} = (\gamma_1 n_2 \gamma_1^{-1}, \gamma_2 n_1 \gamma_2^{-1}, 0) \in N_1 \times N_2,$$ and $N_1 \times N_2$ is a normal abelian subgroup of $H$ of index $$[H:N_1\times N_2] = 2\cdot[G_1:N_1]\cdot[G_2:N_2] \leqslant 2J(\Gamma)^2.$$ Therefore $J(H)\leqslant 2J(\Gamma)^2$, and the lemma is proved. \end{proof}


\section{\boldmath{Jordan constants of a group $\text{Aut}(\mathbb{P}^1_K \times \mathbb{P}^1_K)$}}\label{sect:P1xP1}

The automorphism group of the surface $\mathbb{P}^1_K\times\mathbb{P}^1_K$ has an explicit description:$$\Aut(\mathbb{P}^1_K \times \mathbb{P}^1_K) \simeq (\PGL_2(K) \times \PGL_2(K)) \rtimes \mathbb{Z}/2\mathbb{Z},$$ where the nontrivial element of the group $\mathbb{Z}/2\mathbb{Z}$ acts by permutation of the factors. Therefore, to study the Jordan constants of the group $\text{Aut}(\mathbb{P}^1_K\times\mathbb{P}^1_K)$, it is necessary to understand which finite subgroups does group $\PGL_2(K)$ contain.

If the field $K$ is algebraically closed, it is well known that the finite subgroups of the group $\PGL_2(K)$ are $\mathbb{Z}/n\mathbb{Z}$, $D_{2n}\;($\text{for}\ $n\geqslant 2)$, $\mathfrak{A}_4$, $\mathfrak{S}_4$ and $\mathfrak{A}_5$. If $K$ is an arbitrary, all finite subgroups of the group $\PGL_2(K)$ occur in the above list because $\PGL_2(K)$ is a subgroup of $\PGL_2(\overline{K})$, however, no one guarantees that all groups in the list are realized as finite subgroups in $\PGL_2(K)$.

We are working with an arbitrary field $K$ of characteristic zero.
In this case, it is also well known which finite groups are realized as subgroups of~$\PGL_2(K)$ depending on the arithmetic properties of the field.

\begin{propos}[{\cite[Proposition 1.1]{Bea}}]\label{prop:Beauville}
Let $K$ be a field of characteristic $0$ and $\xi_m$ be a primitive $m$-th root of unity.
\begin{enumerate}
    \item $\emph{PGL}_2(K)$ contains {$\mathbb{Z}/m\mathbb{Z}$}, $D_{2m}$ if and only if $K$ contains~$\xi_m + \xi_m^{-1}$ $($in particular,  $\emph{PGL}_2(K)$ always contains $D_6$, $D_8$ и $D_{12}$$)$;
    \item $\emph{PGL}_2(K)$ contains $\mathfrak{A}_4$, $\mathfrak{S}_4$ if and only if $-1$ is a sum of two squares in $K$;
    \item $\emph{PGL}_2(K)$ contains $\mathfrak{A}_5$ if and only if $-1$ is a sum of two squares in $K$, and $K$ contains $\sqrt{5}$.
\end{enumerate}
\end{propos}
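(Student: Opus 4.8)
The plan is to analyze finite subgroups of $\PGL_2(K)$ through their preimages in $\GL_2(K)$, using that in characteristic $0$ every finite-order element is semisimple. If $A \in \GL_2(K)$ has image of order $m$ in $\PGL_2(K)$, its eigenvalues $\lambda, \mu \in \overline{K}$ satisfy that $\zeta = \lambda/\mu$ is a primitive $m$-th root of unity, and the only invariant surviving the passage to $\PGL_2$ is $\tr(A)^2/\det(A) = \zeta + \zeta^{-1} + 2$. Since $A$ has entries in $K$, this forces $\xi_m + \xi_m^{-1} \in K$, giving the easy direction of (1). For the converse I would set $s = \xi_m + \xi_m^{-1}$ and take the companion matrix of $x^2 - (s+2)x + (s+2)$; a direct computation shows its eigenvalue ratio $\omega$ satisfies $\omega + \omega^{-1} = s$, so its image has order exactly $m$, proving $\mathbb{Z}/m\mathbb{Z} \subset \PGL_2(K)$.

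For the dihedral part of (1), it suffices to invert the cyclic generator by an involution defined over $K$. Writing the order-$m$ element as the companion matrix $C = \begin{pmatrix} 0 & -d \\ 1 & t \end{pmatrix}$, the explicit matrix $S = \begin{pmatrix} 0 & d \\ 1 & 0 \end{pmatrix} \in \GL_2(K)$ satisfies $S^2 = dI$ and $SCS^{-1} = dC^{-1}$, so its image is an involution conjugating the cyclic generator to its inverse. Hence $\langle C, S\rangle \simeq D_{2m}$, and dihedral groups occur under exactly the same condition as cyclic ones; in particular $D_6, D_8, D_{12}$ are always present because $\xi_3+\xi_3^{-1}=-1$, $\xi_4+\xi_4^{-1}=0$, $\xi_6+\xi_6^{-1}=1$ are rational.

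For (2) I would pass to quaternion algebras. Three pairwise anticommuting trace-zero elements $i,j,k=ij$ of $M_2(K)$, each squaring to $-1$, generate a copy of the quaternion algebra $(-1,-1)_K$, and their images in $\PGL_2(K)$ form a Klein four-group on which an order-$3$ element of $\mathfrak{A}_4$ acts by cyclic permutation. Such elements exist in $M_2(K)$ precisely when $(-1,-1)_K$ splits, that is, when the form $x^2+y^2$ represents $-1$, i.e. $-1$ is a sum of two squares in $K$. Given the splitting, the cyclic permutation $i\mapsto j\mapsto k\mapsto i$ is a $K$-algebra automorphism of $M_2(K)$, hence inner by Skolem–Noether; conjugation by the implementing matrix supplies the order-$3$ element and yields $\mathfrak{A}_4 \subset \PGL_2(K)$. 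The same device handles $\mathfrak{S}_4$: the automorphism $i\mapsto i$, $j\mapsto k$, $k\mapsto -j$ is again inner, and the corresponding element of $\PGL_2(K)$ extends $\mathfrak{A}_4$ to $\mathfrak{S}_4$ with no further hypothesis on $K$, so $\mathfrak{A}_4$ and $\mathfrak{S}_4$ are realized under the identical condition.

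For (3), necessity is immediate: $\mathfrak{A}_5 \supset \mathfrak{A}_4$ forces $-1$ to be a sum of two squares, and a $5$-cycle forces $\mathbb{Z}/5\mathbb{Z} \subset \PGL_2(K)$, i.e. $\xi_5 + \xi_5^{-1} = \tfrac{\sqrt 5 - 1}{2} \in K$, which is equivalent to $\sqrt 5 \in K$. For sufficiency I would realize the binary icosahedral group $\mathrm{SL}_2(\mathbb{F}_5)$ inside the now-split algebra $(-1,-1)_K$: its order-$5$ generator is a golden-ratio quaternion whose entries lie in $K$ exactly because $\sqrt 5 \in K$, and reduction modulo $\pm 1$ gives $\mathfrak{A}_5 \subset \PGL_2(K)$. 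To make this rigorous I would invoke the presentation $\langle x,y \mid x^5=y^2=(xy)^3=e\rangle$ of Lemma~\ref{lemma: A_5 presentation} and exhibit explicit matrices $x,y$ of orders $5$ and $2$ with $(xy)^3=e$. The main obstacle is precisely this last step: producing generators that satisfy all three relations simultaneously, not merely elements of the correct orders, which requires the explicit icosahedral matrices (the content of Section~\ref{sect: semidirect 2}) together with a check that the subgroup they generate is neither smaller nor larger than $\mathfrak{A}_5$.
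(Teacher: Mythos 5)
The paper never proves this proposition---it is imported verbatim from Beauville \cite[Proposition 1.1]{Bea}---so there is no internal proof to match you against; your route (the conjugation invariant $\tr^2/\det$ for (1), the quaternion algebra $(-1,-1)_K$ plus Skolem--Noether for (2), icosian quaternions for (3)) is essentially the standard argument of the cited source. Part (1) is correct and complete up to trivia: the identities $S^2=dI$ and $SCS^{-1}=dC^{-1}$ check out, and the only points you should add are that $\zeta+\zeta^{-1}\in K$ for \emph{some} primitive $m$-th root $\zeta$ implies $\xi_m+\xi_m^{-1}\in K$ (because $\mathbb{Q}(\xi_m+\xi_m^{-1})/\mathbb{Q}$ is Galois, so one conjugate of a generator being in $K$ puts the whole field in $K$), and the degenerate case $m=2$, where $s+2=0$ kills your companion matrix but $\mathrm{diag}(1,-1)$ serves. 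The Skolem--Noether constructions in (2) are also sound and give the sufficiency of both $\mathfrak{A}_4$ and $\mathfrak{S}_4$ correctly.

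The genuine gap is the \emph{necessity} direction of (2). A Klein four-group exists in $\PGL_2(K)$ over every field of characteristic $0$ (it is $D_4$, which your own part (1) always provides), so the mere presence of the four-group inside $\mathfrak{A}_4$ proves nothing; and your phrase ``such elements exist in $M_2(K)$ precisely when $(-1,-1)_K$ splits'' quantifies over lifts squaring exactly to $-1$, which an arbitrary copy of $\mathfrak{A}_4\subset\PGL_2(K)$ is not yet known to admit. The missing step is exactly where the order-$3$ element earns its keep: lifts $i,j\in\GL_2(K)$ of two distinct commuting involutions are trace-zero and anticommute, with $i^2=aI$, $j^2=bI$, $(ij)^2=-abI$; a lift $T$ of the $3$-cycle satisfies $TiT^{-1}=\lambda j$, $TjT^{-1}=\mu(ij)$ with $\lambda,\mu\in K^*$, and squaring these relations gives $a\equiv b$ and $b\equiv -ab$, hence $a\equiv b\equiv -1$ modulo $(K^*)^2$; after rescaling, $i^2=j^2=-I$, so the $4$-dimensional subalgebra $K\langle i,j\rangle\simeq(-1,-1)_K$ equals $M_2(K)$ and is split, forcing $-1$ to be a sum of two squares. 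Without this, the ``only if'' of (2)---and of (3), which you reduce to it---is unproved. Separately, you leave the sufficiency of (3) unfinished by your own admission; it does close along the lines you indicate, either by the classical fact that the $120$ icosian units have coordinates in $\frac{1}{2}\mathbb{Z}[\sqrt{5}]$, so that $2I\simeq\mathrm{SL}_2(\mathbb{F}_5)$ embeds in $(-1,-1)_K^\times$ once $\sqrt{5}\in K$ and the algebra splits, with image in $\PGL_2(K)$ isomorphic to $\mathfrak{A}_5$ since $2I\cap K^*=\{\pm 1\}$, or more concretely by specializing the paper's own Proposition~\ref{prop:A_5 action} to $b=d=0$ (i.e.\ $a^2+c^2=-1$ with $a,c\in K$): those matrices lie in $\PGL_2(K)$, satisfy $A^2=C^5=(CA)^3=e$, and generate $\mathfrak{A}_5$ by Lemma~\ref{lemma: A_5 presentation} and the simplicity of $\mathfrak{A}_5$.
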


\begin{rem}\label{rem:D2n,A4,S4,A5}
We have $$J(\mathbb{Z}/n\mathbb{Z}) = 1,\; J(D_{4}) = 1,\; J(D_{2n}) = 2\; 
\text{for}\; n\geqslant 3,$$ $$ J(\mathfrak{A}_4) = 3,\; J(\mathfrak{S}_4) = 6, J(\mathfrak{A}_5) = 60.$$
In particular, it can be seen from here that if the group $\PGL_2(K)$ does not contain $\mathfrak{A}_5$, then the Jordan constant $J(\PGL_2(K))$ does not exceed $6$.

Also note that if the group $G$ is isomorphic to one of these groups, then there is a characteristic abelian subgroup $A\subset G$ such that $[G:A] = J(G)$. That is, for any finite subgroup $H\subset\PGL_2(K)$, the Jordan constant of the group $H$ is reached on a characteristic subgroup. \end{rem}

From Proposition~\ref{prop:Beauville} we obtain an obvious corollary about the Jordan constants of the group~$\PGL_2(K)$.

\begin{sled}\label{sled:J(PGL_2)}
Let $K$ be a field of characteristic $0$.
\begin{enumerate}
        \item $J(\PGL_2(K)) = 60$ if and only if $\sqrt{5} \in K$ and $-1$ is a sum of two squares in $K$; 
        \item $J(\PGL_2(K)) = 6$ if and only if $\sqrt{5} \not\in K$ and $-1$ is a sum of two squares in $K$;
        \item $J(\PGL_2(K)) = 2$ if and only if $-1$ is not a sum of two squares in $K$.
\end{enumerate}
\end{sled}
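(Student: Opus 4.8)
The plan is to read off $J(\PGL_2(K))$ directly from the list of possible finite subgroups in Proposition~\ref{prop:Beauville}, combined with the Jordan constants of those groups recorded in Remark~\ref{rem:D2n,A4,S4,A5}. By Lemma~\ref{lemma:J_1<J_2}, for any group $\Gamma$ we have $J(\Gamma)=\sup_{G}J(G)$ over finite subgroups $G$, so the task is simply to compute the supremum of $J(G)$ as $G$ ranges over the finite subgroups of $\PGL_2(K)$ permitted by the arithmetic of $K$. Since the abstract isomorphism type of every finite subgroup of $\PGL_2(\overline K)$ lies in the list $\mathbb Z/n\mathbb Z$, $D_{2n}$, $\mathfrak A_4$, $\mathfrak S_4$, $\mathfrak A_5$, and the Jordan constants of these are $1,2,3,6,60$ respectively (with the small exceptions $J(\mathbb Z/n\mathbb Z)=1$, $J(D_4)=1$ noted in the remark), the maximum is governed entirely by which of $\mathfrak A_5$, $\mathfrak S_4$ actually occur.

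The argument then splits into the three mutually exclusive arithmetic cases of the statement. First I would treat case (1): if $\sqrt 5\in K$ and $-1$ is a sum of two squares in $K$, then part~(3) of Proposition~\ref{prop:Beauville} guarantees $\mathfrak A_5\subset\PGL_2(K)$, so $J(\PGL_2(K))\geqslant J(\mathfrak A_5)=60$; conversely $60$ is the largest Jordan constant appearing in the list, so $J(\PGL_2(K))=60$. For case (2), where $\sqrt 5\notin K$, part~(3) of Proposition~\ref{prop:Beauville} shows $\mathfrak A_5$ is \emph{not} a subgroup, so the largest possible finite subgroup contributing is $\mathfrak S_4$, which by part~(2) of the Proposition is present precisely because $-1$ is a sum of two squares in $K$; hence $J(\PGL_2(K))=J(\mathfrak S_4)=6$. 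For case (3), if $-1$ is not a sum of two squares in $K$, then by parts~(2) and~(3) neither $\mathfrak S_4$ nor $\mathfrak A_5$ (nor $\mathfrak A_4$) occurs, so only cyclic and dihedral subgroups remain; among these the dihedral groups $D_{2n}$ with $n\geqslant 3$ give the largest Jordan constant $2$, and part~(1) guarantees $D_6\subset\PGL_2(K)$ always, so $J(\PGL_2(K))=2$.

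The only point requiring a little care is the logical ``if and only if'' structure: one must check that the three listed conditions are exhaustive and pairwise incompatible, so that assigning the value in each case also establishes the converse direction. This is immediate, since the conditions partition all fields $K$ according to the two independent predicates ``$\sqrt 5\in K$'' and ``$-1$ is a sum of two squares in $K$'', noting that the case $\sqrt5\in K$ together with $-1$ not a sum of two squares is absorbed into case~(3) as part of ``$\sqrt5\notin K$ or $-1$ not a sum of two squares,'' and that whenever $-1$ is a sum of two squares the value is determined solely by the presence of $\sqrt5$. I do not expect any genuine obstacle here: this corollary is, as the text says, an immediate consequence of Proposition~\ref{prop:Beauville} and Remark~\ref{rem:D2n,A4,S4,A5}, and the whole proof is essentially a table lookup matching arithmetic hypotheses to the realizable subgroup of maximal Jordan constant.
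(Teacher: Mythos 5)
Your proposal is correct and is exactly the argument the paper intends: the paper states this corollary as an immediate ("obvious") consequence of Proposition~\ref{prop:Beauville} together with the Jordan constants listed in Remark~\ref{rem:D2n,A4,S4,A5}, and your case analysis --- realizability of $\mathfrak{A}_5$, $\mathfrak{S}_4$, or only cyclic/dihedral subgroups (with $D_6$ always present to attain the value $2$), plus the observation that the three arithmetic conditions partition all fields of characteristic zero and yield distinct values --- is precisely that table lookup spelled out.
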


\begin{sled}\label{sled:J(PGL_2xPGL_2xZ2)}
Let $K$ be a field of characteristic $0$. Consider the semidirect product of groups $(\PGL_2(K)\times\PGL_2(K))\rtimes\mathbb{Z}/2\mathbb{Z}$, where the nontrivial element of the group $\mathbb{Z}/2\mathbb{Z}$ acts by permutation of factors.
\begin{enumerate}
        \item $J((\PGL_2(K) \times \PGL_2(K)) \rtimes \mathbb{Z}/2\mathbb{Z}) = 7200$ if and only if $\sqrt{5} \in K$ and $-1$ is a sum of two squares in $K$; 
        \item $J((\PGL_2(K) \times \PGL_2(K)) \rtimes \mathbb{Z}/2\mathbb{Z}) = 72$ if and only if $\sqrt{5} \not\in K$ and $-1$ is a sum of two squares in $K$;
        \item $J((\PGL_2(K) \times \PGL_2(K)) \rtimes \mathbb{Z}/2\mathbb{Z}) = 8$ if and only if $-1$ is not a sum of two squares in $K$.
\end{enumerate}
\end{sled}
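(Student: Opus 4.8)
The plan is to deduce this statement immediately from Lemma~\ref{lemma:Gamma X Gamma X Z/2Z} applied to $\Gamma = \PGL_2(K)$, combined with the three values of $J(\PGL_2(K))$ recorded in Corollary~\ref{sled:J(PGL_2)}. The substantive content has already been packaged into those two earlier results, so the corollary should follow by verifying the hypotheses of the lemma and then performing a direct substitution.

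First I would check that $\Gamma = \PGL_2(K)$ satisfies the assumptions of Lemma~\ref{lemma:Gamma X Gamma X Z/2Z}. It is Jordan, since it is a linear algebraic group over a field of characteristic zero, as noted in the introduction. Moreover, it contains a nontrivial finite subgroup: by Proposition~\ref{prop:Beauville} the group $\PGL_2(K)$ always contains $D_6$ (indeed it always contains an element of order $2$), so the required nontrivial finite subgroup exists regardless of the arithmetic of $K$. Hence Lemma~\ref{lemma:Gamma X Gamma X Z/2Z} applies and yields
$$J\bigl((\PGL_2(K) \times \PGL_2(K)) \rtimes \mathbb{Z}/2\mathbb{Z}\bigr) = 2\,J(\PGL_2(K))^2.$$

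It then remains to substitute the three cases of Corollary~\ref{sled:J(PGL_2)}. When $\sqrt{5} \in K$ and $-1$ is a sum of two squares in $K$, we have $J(\PGL_2(K)) = 60$, giving $2\cdot 60^2 = 7200$. When $\sqrt{5} \notin K$ and $-1$ is a sum of two squares in $K$, we have $J(\PGL_2(K)) = 6$, giving $2\cdot 6^2 = 72$. When $-1$ is not a sum of two squares in $K$, we have $J(\PGL_2(K)) = 2$, giving $2\cdot 2^2 = 8$. Since the three arithmetic conditions in Corollary~\ref{sled:J(PGL_2)} are mutually exclusive and exhaustive, the ``if and only if'' statements transfer verbatim to the three values $7200$, $72$, $8$, which is exactly the assertion.

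I do not expect any genuine obstacle here: the corollary is a formal consequence of Lemma~\ref{lemma:Gamma X Gamma X Z/2Z} and Corollary~\ref{sled:J(PGL_2)}. The only point requiring a moment of care is confirming the hypothesis that $\PGL_2(K)$ contains a nontrivial finite subgroup, so that the lemma is legitimately applicable; this is guaranteed unconditionally by Proposition~\ref{prop:Beauville}. The arithmetic bookkeeping of matching each condition to the correct numerical value is routine once the formula $2\,J(\PGL_2(K))^2$ is in hand.
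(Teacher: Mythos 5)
Your proposal is correct and matches the paper's own proof, which likewise obtains the corollary by applying Lemma~\ref{lemma:Gamma X Gamma X Z/2Z} to the values in Corollary~\ref{sled:J(PGL_2)}; your explicit verification that $\PGL_2(K)$ is Jordan and contains a nontrivial finite subgroup, together with the substitution $2\,J(\PGL_2(K))^2 \in \{7200, 72, 8\}$, is exactly the (unwritten) content of the paper's one-line proof.
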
 
\begin{proof}
Apply Lemma~\ref{lemma:Gamma X Gamma X Z/2Z} to Corollary~\ref{sled:J(PGL_2)}.
\end{proof}

Now we can prove Theorem~\ref{theo:theo P^1 x P^1}.

\begin{proof}[proof of Theorem~\ref{theo:theo P^1 x P^1}.] 
The automorphism group of the surface $\mathbb{P}^1_K\times\mathbb{P}^1_K$ is isomorphic to the group $(\PGL_2(K)\times\PGL_2(K)) \rtimes\mathbb{Z}/2\mathbb{Z}$, where the nontrivial element of the group~$\mathbb{Z}/2\mathbb{Z}$ acts by permutation of factors. Therefore, applying  Corollary~\ref{sled:J(PGL_2xPGL_2xZ2)}, we obtain an assertion of the theorem. \end{proof}




\section{\boldmath{Jordan constants of groups of type $\PGL_2(L) \rtimes \mathbb{Z}/2\mathbb{Z}$}}\label{sect: semidirect 2}

In this section, we estimate and compute the Jordan constants of groups of the form $$\PGL_2(L)\rtimes\mathbb{Z}/2\mathbb{Z},$$ where $L$ is a field of characteristic zero, as usual.

An element of the group $\PGL_2(L) \rtimes\mathbb{Z}/2\mathbb{Z}$ we will write as $g = (\gamma, i)$, where~\mbox{$\gamma \in \PGL_2(L)$}, and $i = 0$ (the trivial element of the group $\mathbb{Z}/2\mathbb{Z}$), or $i = 1$ (the nontrivial element of the group $\mathbb{Z}/2\mathbb{Z}$).

Let $g_1 = (\gamma_1, i)$, $g_2 = (\gamma_2, j) \in\PGL_2(L) \rtimes\mathbb{Z}/2\mathbb{Z}$, then the group operation looks as following: $$g_1g_2 = (\gamma_1(\varphi_i(\gamma_2)), i +j)
, $$ where $\varphi_i$ is an automorphism of the group $\PGL_2(L)$ included in the definition of a semidirect product.

\begin{lemma}\label{lemma: |G| < 120}
Let $L$ be a field of characteristic $0$. Then $$J(\PGL_2(L) \rtimes \mathbb{Z}/2\mathbb{Z}) \leqslant120.$$
\end{lemma}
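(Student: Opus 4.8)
The plan is to reduce everything to the Jordan constant of the normal subgroup $\PGL_2(L)$ and to exploit the fact, recorded in Remark~\ref{rem:D2n,A4,S4,A5}, that finite subgroups of $\PGL_2(L)$ realize their Jordan constant on a \emph{characteristic} abelian subgroup. First I would take an arbitrary finite subgroup $H \subset \PGL_2(L) \rtimes \mathbb{Z}/2\mathbb{Z}$ and set $H^0 = H \cap \PGL_2(L)$, where $\PGL_2(L)$ is identified with the kernel of the projection $p$ onto $\mathbb{Z}/2\mathbb{Z}$. Then $H^0$ is a finite subgroup of $\PGL_2(L)$ and is normal in $H$ of index $[H:H^0] \leqslant 2$, since $H/H^0$ embeds into $\mathbb{Z}/2\mathbb{Z}$.

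If $[H:H^0] = 1$, then $H = H^0 \subseteq \PGL_2(L)$, so by Lemma~\ref{lemma:J_1<J_2} and Corollary~\ref{sled:J(PGL_2)} we get $J(H) \leqslant J(\PGL_2(L)) \leqslant 60 \leqslant 120$, and there is nothing more to do. The main case is $[H:H^0] = 2$. Here I would invoke Remark~\ref{rem:D2n,A4,S4,A5}: since $H^0$ is a finite subgroup of $\PGL_2(L)$, there is a \emph{characteristic} abelian subgroup $A \subset H^0$ with $[H^0:A] = J(H^0)$. Because $H^0$ is normal in $H$, conjugation by any element of $H$ restricts to an automorphism of $H^0$, and such automorphisms preserve the characteristic subgroup $A$; hence $A$ is normal in $H$. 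Thus $A$ is a normal abelian subgroup of $H$ of index
$$[H:A] = [H:H^0]\cdot[H^0:A] = 2\,J(H^0) \leqslant 2\cdot 60 = 120,$$
where the bound $J(H^0) \leqslant 60$ again comes from Corollary~\ref{sled:J(PGL_2)}. Therefore $J(H) \leqslant 120$, and taking the supremum over all finite $H$ gives the claim.

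The argument is short, and the only genuinely nontrivial ingredient is the passage ``characteristic in a normal subgroup implies normal,'' which is what lets the bound on $J(H^0)$ survive the extension by $\mathbb{Z}/2\mathbb{Z}$. I do not expect a serious obstacle; the point worth stating carefully is precisely that the Jordan constant of $H^0$ is attained on a \emph{characteristic} (not merely normal) abelian subgroup, so that Remark~\ref{rem:D2n,A4,S4,A5} is doing the essential work rather than a bare appeal to $J(H^0) \leqslant 60$. It is worth noting that this gives only the inequality $\leqslant 120$; no claim about sharpness or about the action on $\PGL_2(L)$ is needed, so the unknown automorphism $\varphi_1$ appearing in the semidirect product never has to be analyzed.
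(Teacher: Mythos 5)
Your proof is correct and follows essentially the same route as the paper: intersect the finite subgroup with $\PGL_2(L)$, take the characteristic abelian subgroup from Remark~\ref{rem:D2n,A4,S4,A5} realizing the Jordan constant, and note that being characteristic in the normal subgroup $H^0$ makes it normal in $H$ of index at most $2J(\PGL_2(L)) \leqslant 120$. Your explicit treatment of the case $H \subseteq \PGL_2(L)$ and your emphasis on why \emph{characteristic} (not merely normal) is needed are slightly more detailed than the paper's write-up, but the argument is identical in substance.
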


\begin{proof}
Let $G\subset\PGL_2(L)\rtimes\mathbb{Z}/2\mathbb{Z}$ be a finite subgroup. Denote by $G^0$ the intersection of $G$ with $\PGL_2(L)$. Then, according to Remark~\ref{rem:D2n,A4,S4,A5}, there is an abelian characteristic subgroup $A\subset G^0$, on which the Jordan constant of the group~$G^0$ is reached. Hence, $A$ is a normal abelian subgroup of $G$ of index $2J(G^0)$, and $$J(G)\leqslant 2J(G^0)\leqslant 2J(\PGL_2(L)) \leqslant 120.$$ Since $G$ is an arbitrary finite subgroup of $\PGL_2(L)\rtimes\mathbb{Z}/2\mathbb{Z}$, the lemma is proved.
\end{proof}

\begin{lemma}\label{lemma: sqrt(5) - a,b -} 
Let $L$ be such a field of characteristic $0$ that group $\PGL_2(L)$ does not contain a subgroup isomorphic to $\mathfrak{A}_5$. Then  $$J(\PGL_2(L) \rtimes \mathbb{Z}/2\mathbb{Z}) \leqslant 6.$$
\end{lemma}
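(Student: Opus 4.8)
The plan is to follow the proof of Lemma~\ref{lemma: |G| < 120} but to isolate and treat separately the one extremal case where the crude argument fails. Let $G \subset \PGL_2(L) \rtimes \mathbb{Z}/2\mathbb{Z}$ be an arbitrary finite subgroup, and set $G^0 = G \cap \PGL_2(L)$, a normal subgroup of index $1$ or $2$. Since $\PGL_2(L)$ contains no subgroup isomorphic to $\mathfrak{A}_5$, the finite subgroup $G^0$ is cyclic, dihedral, $\mathfrak{A}_4$ or $\mathfrak{S}_4$, so by Remark~\ref{rem:D2n,A4,S4,A5} we have $J(G^0) \leq 6$, with equality only when $G^0 \simeq \mathfrak{S}_4$. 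If $G = G^0$, then $J(G) \leq J(\PGL_2(L)) \leq 6$ and we are done, so from now on I assume $[G:G^0] = 2$.

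The estimate $J(G) \leq 2 J(G^0)$ from Lemma~\ref{lemma: |G| < 120} already yields $J(G) \leq 6$ whenever $J(G^0) \leq 3$, and I would recall the reason briefly: by Remark~\ref{rem:D2n,A4,S4,A5} the Jordan constant of $G^0$ is reached on a characteristic abelian subgroup $A \subset G^0$; being characteristic in the normal subgroup $G^0$, the subgroup $A$ is normal in $G$, and $[G:A] = 2[G^0:A] = 2J(G^0) \leq 6$. Hence $J(G) \leq 6$ in every case except $G^0 \simeq \mathfrak{S}_4$, where this argument only gives $J(G) \leq 12$.

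The main point, and the only real obstacle, is therefore the case $G^0 \simeq \mathfrak{S}_4$, where I expect the spurious factor $2$ to be absorbed. Here $G$ fits into a short exact sequence $1 \to \mathfrak{S}_4 \to G \to \mathbb{Z}/2\mathbb{Z} \to 0$ with $\mathbb{Z}/2\mathbb{Z}$ finite abelian. Since $4 \neq 6$, Corollary~\ref{sled: ext of S_n} applies and forces $G \simeq \mathfrak{S}_4 \times \mathbb{Z}/2\mathbb{Z}$: because $\mathfrak{S}_4$ has trivial center and only inner automorphisms, the extension can be nothing but a direct product. Consequently, by Lemma~\ref{lemma:J_1 x J_2} together with Remark~\ref{rem:D2n,A4,S4,A5}, $J(G) = J(\mathfrak{S}_4) \cdot J(\mathbb{Z}/2\mathbb{Z}) = 6 \cdot 1 = 6$, so the adjoined $\mathbb{Z}/2\mathbb{Z}$ splits off as a direct factor and contributes the trivial factor $J(\mathbb{Z}/2\mathbb{Z}) = 1$ rather than the feared $2$.

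Combining the cases, every finite subgroup $G \subset \PGL_2(L) \rtimes \mathbb{Z}/2\mathbb{Z}$ satisfies $J(G) \leq 6$, and since $G$ was arbitrary this proves $J(\PGL_2(L) \rtimes \mathbb{Z}/2\mathbb{Z}) \leq 6$. The crux of the whole argument is the structural rigidity of extensions of $\mathfrak{S}_4$ provided by Corollary~\ref{sled: ext of S_n}, which is precisely what rescues the bound from the naive value $12$ down to the sharp value $6$.
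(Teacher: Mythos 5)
Your proof is correct and follows essentially the same route as the paper's: the same reduction to the subgroup $G^0 = G \cap \PGL_2(L)$, the same characteristic-abelian-subgroup argument (via Remark~\ref{rem:D2n,A4,S4,A5}) for the cyclic, dihedral and $\mathfrak{A}_4$ cases, and the same application of Corollary~\ref{sled: ext of S_n} to force $G \simeq \mathfrak{S}_4 \times \mathbb{Z}/2\mathbb{Z}$ in the remaining case. The only difference is expository: you spell out that $G^0 \simeq \mathfrak{S}_4$ is the unique case where the crude bound $2J(G^0)$ exceeds $6$, which the paper leaves implicit.
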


\begin{proof}
Since $\PGL_2(L)$ does not contain a subgroup isomorphic to $\mathfrak{A}_5$, then we have the inequality $J(\PGL_2(L)\leqslant 6$ according to Remark~\ref{rem:D2n,A4,S4,A5}. It remains to show that the Jordan constants of the finite subgroups contained in $\PGL_2(L)\rtimes\mathbb{Z}/2\mathbb{Z}$, but not contained in~$\PGL_2(L)$, also do not exceed 6.

Let $G$ be a finite subgroup of $\PGL_2(L)\rtimes\mathbb{Z}/2\mathbb{Z}$ not contained in $\PGL_2(L)$. Then~$G$ is included in the exact sequence $$1\xrightarrow{} G^0 \xrightarrow{} G\xrightarrow{} \mathbb{Z}/2\mathbb{Z}\xrightarrow{} 0,$$ where $G^0$ is the intersection of the group $G$ with $\PGL_2(L)$. Since $\PGL_2(L)$ does not contain $\mathfrak{A}_5$, then, according to Proposition~\ref{prop:Beauville}, the group $G^0$ is isomorphic to either a cyclic group, or a dihedral group, or $\mathfrak{A}_4$, or $\mathfrak{S}_4$. 

Suppose $G^0$ is isomorphic to either $\mathbb{Z}/n\mathbb{Z}$ or $D_{2n}$ or $\mathfrak{A}_4$. By Remark~\ref{rem:D2n,A4,S4,A5} it contains characteristic abelian subgroup $A$ of index at most $3$. Then $A$ is normal abelian subgroup in $G$ of index at most $6$, and $J(G) \leqslant 6$.

Suppose $G^0 \simeq \mathfrak{S}_4$. Applying Corollary~\ref{sled: ext of S_n} and Remark~\ref{rem:D2n,A4,S4,A5}, we get $J(G)\leqslant 6$.

Thus, we have shown that the Jordan constant does not exceed 6 for all possible finite subgroups in $\PGL_2(L)\rtimes\mathbb{Z}/2\mathbb{Z}$, and the lemma is proved.\end{proof}

The following two propositions are useful when the base field does not contain~$\sqrt{5}$.

\begin{propos}\label{prop:A_5 action}
    Let $K$ be a field of characteristic $0$, and $r \in K$ be an element, which is not a square. Assume that $K(\sqrt{r})$ contains $\sqrt{5}$, and there exist $a, b, c, d \in K$, such that $$(a + b\sqrt{r})^2 + (c + d\sqrt{r})^2 = -1.$$ Consider two matrices in $\PGL_2(K(\sqrt{r}))$:
    $$A = \begin{pmatrix}
  0 & 1\\
  -1 & 0\\
  \end{pmatrix},\;
  C = \begin{pmatrix}
  2c + 2d\sqrt{r} + \sqrt{5} - 3& 2a + 2b\sqrt{r} - \sqrt{5} + 1\\
  2a + 2b\sqrt{r} + \sqrt{5} - 1& -2c - 2d\sqrt{r} + \sqrt{5} -3\\
  \end{pmatrix}. $$
  The group $G' = \langle A,C\rangle$ is isomorphic to $\mathfrak{A}_5$, and there exists an isomorphism that maps matrix $A$ to permutation $(12)(34)$, and matrix $C$ to permutation $(12345)$. \end{propos}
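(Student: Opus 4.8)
The plan is to exhibit $G'$ as a quotient of $\mathfrak{A}_5$ via the presentation from Lemma~\ref{lemma: A_5 presentation}, which gives an isomorphism $\langle x,y\mid x^5=y^2=(xy)^3=e\rangle\simeq\mathfrak{A}_5$ sending $x\mapsto(12345)$ and $y\mapsto(12)(34)$. I want to match $C$ with $x$ and $A$ with $y$, which reduces the statement to verifying the three relations
$$C^5=e,\qquad A^2=e,\qquad (CA)^3=e$$
in $\PGL_2(K(\sqrt r))$ (the entries of $C$ lie in $K(\sqrt r)$ precisely because $\sqrt5\in K(\sqrt r)$ by hypothesis). Once these hold, the assignment $x\mapsto C$, $y\mapsto A$ defines a surjection $\mathfrak{A}_5\twoheadrightarrow G'$; since $G'$ is nontrivial (it contains $A$, whose image has order $2$) and $\mathfrak{A}_5$ is simple, this surjection is an isomorphism carrying $C$ to $(12345)$ and $A$ to $(12)(34)$.

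To check the relations I would set $p=a+b\sqrt r$ and $q=c+d\sqrt r$, so that the hypothesis reads $p^2+q^2=-1$, and compute orders in $\PGL_2$ through traces and determinants. The relevant fact is that a non-scalar $M\in\GL_2$ with distinct eigenvalues $\lambda_1,\lambda_2$ represents an element of order $n$ in $\PGL_2$ exactly when the ratio $t=\lambda_1/\lambda_2$ is a primitive $n$-th root of unity, and that $t+t^{-1}=\tr(M)^2/\det(M)-2$. The relation $A^2=e$ is immediate, since $A^2=-I$ is scalar.

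For $C$ one computes $\tr(C)=2\sqrt5-6$ and, expanding the two products in the determinant, $\det(C)=20-8\sqrt5-4(p^2+q^2)$. This is the one place the hypothesis is essential: substituting $p^2+q^2=-1$ collapses it to $\det(C)=24-8\sqrt5$. Hence $\tr(C)^2/\det(C)=(56-24\sqrt5)/(24-8\sqrt5)=(3-\sqrt5)/2$, so $t+t^{-1}=-(1+\sqrt5)/2$; since $z^2+\tfrac{1+\sqrt5}{2}z+1$ is a factor of $z^5-1$, the ratio $t$ is a primitive fifth root of unity and $C^5$ is scalar. Likewise $CA$ has $\tr(CA)=2\sqrt5-2$ and $\det(CA)=\det(C)\det(A)=24-8\sqrt5$ (as $\det A=1$), giving $\tr(CA)^2/\det(CA)=1$, i.e.\ $t+t^{-1}=-1$; thus the ratio is a primitive cube root of unity and $(CA)^3$ is scalar.

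With the three relations verified, the conclusion follows from the quotient-and-simplicity argument above. The computations are routine linear algebra; the only genuinely load-bearing points are the appeal to $\sqrt5\in K(\sqrt r)$ (so that $C$ is defined over $K(\sqrt r)$) and the substitution $p^2+q^2=-1$ that turns $\det(C)$ into $24-8\sqrt5$, which is exactly what makes the trace-determinant ratios land on the cyclotomic values. I expect the bookkeeping in $\det(C)$ — keeping the $\sqrt5$ and $\sqrt r$ terms straight — to be the main, though purely mechanical, obstacle.
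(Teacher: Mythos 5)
Your proposal is correct, and its skeleton is exactly the paper's: reduce everything to the three relations $A^2=e$, $C^5=e$, $(CA)^3=e$ via the presentation of Lemma~\ref{lemma: A_5 presentation}, then conclude by simplicity of $\mathfrak{A}_5$ and nontriviality of $G'$. The one genuine difference is how the relations are verified. The paper computes $C^5$ and $(CA)^3$ explicitly and exhibits them as scalar matrices ($C^5=(-2560\sqrt5+5632)\cdot I$, $(CA)^3=(-64\sqrt5+128)\cdot I$), whereas you read off the orders from the conjugation invariant $\tr^2/\det$: your values $\tr(C)^2/\det(C)=(3-\sqrt5)/2$ and $\tr(CA)^2/\det(CA)=1$ are right (I verified $\det(C)=20-8\sqrt5-4(p^2+q^2)=24-8\sqrt5$ and $\tr(CA)=2\sqrt5-2$), and the corresponding quadratics $z^2+\tfrac{1+\sqrt5}{2}z+1$ and $z^2+z+1$ do divide $z^5-1$ and $z^3-1$ respectively. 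Your route buys two things: it avoids the brute-force fifth power, and it makes transparent where the hypothesis $p^2+q^2=-1$ enters (it is invisible in the paper's displayed scalar matrices, which silently use it); it is also the same invariant the paper itself introduces later in Lemma~\ref{lemma: tr/det}. Two small points you should make explicit rather than leave implicit: the eigenvalue-ratio criterion needs $\det\neq 0$ (true: $24-8\sqrt5\neq 0$) and distinct eigenvalues, which is guaranteed precisely because $\tr^2/\det\neq 4$ in both cases; with those remarks added, your argument is complete and fully equivalent to the paper's.
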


\begin{proof}
Taking in account Lemma~\ref{lemma: A_5 presentation}, it is enough to check that the matrices $A$ and $C$ satisfy the relations $A^2 = e$, $C^5 = e$ and $(CA)^3 = e$. Direct calculations show that this is the case:
  $$A^2 = \begin{pmatrix}
    -1 & 0 \\
    0 & -1 \\
\end{pmatrix} = e;$$ 
$$C^5 =\begin{pmatrix}
    -2560\sqrt{5} + 5632& 0 \\
    0 &  -2560\sqrt{5} + 5632\\
\end{pmatrix} =  e;$$ 
$$(CA)^3 =\begin{pmatrix}
    -64\sqrt{5} + 128 & 0 \\
    0 & -64\sqrt{5} + 128 \\
\end{pmatrix} = e.$$
It follows that $G'$ is isomorphic to the quotient of the group $\mathfrak{A}_5$. But $\mathfrak{A}_5$ is simple, and~$G'$ is nontrivial, thus $G'\simeq \mathfrak{A}_5$. Existence of a specified isomorphism is guaranteed by Lemma~\ref{lemma: A_5 presentation}. \end{proof}

\begin{lemma}\label{lemma: S_5 action}
Let $K$ be a field of characteristic $0$. Assume that $\sqrt{5}\notin K$, and $-1$ is the sum of two squares in $K(\sqrt{5})$. Then $$J(\PGL_2(K(\sqrt{5})) \rtimes\mathbb{Z}/2\mathbb{Z}) = 120,$$ where the nontrivial element of the group $\mathbb{Z}/2\mathbb{Z}$ acts by the Galois involution of the extension~$K\subset K(\sqrt{5})$.
\end{lemma}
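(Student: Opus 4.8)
The plan is to combine the upper bound already furnished by Lemma~\ref{lemma: |G| < 120} with the construction of an explicit subgroup isomorphic to $\mathfrak{S}_5$, whose Jordan constant equals $120$. Write $L = K(\sqrt{5})$; since $\sqrt{5}\notin K$ we have $[L:K]=2$, and the nontrivial element of $\mathbb{Z}/2\mathbb{Z}$ acts on $\PGL_2(L)$ by the Galois involution $\tau\colon\sqrt{5}\mapsto-\sqrt{5}$. Lemma~\ref{lemma: |G| < 120} gives $J(\PGL_2(L)\rtimes\mathbb{Z}/2\mathbb{Z})\leqslant 120$, so it suffices to exhibit a finite subgroup with Jordan constant at least $120$.

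To build it, I would apply Proposition~\ref{prop:A_5 action} with $r=5$: the hypotheses hold because $5$ is a non-square in $K$, because $K(\sqrt{5})=L$ contains $\sqrt{5}$, and because by assumption $-1=(a+b\sqrt{5})^2+(c+d\sqrt{5})^2$ for suitable $a,b,c,d\in K$. This produces explicit matrices $A,C\in\PGL_2(L)$ with $G':=\langle A,C\rangle\simeq\mathfrak{A}_5$. The crucial point is that $G'$ must be invariant under $\tau$: since $A$ has rational entries we have $\tau(A)=A$, and the substantive part is the direct verification that $\tau(C)$ again lies in $G'$ (for instance, by writing $\tau(C)$ as an explicit word in $A$ and $C$). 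Granting this, the element $\sigma:=(e,1)$ normalizes $G'$, conjugation by $\sigma$ restricts to $\tau$ on $G'$ by the multiplication rule of the semidirect product, and $G:=\langle G',\sigma\rangle$ fits into the exact sequence $1\to\mathfrak{A}_5\to G\to\mathbb{Z}/2\mathbb{Z}\to 0$.

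By Proposition~\ref{prop: A_n x Z2 or S_n} with $n=5$, either $G\simeq\mathfrak{A}_5\times\mathbb{Z}/2\mathbb{Z}$ or $G\simeq\mathfrak{S}_5$. Since $J(\mathfrak{A}_5\times\mathbb{Z}/2\mathbb{Z})=J(\mathfrak{A}_5)\cdot J(\mathbb{Z}/2\mathbb{Z})=60$ by Lemma~\ref{lemma:J_1 x J_2}, whereas $J(\mathfrak{S}_5)=120$ (the simplicity of $\mathfrak{A}_5$ forces the trivial subgroup to be the only abelian normal subgroup of $\mathfrak{S}_5$), it remains only to exclude the direct product, i.e.\ to show that $\tau$ acts on $G'$ as an \emph{outer} automorphism. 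The cleanest route is to check that $C\sigma=(C,1)$ has order $4$: from $(C,1)^2=(C\,\tau(C),0)$ this reduces to verifying that $C\,\tau(C)$ is a nontrivial involution in $\PGL_2(L)$, and $\mathfrak{A}_5\times\mathbb{Z}/2\mathbb{Z}$ contains no element of order $4$; equivalently one checks that $\tau(C)$ falls into the conjugacy class of $C^2$ rather than that of $C$, the class-swap that characterizes outer automorphisms of $\mathfrak{A}_5$. Then $G\simeq\mathfrak{S}_5$, so $J(G)=120$, which together with the upper bound proves the lemma. The main obstacle is precisely the pair of explicit matrix verifications flagged above — that $G'$ is $\tau$-stable and that the induced automorphism is outer — since everything else is assembling the cited results.
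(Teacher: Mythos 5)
Your skeleton coincides with the paper's (upper bound from Lemma~\ref{lemma: |G| < 120}, the subgroup $G'=\langle A,C\rangle\simeq\mathfrak{A}_5$ from Proposition~\ref{prop:A_5 action} with $r=5$, recognition of $\mathfrak{S}_5$ via Proposition~\ref{prop: A_n x Z2 or S_n}), but the step you flag and then grant --- that $G'$ is stable under the Galois involution $\tau$, so that $\sigma=(e,1)$ normalizes $(G',0)$ --- is not merely unverified: it is false in general, and this is exactly where the paper's proof deviates from yours. Take $K=\mathbb{Q}(i)$ with $a=i$, $b=c=d=0$ (a legitimate choice, since $i^2+0^2=-1$). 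Writing $s=\sqrt{5}$, Proposition~\ref{prop:A_5 action} gives
$$C=\begin{pmatrix} s-3 & 2i-s+1\\ 2i+s-1 & s-3\end{pmatrix},\qquad \tau(C)=\begin{pmatrix} -s-3 & 2i+s+1\\ 2i-s-1 & -s-3\end{pmatrix},$$
and a direct computation yields $\tr\bigl(C\,\tau(C)\bigr)=(4-4is)+(4+4is)=8$ and $\det\bigl(C\,\tau(C)\bigr)=(24-8s)(24+8s)=256$, so $\tr^2\bigl(C\,\tau(C)\bigr)/\det\bigl(C\,\tau(C)\bigr)=1/4$. If $\tau(C)$ lay in $G'$, then $C\,\tau(C)$ would be an element of an $\mathfrak{A}_5$ inside $\PGL_2$, hence of order $1$, $2$, $3$ or $5$, and since a diagonalization gives $\tr^2/\det=2+2\cos(2\pi k/n)$ for an element of order $n$, the only possible values are $4,\,0,\,1,\,(3\pm\sqrt{5})/2$ (the last two by Lemma~\ref{lemma: tr/det}); the value $1/4$ is not in this set, so $\tau(C)\notin G'$ and no word in $A$ and $C$ can equal it. Worse, $2+2\cos\theta=1/4$ gives $\cos\theta=-7/8$, which by Niven's theorem is not the cosine of a rational multiple of $\pi$, so $C\,\tau(C)$ has infinite order in $\PGL_2(L)$; hence your element $(C,0)\sigma=(C,1)$ has infinite order and $\langle G',\sigma\rangle$ is not even finite. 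The exact sequence $1\to\mathfrak{A}_5\to G\to\mathbb{Z}/2\mathbb{Z}\to 0$ that you feed into Proposition~\ref{prop: A_n x Z2 or S_n} therefore does not exist, and nothing in the hypotheses allows you to force a representation $-1=(a+b\sqrt{5})^2+(c+d\sqrt{5})^2$ for which $\tau$-stability of $G'$ would hold.

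The paper repairs precisely this point: instead of $(e,1)$ it uses the twisted involution $(R,1)$ with $R=\begin{pmatrix} a+c & a-c\\ a-c & -a-c\end{pmatrix}$, and verifies by direct matrix computation that $(R,1)$ has order $2$ and normalizes $(G',0)$, via the identities $(R,1)(A,0)(R,1)=(A,0)$ and $(R,1)(C,0)(R,1)=((C^2A)^3,0)$. Geometrically, $\tau$ carries $G'$ to a conjugate but different copy of $\mathfrak{A}_5$, and one must compose $\tau$ with conjugation by the intertwining matrix $R$ to stabilize $G'$. Your remaining observations are sound and would finish the proof once the normalizing element is corrected: outer-ness of the induced automorphism is indeed automatic from the class swap, since $\tr^2/\det$ is conjugation-invariant and $\tau$ exchanges $(3+\sqrt{5})/2$ with $(3-\sqrt{5})/2$, so the map $x\mapsto R\,\tau(x)\,R^{-1}$ sends the class of $C$ to the class of $C^2$, which no inner automorphism of $\mathfrak{A}_5$ does; and $J(\mathfrak{S}_5)=120$ while the direct-product alternative would only give $60$. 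So the gap is concrete and fatal as stated: the pure Galois element does not normalize $G'$, and the auxiliary matrix $R$ is not an optional convenience but the substance of the paper's argument.
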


\begin{proof}
From Lemma~\ref{lemma: |G| < 120} we have the inequality $$J(\PGL_2(K(\sqrt{5})) \rtimes\mathbb{Z}/2\mathbb{Z})\leqslant 120.$$ It remains to find a finite subgroup with Jordan constant equal to 120.

By the assumption of the lemma, there are $a, b, c, d \in K$, such that $$(a + b\sqrt{5})^2 + (c + d\sqrt{5})^2 = -1.$$ Consider the matrix $$R = \begin{pmatrix}
  a+c & a-c\\
  a-c & -a -c\\
  \end{pmatrix} \in \PGL_2(K(\sqrt{5})).$$ Let $G' \subset \PGL_2(K(\sqrt{5}))$ be a group from Proposition~\ref{prop:A_5 action} for $r = 5$. Let us show that $$\postdisplaypenalty 10000 \langle (G',0), (R,1) \rangle \simeq \mathfrak{S}_5,$$ and that's where the proof ends, because $J(\mathfrak{S}_5) = 120$.

Firstly, note that the element $(R,1)$ has order $2$. Secondly, the subgroup $(G',0)$ is invariant with respect to the conjugation by the element $(R,1)$. Indeed, it is enough to check that generators remain in the group after conjugation: $$(R,1)(A,0)(R,1) = (A,0);$$ $$(R,1)(C,0)(R,1) = ((C^2A)^3,0).$$ 
Therefore $$\langle (G',0), (R,1) \rangle\simeq\mathfrak{A}_5 \rtimes\mathbb{Z}/2\mathbb{Z},$$ moreover, the product is not direct, since the conjugation by element $(R,1)$ induces a noninner automorphism of the group $(G',0)$. Thus, according to Proposition~\ref{prop: A_n x Z2 or S_n}, we have the isomorphism $\langle (G',0), (R,1)\rangle\simeq\mathfrak{S}_5$. \end{proof}

Observe that for an element $T \in \PGL_2(K)$ the value $\frac{\tr^2(T)}{\det(T)}$ is well-defined and invariant under conjugation in $\PGL_2(K)$. Let us prove the following auxiliary lemma.

\begin{lemma}\label{lemma: tr/det}
Let $B \in \PGL_2(K)$ be an element of order $5$. Then $$\frac{\tr^2(B)}{\det(B)} = \frac{3}{2} \pm \frac{1}{2}\sqrt{5}, \; \frac{\tr^2(B^2)}{\det(B^2)} = \frac{3}{2} \mp \frac{1}{2}\sqrt{5}.$$
\end{lemma}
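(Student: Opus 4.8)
The plan is to reduce everything to a computation with the eigenvalue ratio of a lift of $B$. First I would lift $B$ to a matrix $\tilde B \in \GL_2(K)$; since $B$ has order $5$ in $\PGL_2(K)$, the matrix $\tilde B^5$ is scalar, say $\tilde B^5 = cI$ with $c \in K^\times$. Over $\overline K$ the polynomial $x^5 - c$ has five distinct roots (as $\operatorname{char} K = 0$), so $\tilde B$ is diagonalizable with eigenvalues $\mu_1, \mu_2$ satisfying $\mu_1^5 = \mu_2^5 = c$. Consequently the ratio $\lambda = \mu_1/\mu_2$ is a fifth root of unity. If $\lambda = 1$, then $\tilde B$ is scalar and $B = e$, contradicting that $B$ has order $5$; hence $\lambda$ is a primitive fifth root of unity.

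Next I would invoke the invariance, observed just before the lemma, of $\tr^2(T)/\det(T)$ under rescaling the lift, and rewrite it in terms of $\lambda$. Since $\tr(\tilde B) = \mu_1 + \mu_2$ and $\det(\tilde B) = \mu_1\mu_2$, one has
$$\frac{\tr^2(B)}{\det(B)} = \frac{(\mu_1 + \mu_2)^2}{\mu_1\mu_2} = \lambda + \lambda^{-1} + 2.$$
The result then follows from the two values $\lambda + \lambda^{-1}$ can take on primitive fifth roots of unity: for a primitive fifth root $\xi_5$ one has $\xi_5 + \xi_5^{-1} = \tfrac{-1+\sqrt 5}{2}$ and $\xi_5^2 + \xi_5^{-2} = \tfrac{-1-\sqrt 5}{2}$, these being the two roots of $t^2 + t - 1 = 0$ obtained from $\Phi_5$ via the substitution $t = x + x^{-1}$. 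Adding $2$ gives $\tfrac{3}{2}\pm\tfrac{1}{2}\sqrt 5$, which is the first claimed equality.

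For the second equality I would pass to $B^2$. The eigenvalues of $\tilde B^2$ are $\mu_1^2, \mu_2^2$, so its eigenvalue ratio is $\lambda^2$, and the same computation yields $\tr^2(B^2)/\det(B^2) = \lambda^2 + \lambda^{-2} + 2$. Since $\lambda$ is a primitive fifth root, $\lambda \in \{\xi_5, \xi_5^{-1}\}$ forces $\lambda^2 \in \{\xi_5^2, \xi_5^{-2}\}$ and vice versa; thus whenever $\lambda + \lambda^{-1} + 2 = \tfrac32 + \tfrac12\sqrt 5$ we get $\lambda^2 + \lambda^{-2} + 2 = \tfrac32 - \tfrac12\sqrt 5$, and conversely. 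This yields precisely the correlated sign $\mp$ in the statement.

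The computation is entirely routine; the only point requiring care is the correlation of the two signs, which is what forces $\mp$ rather than an independent $\pm$. This is handled by the observation that squaring a primitive fifth root interchanges the two Galois-conjugate values $\tfrac{-1\pm\sqrt5}{2}$ of $\lambda + \lambda^{-1}$.
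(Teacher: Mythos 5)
Your proof is correct and follows essentially the same route as the paper: diagonalize a lift of $B$ over $\overline{K}$, observe that the eigenvalue ratio $\lambda$ is a primitive fifth root of unity, and compute $\tr^2/\det = \lambda + \lambda^{-1} + 2$, with squaring interchanging the two Galois-conjugate values $\frac{-1\pm\sqrt{5}}{2}$ to give the correlated signs. If anything, you are slightly more careful than the paper, which leaves implicit both the diagonalizability (which you get from separability of $x^5 - c$) and the exclusion of $\lambda = 1$.
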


\begin{proof}
    Over the algebraic closure $\overline{K}$, matrix $B$ is diagonalizable, namely there exists an element $C \in \PGL_2(\overline{K})$ such that $$CBC^{-1} = \begin{pmatrix}
        \delta_1 & 0\\
        0 & \delta_2\\
    \end{pmatrix}.$$
Since $B$ has order $5$, then we have an equality $\delta_2 = \xi\delta_1$, where $\xi$ is a $5$-th root of unity. Therefore $$\frac{\tr^2(B)}{\det(B)} = \frac{\tr^2(CBC^{-1})}{\det(CBC^{-1})} = \frac{(\delta_1 + \xi\delta_1)^2}{\xi\delta_1^2} = \xi^{-1} + 2 + \xi = \frac{3}{2} \pm \frac{1}{2}\sqrt{5}$$ and $$\frac{\tr^2(B^2)}{\det(B^2)} = \frac{\tr^2(CB^2C^{-1})}{\det(CB^2C^{-1})} = \frac{(\delta^2_1 + \xi^2\delta^2_1)^2}{\xi^2\delta_1^4} = \xi^{-2} + 2 + \xi^2 = \frac{3}{2} \mp \frac{1}{2}\sqrt{5}.$$ 
\end{proof}

The last lemma of this section handles the case of such fields containing~$\sqrt{5}$, that $-1$ is not a sum of two squares in this fields.

\begin{lemma}\label{lemma: J = 60 for every quadratic extension}
Let $K$ be a field of characteristic $0$ such that $-1$ is not a sum of two squares in $K$, but $\sqrt{5}$ lies in $K$. Let $K\subset L$ be a quadratic extension of fields. Consider the group~$\PGL_2(L)\rtimes\mathbb{Z}/2\mathbb{Z}$, where the nontrivial element of the group $\mathbb{Z}/2\mathbb{Z}$ acts by Galois involution of the extension $K\subset L$. Then $$J(\PGL_2(L)\rtimes\mathbb{Z}/2\mathbb{Z}) \leqslant 60.$$
\end{lemma}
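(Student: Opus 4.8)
The plan is to bound $J(G)$ for every finite subgroup $G\subset\PGL_2(L)\rtimes\mathbb{Z}/2\mathbb{Z}$ by analysing the intersection $G^0 = G\cap\PGL_2(L)$. If $G = G^0\subset\PGL_2(L)$, then $J(G)\leqslant J(\PGL_2(L))\leqslant 60$, since by Proposition~\ref{prop:Beauville} every finite subgroup of $\PGL_2(L)$ is one of $\mathbb{Z}/n\mathbb{Z}$, $D_{2n}$, $\mathfrak{A}_4$, $\mathfrak{S}_4$, $\mathfrak{A}_5$, whose Jordan constants (Remark~\ref{rem:D2n,A4,S4,A5}) do not exceed $60$. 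So I would then assume $[G:G^0]=2$ and run through the possibilities for $G^0$. When $G^0$ is $\mathbb{Z}/n\mathbb{Z}$, $D_{2n}$ or $\mathfrak{A}_4$, the characteristic abelian subgroup of index at most $3$ from Remark~\ref{rem:D2n,A4,S4,A5} is normal in $G$ of index at most $6$, so $J(G)\leqslant 6$; when $G^0\simeq\mathfrak{S}_4$, Corollary~\ref{sled: ext of S_n} gives $G\simeq\mathfrak{S}_4\times\mathbb{Z}/2\mathbb{Z}$ and $J(G)=6$. This mirrors the argument of Lemma~\ref{lemma: sqrt(5) - a,b -}; the only genuinely new case, and the main obstacle, is $G^0\simeq\mathfrak{A}_5$.

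In that case Proposition~\ref{prop: A_n x Z2 or S_n} tells us that $G$ is isomorphic either to $\mathfrak{A}_5\times\mathbb{Z}/2\mathbb{Z}$, where $J(G)=60$, or to $\mathfrak{S}_5$, where $J(\mathfrak{S}_5)=120$. So the whole lemma reduces to excluding the possibility $G\simeq\mathfrak{S}_5$. Equivalently, writing an element of $G\setminus G^0$ as $(\gamma,1)$, I must show that the automorphism $\varphi$ of $G^0\simeq\mathfrak{A}_5$ induced by conjugation by $(\gamma,1)$, namely $\varphi(\beta)=\gamma\,\sigma(\beta)\,\gamma^{-1}$ where $\sigma$ denotes the Galois involution of $L/K$, is \emph{inner}. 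By Theorem~\ref{theo:auto A_n} we have $\Aut(\mathfrak{A}_5)\simeq\mathfrak{S}_5$, and an automorphism is inner precisely when it fixes each of the two conjugacy classes of order-$5$ elements, the outer automorphisms swapping them; so it suffices to show that $\varphi$ preserves these two classes.

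Here I would exploit the conjugation-invariant $\tfrac{\tr^2}{\det}$ on $\PGL_2$ together with Lemma~\ref{lemma: tr/det}. For an order-$5$ element $B\in G^0$ one has $\tfrac{\tr^2(B)}{\det(B)}=\tfrac32\pm\tfrac12\sqrt5$, the two signs distinguishing the two conjugacy classes (since $B$ and $B^2$ lie in different classes and carry opposite signs). Because $\sqrt5\in K\subset L$, these two values lie in $K$ and are fixed by $\sigma$. Conjugation by $\gamma$ preserves $\tfrac{\tr^2}{\det}$, while applying $\sigma$ to a lift of $B$ replaces the invariant by its image under $\sigma$; hence
$$\frac{\tr^2(\varphi(B))}{\det(\varphi(B))}=\sigma\!\left(\frac{\tr^2(B)}{\det(B)}\right)=\frac{\tr^2(B)}{\det(B)}.$$
Thus $\varphi(B)$ lies in the same class as $B$ for every order-$5$ element, so $\varphi$ fixes both classes and is inner. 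By Proposition~\ref{prop: section and direct product} this forces $G\simeq\mathfrak{A}_5\times\mathbb{Z}/2\mathbb{Z}$ and $J(G)=60$, ruling out $\mathfrak{S}_5$ and completing the bound $J(\PGL_2(L)\rtimes\mathbb{Z}/2\mathbb{Z})\leqslant 60$.

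The crux, and the step I expect to require the most care, is exactly the invariance $\sigma\big(\tfrac32\pm\tfrac12\sqrt5\big)=\tfrac32\pm\tfrac12\sqrt5$: this is where the hypothesis $\sqrt5\in K$, making $\sqrt5$ Galois-fixed, is decisive. It distinguishes the present situation from the case $\sqrt5\notin K$ treated in Lemma~\ref{lemma: S_5 action}, where the Galois involution moves $\sqrt5$, the two classes of order-$5$ elements get swapped, and $\mathfrak{S}_5$ genuinely occurs.
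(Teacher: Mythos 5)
Your proof is correct and takes essentially the same route as the paper: the same case analysis on $G^0=G\cap\PGL_2(L)$, the same reduction via Proposition~\ref{prop: A_n x Z2 or S_n} to excluding $\mathfrak{S}_5$ when $G^0\simeq\mathfrak{A}_5$, and the same decisive computation with the invariant $\tfrac{\tr^2}{\det}$ from Lemma~\ref{lemma: tr/det}, where $\sqrt{5}\in K$ forces the Galois involution to fix $\tfrac{3}{2}\pm\tfrac{1}{2}\sqrt{5}$. The only difference is packaging: the paper assumes $H\simeq\mathfrak{S}_5$ and reaches a contradiction via the splitting of the class of order-$5$ elements (Lemma~\ref{lemma: conjugacy classes in A_n}) and an auxiliary conjugating element $\tilde{R}$, whereas you run the identical computation to show the induced automorphism of $\mathfrak{A}_5$ preserves both classes of order-$5$ elements, hence is inner, and conclude via Proposition~\ref{prop: section and direct product}.
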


\begin{proof}
Suppose $-1$ is not a sum of two squares in $L$. Then, by Proposition~\ref{prop:Beauville}, the group $\PGL_2(L)$ does not contain a subgroup isomorphic to $\mathfrak{A}_5$. Therefore, by Lemma~\ref{lemma: sqrt(5) - a,b -} we have $$J(\PGL_2(L)\rtimes\mathbb{Z}/2\mathbb{Z})\leqslant 6.$$

Now suppose $-1$ is a sum of two squares in $L$. Note that in this case, according to Proposition~\ref{prop:Beauville}, the group $\PGL_2(L)$ contains a finite subgroup isomorphic to $\mathfrak{A}_5$.

Let $H$ be an arbitrary finite subgroup of $\PGL_2(L)\rtimes\mathbb{Z}/2\mathbb{Z}$. If at the same time~$H$ is a subgroup of $\PGL_2(L)$, then by Corollary~\ref{sled:J(PGL_2)} we immediately get~$J(H)\leqslant 60$. Suppose $H$ is not a subgroup of $\PGL_2(L)$, then we have a short exact sequence $$1\xrightarrow{}{} H^0\xrightarrow{}{} H\xrightarrow{}{}\mathbb{Z}/2\mathbb{Z}\xrightarrow{}{} 0,$$ where $H^0$ is the intersection of $H$ with $(\PGL_2(L),0)$. 
If $H^0$ is not isomorphic to $\mathfrak{A}_5$, then by Remark~\ref{rem:D2n,A4,S4,A5} we have $J(H^0)\leqslant 6$, and $J(H)\leqslant 12$. If $H^0$ is isomorphic to $\mathfrak{A}_5$, then according to Proposition~\ref{prop: A_n x Z2 or S_n}, group $H$ is isomorphic to either $\mathfrak{A}_5\times\mathbb{Z}/2\mathbb{Z}$, or $\mathfrak{S}_5$. Let us show that $H$ cannot be isomorphic to $\mathfrak{S}_5$. 

Suppose that $H\simeq \mathfrak{S}_5$. Consider an element $(B, 0) \in H^0$ of order $5$, consider an element~$(R,1) \in H$ of order $2$ and conjugate one by other: $$(R,1)(B,0)(R,1) = (R\overline{B} \overline{R}, 0) \in H^0.$$ Since element $(R,1)$ has order $2$, then $\overline{R} = R^{-1}$ and we can rewrite  $$(R\overline{B} \overline{R}, 0) =  (R \overline{B} R^{-1}, 0) \in H^0.$$ According to Lemma~\ref{lemma: conjugacy classes in A_n}, conjugacy class of an element $(B,0)$ in $H$ splits into two conjugacy classes in $H^0$. It is easy to see that elements $(B,0)$ and $(B^2, 0)$ lies in different conjugacy classes in $H^0$. Also it is easy to see that elements $(B,0)$ and $(R \overline{B} R^{-1}, 0)$ lies in different conjugacy classes in $H^0$. Therefore $(B^2,0)$ and $(R \overline{B} R^{-1}, 0)$ lies in the same conjugacy class in $H^0$. In particular there exists an element $\tilde{R} \in \PGL_2(L)$ such that $$\tilde{R}B^2\tilde{R}^{-1} = R \overline{B} R^{-1}.$$ We obtain the following $$\frac{\tr^2(B^2)}{\det(B^2)} = \frac{\tr^2(\tilde{R}B^2\tilde{R}^{-1})}{\det(\tilde{R}B^2\tilde{R}^{-1})} = \frac{\tr^2(R\overline{B}R^{-1})}{\det(R\overline{B}R^{-1})} =  \frac{\tr^2(\overline{B})}{\det(\overline{B})} = \overline{\left(\frac{\tr^2(B)}{\det(B)}\right)}.$$ But $B$ is an element of $\PGL_2(L)$ of order $5$, then by Lemma~\ref{lemma: tr/det} we have $$\frac{\tr^2(B)}{\det(B)} \in \mathbb{Q}(\sqrt{5}) \subset K.$$ Therefore Galois involution of extension $K \subset L$ acts on the scalar $\frac{\tr^2(B)}{\det(B)}$ trivially, and we get an equality $$\frac{\tr^2(B^2)}{\det(B^2)} = \overline{\left(\frac{\tr^2(B)}{\det(B)}\right)} = \frac{\tr^2(B)}{\det(B)}.$$ But this is a contradiction with Lemma~\ref{lemma: tr/det}. So $H$ cannot be isomorphic to $\mathfrak{S}_5$. 

Therefore, $H \simeq \mathfrak{A}_5 \times \mathbb{Z}/2\mathbb{Z}$, and $J(H) = 60$.

Thus, all possible cases of finite subgroups are considered and the lemma is proved.
\end{proof}

\section{Computation of \boldmath$M(K)$}\label{sect: main theorem}

Consider a field $K$. As mentioned in the introduction, we want to compute the following value:$$ M(K) = \max\limits_{X}(J(\Aut(X)),$$ where the maximum is taken over all smooth rational quadrics in $\mathbb{P}^3_K$.

The following proposition is standard. It will be needed in order to use general results about del Pezzo surfaces of degree 8.

\begin{propos}\label{prop:dp8 --> quadric}
    Let $X$ be a smooth rational surface over a field $K$. The following conditions are equivalent.
    \begin{enumerate}
    \item $X$ is isomorphic to a smooth quadric in $\mathbb{P}^3_K$;
    \item $X$ is a del Pezzo surface of degree $8$ such that $X_{\overline{K}} \simeq \mathbb{P}_{\overline{K}}^1 \times \mathbb{P}_{\overline{K}}^1$.
    \end{enumerate}
\end{propos}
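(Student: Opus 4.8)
The plan is to establish the two implications separately; the implication $(1)\Rightarrow(2)$ is a direct adjunction computation, while $(2)\Rightarrow(1)$ rests on descending the half-anticanonical class from $\overline K$ to $K$.

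For $(1)\Rightarrow(2)$, I would let $X\subset\mathbb{P}^3_K$ be a smooth quadric with hyperplane class $H$, so that $(H|_X)^2=\deg X=2$. By adjunction $K_X=(K_{\mathbb{P}^3_K}+X)|_X=(-4H+2H)|_X=-2H|_X$, hence $-K_X=2H|_X$ is a positive multiple of the very ample class $H|_X$ and is therefore ample, so $X$ is a del Pezzo surface, and $(-K_X)^2=4(H|_X)^2=8$. Finally $X_{\overline K}$ is a smooth two-dimensional quadric over an algebraically closed field, hence isomorphic to $\mathbb{P}^1_{\overline K}\times\mathbb{P}^1_{\overline K}$ via the Segre embedding.

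For $(2)\Rightarrow(1)$, I would write $\Pic(X_{\overline K})=\mathbb{Z}f_1\oplus\mathbb{Z}f_2$, where $f_1,f_2$ are the classes of the two rulings with $f_1^2=f_2^2=0$, $f_1\cdot f_2=1$ and $-K_{X_{\overline K}}=2f_1+2f_2$. The group $\Gal(\overline K/K)$ acts on $\Pic(X_{\overline K})$ preserving the intersection form and the canonical class, so it either fixes each $f_i$ or interchanges $f_1$ and $f_2$; in both cases the class $f_1+f_2=-\tfrac12 K_{X_{\overline K}}$ is Galois-invariant. Because $X$ is a rational surface over a field of characteristic zero it carries a $K$-rational point, so the obstruction map $\Pic(X_{\overline K})^{\Gal}\to\mathrm{Br}(K)$ from the Hochschild--Serre exact sequence vanishes and $\Pic(X)\cong\Pic(X_{\overline K})^{\Gal}$; hence $f_1+f_2$ descends to a class $H\in\Pic(X)$ with $H^2=2$. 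By flat base change $h^0(X,\mathcal{O}_X(H))=h^0(\mathbb{P}^1_{\overline K}\times\mathbb{P}^1_{\overline K},\mathcal{O}(1,1))=4$, so $|H|$ defines a morphism $\varphi\colon X\to\mathbb{P}^3_K$ whose base change to $\overline K$ is the Segre embedding. Since being a closed immersion and having image a smooth quadric can both be verified after the faithfully flat extension $K\subset\overline K$, the morphism $\varphi$ exhibits $X$ as a smooth quadric in $\mathbb{P}^3_K$.

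The main obstacle is the descent of the half-anticanonical class: a priori only $-K_X$, and not its half $f_1+f_2$, is automatically defined over $K$, and the latter descends precisely when a Brauer obstruction vanishes. This is exactly where the rationality hypothesis is indispensable --- it supplies the $K$-point that trivializes the obstruction, in agreement with the remark in the introduction that a smooth two-dimensional quadric is rational if and only if it has a rational point. All the remaining assertions (that $|H|$ is base-point free, that $\varphi$ is a closed immersion, and that the image has degree $H^2=2$) are insensitive to base field extension and thus reduce to the standard geometry of $\mathbb{P}^1\times\mathbb{P}^1$.
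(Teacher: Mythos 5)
Your proof is correct and takes essentially the same approach as the paper: both descend the class of bidegree $(1,1)$ from $\Pic(X_{\overline{K}})^{\Gal(\overline{K}/K)}$ to $\Pic(X)$ using the exact sequence for the Picard group, with the Brauer obstruction trivialized by the $K$-point that rationality supplies via Lang--Nishimura, and then embed $X$ into $\mathbb{P}^3_K$ by the descended class. Your extra details (the explicit Galois-invariance of $f_1+f_2$, the adjunction computation for $(1)\Rightarrow(2)$, the $h^0=4$ count, and the faithfully flat descent of the closed-immersion property) merely make explicit what the paper treats as standard.
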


\begin{proof}
Condition 1 obviously implies 2. Let us prove the implication in the other direction. Consider the exact sequence of groups \hbox{(see~\cite[Exercise 3.3.5(iii)]{G-S})}: $$0 \xrightarrow{} \text{Pic}(X) \xrightarrow{} \text{Pic}(X_{\overline{K}})^{\Gal(\overline{K}/K)} \xrightarrow{} \text{Br}(X) \xrightarrow{} \text{Br}(K(X)).$$ Since $X$ is rational, there is a $K$-point on $X$ by the Lang--Nishimura theorem (see for example~\hbox{\cite[Theorem 3.6.11]{Poo}}). Therefore the last homomorphism is an embedding and $$\text{Pic}(X) \simeq \text{Pic}(X_{\overline{K}})^{\Gal(\overline{K}/K)}.$$ This means that the class of divisors of the bidegree $(1,1)$ is defined over $K$, and it defines an embedding of $X$ in $\mathbb{P}^3_K$ as a smooth quadric.\end{proof}

There is an explicit description of del Pezzo surfaces of degree 8, which become isomorphic to the product of two projective lines when passing to the algebraic closure. Let~$K\subset L$ be a finite extension of fields, and $Y$ be a variety over $L$. By $R_{L/K}(Y)$, we denote the  Weil restriction of scalars (see, for example,~\cite[\S 8]{G-S}).

\begin{lemma}[{\cite[Lemma 7.3]{SV}}]\label{lemma:C1xC2 or R}
\begin{enumerate}

\item Let $X$ be a del Pezzo surface of degree $8$ over a field $K$ such that $X_{\overline{K}} \simeq \mathbb{P}_{\overline{K}}^1 \times \mathbb{P}_{\overline{K}}^1$. Then either $\rk\Pic(X) = 2$ and $X$ is isomorphic to a product $C \times C'$ of two conics over $K$, or $\rk\Pic(X) = 1$ and $X$ is isomorphic to~$R_{L/K}(Q)$, where $L \supset K$ is a quadratic separable extension and $Q$ is a conic over $L$.

\item Let $C$ be a smooth conic over $K$, and $L \supset K$ be a quadratic separable extension. Then $$\Aut(R_{L/K}(C_L)) \simeq \Aut(C_L) \rtimes \mathbb{Z}/2\mathbb{Z},$$ where the nontrivial element of the group $\mathbb{Z}/2\mathbb{Z}$ acts by the Galois involution of the extension~$L \supset K$.
\end{enumerate}
\end{lemma}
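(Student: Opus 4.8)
The plan is to prove the two parts separately, in each case passing to the algebraic closure and analyzing the action of $\Gamma = \Gal(\overline{K}/K)$ on geometric data. For part (1), I would start from the identification $\Pic(X_{\overline{K}}) \simeq \mathbb{Z}f_1 \oplus \mathbb{Z}f_2$, where $f_1, f_2$ are the classes of the two rulings of $\mathbb{P}^1_{\overline{K}} \times \mathbb{P}^1_{\overline{K}}$, with $f_1^2 = f_2^2 = 0$ and $f_1 \cdot f_2 = 1$. Since the $\Gamma$-action preserves the intersection form and the class $-K_X = 2f_1 + 2f_2$, and the only two classes $D$ with $D^2 = 0$ and $D \cdot (-K_X) = 2$ are $f_1$ and $f_2$, the group $\Gamma$ either fixes both $f_1, f_2$ or interchanges them. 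Using $\Pic(X) \simeq \Pic(X_{\overline{K}})^{\Gamma}$ (which follows, as in Proposition~\ref{prop:dp8 --> quadric}, from rationality of $X$ and the Lang--Nishimura theorem), this dichotomy is exactly $\rk\Pic(X) = 2$ versus $\rk\Pic(X) = 1$.

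In the first case both rulings $X_{\overline{K}} \to \mathbb{P}^1_{\overline{K}}$ are given by $\Gamma$-invariant linear systems, hence descend to $K$-morphisms $p \colon X \to C$ and $p' \colon X \to C'$ onto smooth genus-zero curves, i.e. conics; the product $(p,p') \colon X \to C \times C'$ is an isomorphism because it is one after base change to $\overline{K}$. In the second case the stabilizer of $f_1$ is an index-two subgroup $\Gamma_L = \Gal(\overline{K}/L)$ for a quadratic separable extension $L/K$; over $L$ the class $f_1$ is defined and yields a conic bundle $X_L \to Q$ for a conic $Q$ over $L$. I would then identify $X$ with $R_{L/K}(Q)$ by Galois descent: both surfaces have geometric model $\mathbb{P}^1_{\overline{K}} \times \mathbb{P}^1_{\overline{K}}$ equipped with the same descent datum, namely $\Gamma_L$ preserving each factor and the nontrivial coset interchanging them, which is precisely the descent datum defining the Weil restriction.

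For part (2), I would base change to $\overline{K}$, where $R_{L/K}(C_L)_{\overline{K}} \simeq C_{\overline{K}} \times C_{\overline{K}} \simeq \mathbb{P}^1_{\overline{K}} \times \mathbb{P}^1_{\overline{K}}$, so $\Aut(R_{L/K}(C_L)_{\overline{K}}) \simeq (\PGL_2(\overline{K}) \times \PGL_2(\overline{K})) \rtimes \mathbb{Z}/2\mathbb{Z}$, and the $K$-automorphisms are the $\Gamma$-invariant ones. Writing the (semilinear) $\Gamma$-action explicitly, with $\Gamma_L$ acting factorwise through the conic $C_L$ and the nontrivial coset acting by the swap composed with the Galois identification, one checks that a pair $(\phi_1, \phi_2)$ in the identity component is invariant if and only if $\phi_1 \in \Aut(C_L)$ and $\phi_2 = \phi_1^{\sigma}$, giving a diagonal copy of $\Aut(C_L)$; the swap $s \colon (x,y) \mapsto (y,x)$ survives as an extra involution, and since $s \cdot (\phi_1, \phi_1^{\sigma}) \cdot s^{-1} = (\phi_1^{\sigma}, \phi_1)$, conjugation by $s$ sends $\phi_1 \mapsto \phi_1^{\sigma}$. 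This yields $\Aut(R_{L/K}(C_L)) \simeq \Aut(C_L) \rtimes \mathbb{Z}/2\mathbb{Z}$ with the $\mathbb{Z}/2\mathbb{Z}$ acting by the Galois involution of $L/K$.

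The main obstacle I expect is the bookkeeping in the $\rk\Pic = 1$ case of part (1) and in the invariant computation of part (2): in both one must track the semilinear $\Gamma$-action on $\PGL_2(\overline{K}) \times \PGL_2(\overline{K})$ with care, since the identification of $C_{\overline{K}}$ with $\mathbb{P}^1_{\overline{K}}$ is only canonical up to the twisting of the conic, and it is exactly this twisting that makes the invariant subgroup equal to $\Aut(C_L)$ rather than $\PGL_2(L)$. The remaining steps are formal descent and direct verification.
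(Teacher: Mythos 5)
The first thing to note is that the paper contains no proof of this lemma at all: it is imported verbatim from \cite[Lemma 7.3]{SV}, so your proposal can only be measured against the cited source, not against an internal argument. On its own merits your outline is close to the standard descent proof, and your part (2) computation is correct: identifying $\Aut(X)$ with the $\Gal(\overline{K}/K)$-invariants of $\Aut(X_{\overline{K}}) \simeq (\PGL_2(\overline{K}) \times \PGL_2(\overline{K})) \rtimes \mathbb{Z}/2\mathbb{Z}$ under the twisted semilinear action, the invariant pairs are exactly $(\phi, \phi^{\sigma})$ with $\phi \in \Aut(C_L)$, the swap survives, and conjugation by it is the Galois involution. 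But part (1) has a genuine gap: the lemma carries no rationality hypothesis (indeed \cite{SV} is about pointless surfaces), yet your derivation of the dichotomy rests on $\Pic(X) \simeq \Pic(X_{\overline{K}})^{\Gal(\overline{K}/K)}$, which you justify by rationality and Lang--Nishimura as in Proposition~\ref{prop:dp8 --> quadric}. That identification is available where the paper applies the lemma (Corollary~\ref{sled: P1 x P1 or R} adds rationality) but not for the statement you were asked to prove. The gap is repairable: by Hochschild--Serre and Hilbert 90 the map $\Pic(X) \to \Pic(X_{\overline{K}})^{\Gal(\overline{K}/K)}$ is always injective with cokernel embedding into $\mathrm{Br}(K)$, a torsion group, so the ranks agree without any rational point; and your descent of the rulings already correctly lands on conic bases $C, C'$ rather than on $\mathbb{P}^1_K$, so the rest of the rank-two case needs no point either.

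A second, smaller soft spot is the rank-one case: the assertion that $X$ and $R_{L/K}(Q)$ ``have the same descent datum, namely $\Gamma_L$ preserving each factor and the nontrivial coset interchanging them'' is too coarse, since distinct conics $Q$ yield non-isomorphic Weil restrictions with exactly the same preserve/swap pattern on $\Pic$. Matching the pattern of the action on $\Pic(X_{\overline{K}})$ only tells you $X$ is \emph{some} form with that pattern; to conclude $X \simeq R_{L/K}(Q)$ you should either apply your own rank-two case over $L$ to get $X_L \simeq Q \times Q'$, observe that the $\sigma$-semilinear swap forces $Q' \simeq Q^{\sigma}$, and then verify Weil's cocycle condition for the resulting datum, or invoke the twisting argument with Shapiro's lemma, $H^1\bigl(K, R_{L/K}(\PGL_{2,L})\bigr) \simeq H^1(L, \PGL_{2,L})$, which classifies all such forms precisely as Weil restrictions of conics over $L$. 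You flag this bookkeeping yourself as the expected obstacle, which is fair, but as written it is the step where the proof is incomplete rather than merely tedious.
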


From Proposition~\ref{prop:dp8 --> quadric} and Lemma~\ref{lemma:C1xC2 or R} we get the following corollary.

\begin{sled}\label{sled: P1 x P1 or R}
Let $K$ be a field of characteristic $0$.
\begin{enumerate}
    \item Let $X$ be a smooth rational quadric in $\mathbb{P}^3_K$. Then either $\rk\Pic(X) = 2$ and~$X$ is isomorphic to $\mathbb{P}^1_K\times\mathbb{P}^1_K$, or $\rk\Pic(X) =1$ and $X$ is isomorphic to $R_{L/K}(\mathbb{P}^1_L)$, where~$L \supset K$ is a quadratic extension.
    \item Let $L \supset K$ be a quadratic extension, then $$\Aut(R_{L/K}(\mathbb{P}^1_L))\simeq\PGL_2(L)\rtimes\mathbb{Z}/2\mathbb{Z},$$ where the nontrivial element of the group $\mathbb{Z}/2\mathbb{Z}$ acts by the Galois involution of the extension~$L \supset K$.
    \end{enumerate}
 \end{sled}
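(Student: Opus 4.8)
The plan is to feed the hypotheses through Proposition~\ref{prop:dp8 --> quadric} and Lemma~\ref{lemma:C1xC2 or R}, and then use the single extra piece of information that a rational quadric carries a $K$-point in order to recognize the conics that appear as projective lines. The guiding elementary fact is that a smooth conic over any field is isomorphic to the projective line over that field as soon as it has a rational point.

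For statement (1), I would begin with a smooth rational quadric $X \subset \mathbb{P}^3_K$. By Proposition~\ref{prop:dp8 --> quadric}, $X$ is a del Pezzo surface of degree $8$ with $X_{\overline{K}} \simeq \mathbb{P}^1_{\overline{K}} \times \mathbb{P}^1_{\overline{K}}$, so Lemma~\ref{lemma:C1xC2 or R}(1) applies and yields the dichotomy: either $\rk\Pic(X) = 2$ and $X \simeq C \times C'$ for two conics over $K$, or $\rk\Pic(X) = 1$ and $X \simeq R_{L/K}(Q)$ for a quadratic separable extension $L \supset K$ (which in characteristic $0$ is just an arbitrary quadratic extension) and a conic $Q$ over $L$. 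Since $X$ is rational, it has a $K$-point (this is the Lang--Nishimura input already exploited in the proof of Proposition~\ref{prop:dp8 --> quadric}, and for a smooth quadric surface rationality is in fact equivalent to the existence of a rational point). In the rank $2$ case I would transport this $K$-point through the two projections $X = C \times C' \to C$ and $X \to C'$, obtaining $K$-points on $C$ and on $C'$; hence $C \simeq C' \simeq \mathbb{P}^1_K$ and $X \simeq \mathbb{P}^1_K \times \mathbb{P}^1_K$, consistently with $\rk\Pic = 2$. In the rank $1$ case I would invoke the universal property of the Weil restriction, which identifies $X(K) = R_{L/K}(Q)(K)$ with $Q(L)$; a $K$-point of $X$ thus produces an $L$-point of $Q$, forcing $Q \simeq \mathbb{P}^1_L$ and $X \simeq R_{L/K}(\mathbb{P}^1_L)$.

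For statement (2), I would simply specialize Lemma~\ref{lemma:C1xC2 or R}(2). Writing $\mathbb{P}^1_L = (\mathbb{P}^1_K)_L$, i.e. taking the conic $C = \mathbb{P}^1_K$ over $K$ so that $C_L = \mathbb{P}^1_L$, the lemma gives $\Aut(R_{L/K}(\mathbb{P}^1_L)) \simeq \Aut(\mathbb{P}^1_L) \rtimes \mathbb{Z}/2\mathbb{Z}$, where the nontrivial element of $\mathbb{Z}/2\mathbb{Z}$ acts by the Galois involution of $L \supset K$; since $\Aut(\mathbb{P}^1_L) \simeq \PGL_2(L)$, this is exactly the asserted isomorphism. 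As both parts are short deductions from the cited results, I do not anticipate a genuine obstacle; the only point demanding care is the rank $1$ case, where one must make the adjunction $R_{L/K}(Q)(K) = Q(L)$ explicit so that the $K$-rational point of $X$ is correctly carried to an $L$-rational point of $Q$. Everything else reduces to the pointed-conic fact together with the two quoted statements.
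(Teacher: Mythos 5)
Your proposal is correct and follows essentially the same route as the paper: Proposition~\ref{prop:dp8 --> quadric} plus Lemma~\ref{lemma:C1xC2 or R}, the Lang--Nishimura $K$-point, the pointed-conic fact in the rank~$2$ case, and the adjunction $R_{L/K}(Q)(K) = Q(L)$ (which the paper cites as~\cite[Exercise 8.1.2(iv)]{G-S}) in the rank~$1$ case, with assertion~(2) read off from Lemma~\ref{lemma:C1xC2 or R}(2) via $\Aut(\mathbb{P}^1_L)\simeq\PGL_2(L)$. The only difference is that you spell out details the paper leaves implicit, such as transporting the $K$-point through the two projections of $C\times C'$.
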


\begin{proof}
We should immediately note that the rationality of $X$ implies the existence of a $K$-point on $X$ according to the Leng--Nishimura theorem (see for example~\mbox{\cite[Theorem 3.6.11]{Poo}}).

Applying Proposition~\ref{prop:dp8 --> quadric}, we conclude that $X$ is a del Pezzo surface of degree 8 such that~$X_{\overline{K}} \simeq \mathbb{P}_{\overline{K}}^1 \times \mathbb{P}_{\overline{K}}^1$. Then, by Lemma~\ref{lemma:C1xC2 or R}, $X$ is isomorphic to either a product~$C \times C'$ of two conics over $K$ or Weil scalar restriction $R_{L/K}(Q)$, where $K \subset L$ is a quadratic extension and $Q$ is a conic over $L$. Surface $C\times C'$ contains $K$-point if and only if $C\simeq C'\simeq\mathbb{P}^1_K$, and~$R_{L/K}(Q)$ contains $K$-point if and only if $Q$ contains $L$-point \hbox{(see~\cite[Exercise 8.1.2(iv)]{G-S})}, that is, $Q \simeq \mathbb{P}^1_L$. Thus, assertion 1 is proved.

Assertion 2 follows immediately from assertion 2 of Lemma~\ref{lemma:C1xC2 or R} and existence of isomorphism~$\Aut(\mathbb{P}^1_L)\simeq\PGL_2(L)$.\end{proof}

Now we can rewrite value $M(K)$ as follows: $$M(K) = \max\big(J(\Aut(\mathbb{P}_K^1 \times\mathbb{P}_K^1)), \max\limits_{[L:K] = 2}(J(\PGL_2(L) \rtimes \mathbb{Z}/2\mathbb{Z}))\big)$$ and use the results obtained in Section~\ref{sect:P1xP1} and Section~\ref{sect: semidirect 2}.




In the following propositions, the value $M(K)$ is computed depending on the conditions on a field $K$ introduced in Theorem~\ref{theo:M(K)}.

\begin{propos}\label{prop: M(K) = 7200, (i), (ii)}
Let $K$ be a field of characteristic $0$, such that $\sqrt{5}\in K$, and $-1$ is a sum of two squares in $K$. Then $M(K) = 7200$.
\end{propos}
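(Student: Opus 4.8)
The plan is to exploit the decomposition
$$M(K) = \max\big(J(\Aut(\mathbb{P}_K^1 \times\mathbb{P}_K^1)), \max\limits_{[L:K] = 2}(J(\PGL_2(L) \rtimes \mathbb{Z}/2\mathbb{Z}))\big),$$
established above by means of Corollary~\ref{sled: P1 x P1 or R}, and to evaluate the two competing terms separately under the hypotheses $\sqrt{5} \in K$ and $-1$ a sum of two squares in $K$.

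First I would handle the term coming from the split quadric $\mathbb{P}^1_K \times \mathbb{P}^1_K$. Since both hypotheses hold, the field $K$ lands exactly in case (1) of Theorem~\ref{theo:theo P^1 x P^1}, which gives immediately
$$J(\Aut(\mathbb{P}^1_K \times \mathbb{P}^1_K)) = 7200.$$
In particular $M(K) \geqslant 7200$.

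Next I would show that the non-split quadrics cannot exceed this value. By assertion (1) of Corollary~\ref{sled: P1 x P1 or R}, every smooth rational quadric of Picard rank $1$ has automorphism group of the form $\PGL_2(L) \rtimes \mathbb{Z}/2\mathbb{Z}$ for some quadratic extension $L \supset K$, where the nontrivial element of $\mathbb{Z}/2\mathbb{Z}$ acts by the Galois involution. For each such $L$, Lemma~\ref{lemma: |G| < 120} supplies the uniform bound $J(\PGL_2(L) \rtimes \mathbb{Z}/2\mathbb{Z}) \leqslant 120$. Since $120 < 7200$, the second term in the maximum is strictly dominated by the first.

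Combining the two estimates yields $M(K) = 7200$. There is no real obstacle to overcome at this stage: the entire difficulty has already been absorbed into Theorem~\ref{theo:theo P^1 x P^1}, where the Jordan constant of $\mathbb{P}^1_K \times \mathbb{P}^1_K$ is computed and the value $7200 = 2\cdot 60^2$ arises from an $\mathfrak{A}_5$-subgroup in each $\PGL_2(K)$-factor, together with the crude-but-sufficient Lemma~\ref{lemma: |G| < 120}. The only thing this proposition records is the observation that, once those ingredients are available, the large value attained on the split quadric dwarfs the contribution of all rank-one quadrics, so the maximum is realized on $\mathbb{P}^1_K \times \mathbb{P}^1_K$.
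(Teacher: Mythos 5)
Your proposal is correct and coincides with the paper's own proof: both evaluate the two terms in the decomposition of $M(K)$ separately, obtaining $J(\Aut(\mathbb{P}^1_K\times\mathbb{P}^1_K))=7200$ from case (1) of Theorem~\ref{theo:theo P^1 x P^1} and the uniform bound $J(\PGL_2(L)\rtimes\mathbb{Z}/2\mathbb{Z})\leqslant 120$ from Lemma~\ref{lemma: |G| < 120} for every quadratic extension $L\supset K$. No gaps; this is exactly the argument in the paper.
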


\begin{proof}
Since $\sqrt{5}\in K$, and $-1$ is a sum of two squares in $K$, then by Theorem~\ref{theo:theo P^1 x P^1} $$J(\Aut(\mathbb{P}_K^1 \times \mathbb{P}_K^1)) = 7200.$$
Moreover, for any quadratic extension $L \supset K$, by Lemma~\ref{lemma: |G| < 120}, we have the inequality $$J(\PGL_2(L)\rtimes\mathbb{Z}/2\mathbb{Z}) \leqslant 120.$$ Therefore $M(K) = 7200$.\end{proof}

\begin{propos}\label{prop: M(K) = 120, not (i), but (iii)}
Let $K$ be a field of characteristic $0$ such that $\sqrt{5}\not\in K$, and $-1$ is a sum of two squares in $K(\sqrt{5})$. Then $M(K) = 120$.
\end{propos}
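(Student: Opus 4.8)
The plan is to evaluate both terms in the formula
$$M(K) = \max\big(J(\Aut(\mathbb{P}_K^1 \times\mathbb{P}_K^1)),\ \max\limits_{[L:K] = 2}\left(J(\PGL_2(L) \rtimes \mathbb{Z}/2\mathbb{Z})\right)\big)$$
and to show that the second term dominates and equals $120$. First I would bound the first term. Since $\sqrt{5}\notin K$ by hypothesis, case (1) of Theorem~\ref{theo:theo P^1 x P^1} does not apply; thus $J(\Aut(\mathbb{P}_K^1\times\mathbb{P}_K^1))$ equals $72$ if $-1$ is a sum of two squares in $K$, and $8$ otherwise. In both cases it is at most $72$, hence strictly less than $120$.

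Next I would observe that the second term is at most $120$. This is immediate from Lemma~\ref{lemma: |G| < 120}, which gives $J(\PGL_2(L)\rtimes\mathbb{Z}/2\mathbb{Z})\leqslant 120$ for every field $L$ of characteristic zero, and in particular for every quadratic extension $L\supset K$ appearing in the maximum.

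Finally I would exhibit a quadratic extension realizing the value $120$. The natural choice is $L=K(\sqrt{5})$, which is genuinely quadratic precisely because $\sqrt{5}\notin K$. By hypothesis $-1$ is a sum of two squares in $K(\sqrt{5})$, so the hypotheses of Lemma~\ref{lemma: S_5 action} hold verbatim, and that lemma yields $J(\PGL_2(K(\sqrt{5}))\rtimes\mathbb{Z}/2\mathbb{Z}) = 120$, with $\mathbb{Z}/2\mathbb{Z}$ acting by the Galois involution of $K\subset K(\sqrt{5})$ --- which is exactly the action appearing in Corollary~\ref{sled: P1 x P1 or R}. Combining the three steps, the maximum over quadratic extensions equals $120$, and it dominates the $\mathbb{P}^1\times\mathbb{P}^1$ contribution, so $M(K)=120$.

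The proposition carries no substantive obstacle of its own: the real work has already been done in Lemma~\ref{lemma: S_5 action} (the construction of a copy of $\mathfrak{S}_5$, containing $\mathfrak{A}_5\subset\PGL_2(K(\sqrt{5}))$, of Jordan constant $120$). The only point that needs attention is that the extension achieving the maximum must be $K(\sqrt{5})$ equipped with its Galois involution; this is why the hypothesis is stated in terms of $K(\sqrt{5})$ rather than an arbitrary quadratic extension, and one simply confirms that $K(\sqrt{5})$ is an admissible extension in the maximum defining $M(K)$.
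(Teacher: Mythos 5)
Your proof is correct and takes essentially the same route as the paper's own argument: bound $J(\Aut(\mathbb{P}^1_K\times\mathbb{P}^1_K))\leqslant 72$ via Theorem~\ref{theo:theo P^1 x P^1}, cap every quadratic-extension term at $120$ via Lemma~\ref{lemma: |G| < 120}, and realize the value $120$ with $L_0=K(\sqrt{5})$ via Lemma~\ref{lemma: S_5 action}. Your explicit checks that $K(\sqrt{5})$ is genuinely quadratic (since $\sqrt{5}\notin K$) and that the $\mathbb{Z}/2\mathbb{Z}$-action in Lemma~\ref{lemma: S_5 action} is the Galois involution appearing in Corollary~\ref{sled: P1 x P1 or R} are points the paper leaves implicit, but they do not change the argument.
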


\begin{proof}
Since $\sqrt{5}\not\in K$, then by the Theorem~\ref{theo:theo P^1 x P^1} $$J(\Aut(\mathbb{P}_K^1 \times \mathbb{P}_K^1)) \leqslant 72.$$ Also for any quadratic extension $L\supset K$, in accordance with Lemma~\ref{lemma: |G| < 120}, we have the inequality $$J(\PGL_2(L)\rtimes\mathbb{Z}/2\mathbb{Z}) \leqslant 120.$$ Therefore, it is sufficient to present a quadratic extension $L_0\supset K$ such that $$J(\PGL_2(L_0)\rtimes\mathbb{Z}/2\mathbb{Z}) = 120.$$ By Lemma~\ref{lemma: S_5 action}, we can take $L_0$ equals to $K(\sqrt{5})$. Thus, the proposition is proved.\end{proof}

\begin{propos}\label{prop: M(K) = 60, (i), not (ii)} 
Let $K$ be a field of characteristic $0$ such that $\sqrt{5}\in K$, and~$-1$ is not a sum of two squares in $K$. Then $M(K) = 60$.
\end{propos}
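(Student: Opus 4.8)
The plan is to work from the reduction formula
$$M(K) = \max\big(J(\Aut(\mathbb{P}_K^1 \times\mathbb{P}_K^1)), \max\limits_{[L:K] = 2}(J(\PGL_2(L) \rtimes \mathbb{Z}/2\mathbb{Z}))\big)$$
and to bound the two terms separately. For the first term, since $-1$ is not a sum of two squares in $K$, we fall into case (3) of Theorem~\ref{theo:theo P^1 x P^1}, so $J(\Aut(\mathbb{P}_K^1 \times \mathbb{P}_K^1)) = 8$, which is dominated by the target value $60$.

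For the upper bound on the second term, I observe that the hypotheses of the proposition ($\sqrt 5 \in K$ and $-1$ not a sum of two squares in $K$) are exactly the hypotheses of Lemma~\ref{lemma: J = 60 for every quadratic extension}. Hence that lemma applies verbatim to \emph{every} quadratic extension $K \subset L$ and yields $J(\PGL_2(L) \rtimes \mathbb{Z}/2\mathbb{Z}) \leqslant 60$. Combining this with the first term gives $M(K) \leqslant \max(8, 60) = 60$.

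It then remains only to exhibit a single quadratic extension $L_0 \supset K$ realizing the value $60$. I would take $L_0 = K(i)$ with $i = \sqrt{-1}$. This is genuinely a degree-two extension: were $i$ already in $K$, then $-1 = i^2 + 0^2$ would be a sum of two squares in $K$, contradicting the hypothesis. Over $L_0$ the element $-1 = i^2 + 0^2$ is a sum of two squares, and $\sqrt 5 \in K \subseteq L_0$; therefore by Proposition~\ref{prop:Beauville}(3) the group $\PGL_2(L_0)$ contains a subgroup isomorphic to $\mathfrak{A}_5$. Since $J(\mathfrak{A}_5) = 60$ by Remark~\ref{rem:D2n,A4,S4,A5}, monotonicity of the Jordan constant under passage to subgroups (Lemma~\ref{lemma:J_1<J_2}) gives
$$J(\PGL_2(L_0) \rtimes \mathbb{Z}/2\mathbb{Z}) \geqslant J(\PGL_2(L_0)) \geqslant 60.$$
Thus the maximum over quadratic extensions equals $60$, and therefore $M(K) = 60$.

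The substantive content is entirely contained in Lemma~\ref{lemma: J = 60 for every quadratic extension}, whose force is that no finite subgroup of $\PGL_2(L) \rtimes \mathbb{Z}/2\mathbb{Z}$ can be isomorphic to $\mathfrak{S}_5$ (an $\mathfrak{A}_5$ factor can only be extended to $\mathfrak{A}_5 \times \mathbb{Z}/2\mathbb{Z}$), which is where the conjugation-invariant $\tr^2/\det$ of Lemma~\ref{lemma: tr/det} together with the Galois action enters. Granting that lemma, the proposition reduces to short bookkeeping; the only real decision is the choice of witnessing extension $L_0 = K(i)$, and the sole point demanding care is verifying that $L_0$ is a \emph{proper} quadratic extension, which follows immediately from the failure of $-1$ to be a sum of two squares in $K$.
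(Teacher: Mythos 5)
Your proof is correct and follows essentially the same route as the paper: the value $8$ for $J(\Aut(\mathbb{P}^1_K\times\mathbb{P}^1_K))$ from Theorem~\ref{theo:theo P^1 x P^1}, the uniform upper bound $60$ over all quadratic extensions from Lemma~\ref{lemma: J = 60 for every quadratic extension}, and the lower bound via an $\mathfrak{A}_5$ inside $\PGL_2(L')$ supplied by Proposition~\ref{prop:Beauville}. The only (welcome) difference is that you name the witnessing extension $L_0=K(i)$ explicitly and verify $[K(i):K]=2$, whereas the paper merely invokes a quadratic extension $L'$ in which $-1$ becomes a sum of two squares, leaving that existence check implicit.
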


\begin{proof}
Since $-1$ is not a sum of two squares in $K$, then by Theorem~\ref{theo:theo P^1 x P^1}, $$J(\Aut(\mathbb{P}_K^1\times\mathbb{P}_K^1)) = 8. $$ Also for any quadratic extension $L\supset K$, according to Lemma~\ref{lemma: J = 60 for every quadratic extension}, we have the inequality $$J(\PGL_2(L)\rtimes\mathbb{Z}/2\mathbb{Z})\leqslant 60.$$ At the same time, for any quadratic extension $L'\supset K$ such that $-1$ is a sum of two squares in $L'$, according to Proposition~\ref{prop:Beauville}, group $\PGL_2(L')$ contains a group isomorphic to~$\mathfrak{A}_5$. Therefore, we have $$J(\PGL_2(L') \rtimes \mathbb{Z}/2\mathbb{Z})\geqslant J(\PGL_2(L')) \geqslant J(\mathfrak{A}_5) = 60.$$
Thus, $M(K) = 60$, and the proposition is proved.
\end{proof}

\begin{propos}\label{prop: M(K) = 8, not (i), not (iii)} 
Let $K$ be a field of characteristic $0$ such that $\sqrt{5}\not \in K$, and $-1$ is not a sum of two squares in $K(\sqrt{5})$. Then $M(K) = 8$.
\end{propos}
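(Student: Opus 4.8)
The plan is to evaluate the two ingredients of the displayed formula
\[
M(K) = \max\big(J(\Aut(\mathbb{P}_K^1 \times\mathbb{P}_K^1)),\ \max\limits_{[L:K] = 2}J(\PGL_2(L) \rtimes \mathbb{Z}/2\mathbb{Z})\big)
\]
separately, and to observe that the first term dominates while every term of the second kind is strictly smaller.

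First I would note that the hypothesis propagates downward. Since $K \subset K(\sqrt{5})$, any element that is a sum of two squares in $K$ is also a sum of two squares in $K(\sqrt{5})$; hence, because $-1$ is \emph{not} a sum of two squares in $K(\sqrt{5})$, it is not a sum of two squares in $K$ either. By part (3) of Theorem~\ref{theo:theo P^1 x P^1} this gives $J(\Aut(\mathbb{P}_K^1 \times \mathbb{P}_K^1)) = 8$, so in particular $M(K) \geqslant 8$.

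Next I would bound the remaining contributions $J(\PGL_2(L) \rtimes \mathbb{Z}/2\mathbb{Z})$ over all quadratic extensions $L \supset K$. The key claim is that $\PGL_2(L)$ never contains a subgroup isomorphic to $\mathfrak{A}_5$. To prove it I split into two cases according to whether $\sqrt{5} \in L$. If $\sqrt{5} \notin L$, then by part (3) of Proposition~\ref{prop:Beauville} the group $\PGL_2(L)$ contains no copy of $\mathfrak{A}_5$. If $\sqrt{5} \in L$, then since $\sqrt{5} \notin K$ and $[L:K] = 2$, the subfield $K(\sqrt{5}) \subseteq L$ is itself quadratic over $K$, which forces $L = K(\sqrt{5})$; by hypothesis $-1$ is not a sum of two squares in this field, so Proposition~\ref{prop:Beauville}(3) again excludes $\mathfrak{A}_5$. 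In either case $\PGL_2(L)$ has no subgroup isomorphic to $\mathfrak{A}_5$, and Lemma~\ref{lemma: sqrt(5) - a,b -} yields $J(\PGL_2(L) \rtimes \mathbb{Z}/2\mathbb{Z}) \leqslant 6$ for every quadratic extension $L \supset K$.

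Finally I would combine the two estimates: substituting them into the formula for $M(K)$ gives $M(K) = \max(8, \leqslant\! 6) = 8$. I expect no serious obstacle here; the only point that requires a moment of care is the observation that any quadratic extension of $K$ containing $\sqrt{5}$ must coincide with $K(\sqrt{5})$, since this is precisely what allows the single hypothesis on $K(\sqrt{5})$ to control all relevant quadratic extensions at once.
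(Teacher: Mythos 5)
Your proof is correct and takes essentially the same approach as the paper: deduce $J(\Aut(\mathbb{P}^1_K\times\mathbb{P}^1_K))=8$ from Theorem~\ref{theo:theo P^1 x P^1}, rule out $\mathfrak{A}_5\subset\PGL_2(L)$ for every quadratic extension $L\supset K$ via Proposition~\ref{prop:Beauville}, and conclude with Lemma~\ref{lemma: sqrt(5) - a,b -}. Your only addition is to make explicit the case analysis (if $\sqrt{5}\in L$ then $L=K(\sqrt{5})$ by degree count) that the paper leaves implicit, which is a welcome clarification but not a different argument.
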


\begin{proof}
Since $-1$ is not a sum of two squares in $K(\sqrt{5})$, then, in particular, $-1$ is not the sum of two squares in $K$, so by Theorem~\ref{theo:theo P^1 x P^1}, $$J(\Aut(\mathbb{P}_K^1\times\mathbb{P}_K^1)) = 8.$$ Also, since $\sqrt{5}\not\in K$, and $-1$ is not a sum of two squares in $K(\sqrt{5})$, then from Proposition~\ref{prop:Beauville} it follows that for any quadratic extension $L\supset K$, the group~$\PGL_2(L)$ does not contain a subgroup isomorphic to $\mathfrak{A}_5$. Applying Lemma~\ref{lemma: sqrt(5) - a,b -}, we obtain the inequality for any quadratic field extension~$L\supset K$: $$J(\PGL_2(L)\rtimes\mathbb{Z}/2\mathbb{Z}) \leqslant 6.$$ Therefore, $M(K) = 8$. \end{proof}

Now everything is prepared for proving the main results.

\begin{proof}[Proof of Theorem~\ref{theo:M(K)}.] 
If $\sqrt{5}\in K$, and $-1$ is the a of two squares in $K$, then~$M(K)=7200$ according to Proposition~\ref{prop: M(K) = 7200, (i), (ii)}. If $\sqrt{5}\in K$, but $-1$ is not a sum of two squares in $K$, then $M(K) = 60$ according to Proposition~\ref{prop: M(K) = 60, (i), not (ii)}. If $\sqrt{5}\not\in K$, and~$-1$ is a sum of two squares in $K(\sqrt{5})$, then $M(K) = 120$ according to Proposition~\ref{prop: M(K) = 120, not (i), but (iii)}. And finally, if $\sqrt{5}\not\in K$, and $-1$ is not a sum of two squares in $K(\sqrt{5})$, then~$M(K)= 8$ according to Proposition~\ref{prop: M(K) = 8, not (i), not (iii)}. Thus, we have considered all possible fields of characteristic zero, and the theorem is proved. \end{proof}

\begin{proof}[Proof of Proposition~\ref{prop: J(AutS) <= 120 if rkPic = 1}] From Corollary~\ref{sled: P1 x P1 or R}, we have the isomorphism $$S\simeq R_{L/K}(\mathbb{P}^1_L),$$ where~$K\subset L$ is a quadratic extension, and $$\Aut(S) \simeq \PGL_2(L) \rtimes \mathbb{Z}/2\mathbb{Z}.$$ By Lemma~\ref{lemma: |G| < 120}, we get an estimate $J(\Aut(S)) \leqslant 120$. \end{proof}

\begin{proof}[Proof of Corollary~\ref{sled: M(Q), M(R), M(C)...}.] Theorem~\ref{theo:M(K)} obviously implies that $$M(\mathbb{Q}) = 8, \; M(\mathbb{R}) = 60, \; M(\mathbb{C}) = 7200. $$

Consider the field $\mathbb{Q}(\sqrt{-7})$. Firstly, note that $\sqrt{5}\not \in\mathbb{Q}(\sqrt{-7})$. Secondly, note that $-1$ is a sum of two squares in the field $\mathbb{Q}(\sqrt{-7},\sqrt{5})$, for example: $$\Bigl(\frac{\sqrt{-7} +\sqrt{5}}{-1 + \sqrt{-35}}\Bigl)^2 + \Bigl(\frac{6}{-1 + \sqrt{-35}}\Bigl)^2 = -1.$$
Thus, by Theorem~\ref{theo:M(K)}, we have $M(\mathbb{Q}(\sqrt{-7})) = 120$. 

Consider the field $\mathbb{Q}(i)$. Note that $\sqrt{5}$ does not lie in $\mathbb{Q}(i)$ and that $-1$ is a sum of two squares in $\mathbb{Q}(i)$, for example~$-1 = i^2+0^2$. Therefore, by Theorem~\ref{theo:M(K)}, we have~\mbox{$M(\mathbb{Q}(i)) = 120$}.
\end{proof}

\end{document}